\newtheorem{theorem}{Theorem}[section]
\newtheorem{lem}[theorem]{Lemma}
\newtheorem{prop}[theorem]{Proposition}
\newtheorem{cor}[theorem]{Corollary}
\theoremstyle{definition}
\newtheorem{defn}[theorem]{Definition}
\newtheorem{ex}[theorem]{Example}
\newtheorem{example}[theorem]{Example}
\numberwithin{equation}{section}
\def\z*{\mbox{Z$^*$}}
\def\s*{\mbox{(S$^*$)}}
\renewenvironment{proof}{\par\noindent{\bf Proof \,}}{$\hfill \Box$\par\bigskip}
\begin{document}
\title{The dual notion of strong irreducibility}
\author{Jawad Abuhlail}
\address{Department of Mathematics and Statistics\\
Box $\#$ 5046, KFUPM, Dhahran, KSA}
\email{abuhlail@kfupm.edu.sa}
\urladdr{http://faculty.kfupm.edu.sa/math/abuhlail/}
\author{Christian Lomp}
\curraddr{Center of Mathematics\\
University of Porto, Portugal}
\email{clomp@fc.up.pt}
\urladdr{http://www.fc.up.pt/pessoas/clomp}
\date{\today }
\subjclass[2010]{Primary 16D10; Secondary 13C05, 13C13, 54B99}
\keywords{Strongly irreducible ideals, strongly irreducible submodules,
strongly hollow submodule, Zariski Topology, Dual Zariski Topology}
\thanks{The first author would like to acknowledge the support provided by
the Deanship of Scientific Research (DSR) at King Fahd University of
Petroleum $\&$ Minerals (KFUPM) for funding this work through project
SB101023. Compiling this manuscript started during the visit of the second
author to KFUPM. He would like to thank KFUPM for their hospitality. The
second author was partially supported by Centro de Matem\'{a}tica da
Universidade do Porto (CMUP), financed by FCT (Portugal) through the
programs POCTI (Programa Operacional Ci\^{e}ncia, Tecnologia, Inova\c{c}\~{a}%
o) and POSI (Programa Operacional Sociedade da Informa\c{c}\~{a}o), with
national and European community structural funds.}

\begin{abstract}
This note gives a unifying characterization and exposition of strongly
irreducible elements and their duals in lattices. The interest in the study
of strong irreducibility stems from commutative ring theory, while the dual
concept of strong irreducibility had been used to define Zariski-like
topologies on specific lattices of submodules of a given module over an
associative ring. Based on our lattice theoretical approach, we give a
unifying treatment of strong irreducibility, dualize results on strongly
irreducible submodules, examine its behavior under central localization and
apply our theory to the frame of hereditary torsion theories.
\end{abstract}

\maketitle

\section{Irreducibility in semilattices}

\subsection{Introduction}

A \emph{lower semilattice} $(L,\wedge )$ is a partially ordered set $L$ such
that for any two elements $a,b\in L$ there exists a greatest lower bound $%
a\wedge b$. An \emph{upper semilattice} $(L,\vee )$ is defined analogously
asking that any two elements have a least upper bound $a\vee b$. For any $%
a,b\in L$ we set the interval of $a$ and $b$ to be the subset 
\begin{equation*}
\lbrack a,b]=\{x\in L\mid a\leq x\leq b\}.
\end{equation*}

\begin{defn}
Let $\mathcal{L}=(L,\wedge )$ be a lower semilattice. An element $p\in L$ is
called \textit{irreducible} if for any $a,b\in L$ with $p\leq a,b$: 
\begin{equation}
a\wedge b\leq p\qquad \Rightarrow \qquad a\leq p\mbox{ or }b\leq p.
\label{equation_1}
\end{equation}%
The element $p$ is called \textit{strongly irreducible} if Equation (\ref%
{equation_1}) holds for any $a,b\in L$.
\end{defn}

Strongly irreducible ideals and submodules have been studied in %
\cites{Atani, Azizi,HeinzerRatliffRush,KhaksariErshadSharif}. The dual
notion of a strongly irreducible submodule was termed \emph{strongly hollow}
in \cite{Abuhlail_Zariski2} and our purpose is to use lattice theory to
obtain unifying results on strongly irreducible elements either in the
lattice of one-sided or two-sided ideals, submodules or in the dual lattices
of those. We will also apply our results to the lattice of hereditary
torsion theory. Note that strongly irreducible elements are called \textit{%
prime elements} in \cite{Markowsky}. The reader might be warned that the
term \textit{irreducible element} is usually used for a meet- or
join-irreducible element in lattice theory (see \cite[p. 102]{Gratzer}): an
element $p$ of a lattice $\mathcal{L}=(L,\wedge ,\vee ,0,1)$ is called
meet-irreducible if $p\neq 1$ and whenever $p=a\wedge b$ for elements $%
a,b\in L$ then $p=a$ or $p=b$. Albu and Smith call a submodule of a module 
\textit{irreducible} if it is meet-irreducible in the lattice of submodules
(see \cite{AlbuSmith}). A join-irreducible element in $\mathcal{L}$ is a
meet-irreducible element in the dual lattice $\mathcal{L}^{\circ }$.

Prime ideals in a ring $R$ are strongly irreducible elements in the lattice
of ideals of $R$. This property allows that the basis of Zariski-closed
subsets of $\mathrm{Spec}(R)$ satisfy the axioms of a topology.

\begin{example}
\label{zariski1} Let $\mathcal{L}=(L,\wedge ,\vee ,0,1)$ be a complete
lattice and let $X\subseteq L\setminus \{1\}$ be a non-empty set of strongly
irreducible elements. Define for all $a\in L$ 
\begin{equation*}
\mathcal{V}(a)=\{p\in X\mid a\leq p\}.
\end{equation*}%
Then $\{\mathcal{V}(a)\mid a\in L\}$ is a basis of closed sets of a topology
on $X$, because for $a,b\in L$ certainly $\mathcal{V}(a)\cup \mathcal{V}%
(b)\subseteq \mathcal{V}(a\wedge b)$. If $p\in \mathcal{V}(a\wedge b)$, then 
$a\wedge b\leq p$ and as $p$ is strongly irreducible, $a\leq p$ or $b\leq p$%
, \emph{i.e.} $p\in \mathcal{V}(a)\cup \mathcal{V}(b)$. Hence $\mathcal{V}%
(a)\cup \mathcal{V}(b)=\mathcal{V}(a\wedge b).$ It is clear that $%
\bigcap_{a\in A}\mathcal{V}(a)=\mathcal{V}(\bigvee A)$ for any $A\subseteq L$
and that $\mathcal{V}(0)=X$ while $\mathcal{V}(1)=\emptyset $. Of course
this is the prototype of the Zariski topology for $\mathcal{L}$ being the
lattice of ideals of a commutative ring $R$ and $X=\mathrm{Spec}(R)$ being
the set of prime ideals. The same construction was used in \cite[Sec. 4]%
{Azizi} to topologize the space of strongly irreducible ideals of a
commutative ring and in \cite{Simmons} to topologize the space of
irreducible hereditary torsion theories over a ring.
\end{example}

\begin{example}
\label{zariski2} Let $\mathcal{L}=(L,\wedge ,\vee ,0,1)$ be a complete
lattice and let $X\subseteq L\setminus \{0\}$ be a non-empty set of elements
that are strongly irreducible in the dual lattice $\mathcal{L}^{\circ }$.
Hence an element $p\in X$ satisfies for all $a,b\in L$: 
\begin{equation}
p\leq a\vee b\qquad \Rightarrow \qquad p\leq a\mbox{ or }p\leq b
\end{equation}%
Define for all $a\in L$ 
\begin{equation*}
\mathcal{\chi }(a)=\{p\in X\mid p\not\leq a\}.
\end{equation*}%
Then $\{\mathcal{\chi }(a)\mid a\in L\}$ is a basis of open sets of a
topology in $X$ where for $a,b\in L$ and $p\in \mathcal{\chi }(a)\cap 
\mathcal{\chi }(b)$ one has $p\not\leq a\vee b$ as $p$ is strongly
irreducible in $\mathcal{L}^{\circ }$. Thus $\mathcal{\chi }(a)\cap \mathcal{%
\chi }(b)=\mathcal{\chi }(a\vee b)$. It is clear that $\bigcup_{a\in A}%
\mathcal{\chi }(a)=\mathcal{\chi }(\bigwedge A)$ for any $A\subseteq L$ and
that $\mathcal{\chi }(1)=\emptyset $ while $\mathcal{\chi }(0)=X$. This
topology has been used in \cite{Abuhlail_DualZariski} to define a
Zariski-like topology on the spectrum of second submodules of a non-zero
module over an associative ring.
\end{example}

\subsection{Properties of irreducible elements}

In what follows we will show some general properties of strongly irreducible
elements in lattices that will be applied later to the case of lattices of
submodules of a given module over an associative ring. Recall that a \textit{%
waist} (or \textit{node}) in a partially ordered set is an element that is
comparable to any other element.

\begin{lem}
\label{lemma1} Let $L$ be a lower semilattice and $a\leq p \leq b$ elements
in $L$.

\begin{enumerate}
\item If $p$ is (strongly) irreducible in $L$, then $p$ is also (strongly)
irreducible in \newline
$\{ x\in L \mid x\leq b\}$ and in $\{ x\in L \mid a\leq x\}$.

\item If $p$ is strongly irreducible in $L$, then it is also irreducible.

\item If $p$ is irreducible in $L$ and a waist in $L$ then $p$ is strongly
irreducible in $L$.
\end{enumerate}
\end{lem}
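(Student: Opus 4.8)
The plan is to treat the three parts separately, spending most of the effort on part~(1) and reserving the one genuine idea for part~(3).

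For part~(1), I would first record the key structural observation: both $\{x\in L\mid x\leq b\}$ and $\{x\in L\mid a\leq x\}$ are themselves lower semilattices, and in each the meet coincides with the meet computed in $L$. Indeed, if $u,v\leq b$ then $u\wedge v\leq u\leq b$, so the $L$-meet stays inside the down-set; and if $a\leq u,v$ then $a\leq u\wedge v$, so the $L$-meet stays inside the up-set. Once this compatibility of meets is in hand, both forms of irreducibility transfer immediately: if $p$ is strongly irreducible in $L$ and $u,v$ lie in one of the two subsets with $u\wedge v\leq p$, then the meet being the same as in $L$ lets strong irreducibility in $L$ conclude $u\leq p$ or $v\leq p$, and the plain irreducible case is identical, merely carrying along the extra hypothesis $p\leq u,v$. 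I would also remark that $p$ belongs to both subsets, since $a\leq p\leq b$, so the statement is not vacuous.

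Part~(2) is immediate from the definitions: irreducibility is precisely the restriction of the defining implication of strong irreducibility to those pairs $a,b$ with $p\leq a,b$, so a strongly irreducible element is in particular irreducible, with nothing further to check.

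Part~(3) is the only place requiring an actual argument rather than an unwinding of definitions. Assuming $p$ is irreducible and a waist, I would take arbitrary $a,b\in L$ with $a\wedge b\leq p$ and split into cases according to the comparabilities guaranteed by the waist property. If $a\leq p$ or $b\leq p$ we are already done. Otherwise, since $p$ is comparable to each of $a$ and $b$, the only remaining case is $p\leq a$ and $p\leq b$; but then the hypotheses of plain irreducibility are met, so irreducibility applied to $a\wedge b\leq p$ yields $a\leq p$ or $b\leq p$. Hence the implication holds for all $a,b$, i.e.\ $p$ is strongly irreducible. None of the three parts presents a serious obstacle: the only point demanding attention is the compatibility of meets in part~(1), and the only step needing a small idea is the case split on the waist property in part~(3).
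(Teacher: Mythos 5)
Your proposal is correct and follows essentially the same route as the paper: parts (1) and (2) are the same definition-unwinding the paper dismisses as clear (your observation that meets in the down-set and up-set agree with meets in $L$ is exactly the point), and your case split in part (3) is the contrapositive form of the paper's argument, which uses the waist property to force $p\leq a$ and $p\leq b$ whenever $a\not\leq p$ and $b\not\leq p$ and then invokes plain irreducibility.
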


\begin{proof}
(1) and (2) are clear; (3) Suppose that $p$ is irreducible and a waist. For
any $a,b\in L$ with $a\not\leq p$ and $b\not\leq p$, we have $p\leq a$ and $%
p\leq b$ since $p$ is a waist. As $p$ is irreducible, $a\wedge b\not\leq p.$
\end{proof}

\subsection{Prime elements}

A partially ordered set $(L,\leq )$ is called a \emph{partially ordered
groupoid} if there exists an associative binary operation $\ast :L\times
L\rightarrow L$ such that for all $a,b,c\in L$: $a\leq b$ implies $a\ast
c\leq b\ast c$ and $c\ast a\leq c\ast b$. An element $p\in L$ is called a 
\emph{prime element} if for all $a,b\in L$: $a\ast b\leq p\Rightarrow a\leq
p $ or $b\leq p$.

\begin{lem}
\label{prime_elements} Let $(L,\leq,\ast)$ be a partially ordered groupoid
such that $(L,\leq)$ is a lower semilattice. If $a \ast b \leq a\wedge b$
for all $a,b\in L$, then any prime element in $(L,\ast)$ is a strongly
irreducible element in $(L,\wedge)$.
\end{lem}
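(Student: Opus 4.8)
The plan is to reduce the strong-irreducibility condition for $\wedge$ directly to the primeness condition for $\ast$, using the structural hypothesis $a\ast b\leq a\wedge b$ as the bridge between the two operations. First I would fix an element $p$ that is prime in $(L,\ast)$ and take arbitrary $a,b\in L$ satisfying the antecedent $a\wedge b\leq p$ of strong irreducibility; the goal is then to derive that $a\leq p$ or $b\leq p$.

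The key observation is that the standing assumption $a\ast b\leq a\wedge b$, combined with $a\wedge b\leq p$, yields $a\ast b\leq p$ by transitivity of the order. At this point the hypothesis has been translated from a statement about the meet into a statement about the groupoid product, which is exactly the form to which primeness applies. Invoking that $p$ is prime, the relation $a\ast b\leq p$ forces $a\leq p$ or $b\leq p$, and this is precisely the conclusion required for $p$ to be strongly irreducible in $(L,\wedge)$.

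Since the argument is a two-step chain, namely one application of transitivity followed by one application of the definition of prime, I do not anticipate a genuine obstacle. The only point requiring care is that strong irreducibility, unlike plain irreducibility, imposes no restriction of the form $p\leq a,b$ on the elements $a,b$, so the implication must be verified for all $a,b\in L$; but this causes no difficulty, since primeness is itself quantified over all $a,b\in L$. In particular I would not need the associativity of $\ast$ nor the order-compatibility clauses built into the notion of a partially ordered groupoid; the proof uses only the single inequality $a\ast b\leq a\wedge b$ together with the transitivity of $\leq$.
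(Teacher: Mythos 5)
Your proof is correct and is precisely the one-line argument the paper leaves implicit (the lemma is stated without proof): from $a\wedge b\leq p$ one gets $a\ast b\leq a\wedge b\leq p$ by the standing hypothesis and transitivity, and primeness of $p$ then yields $a\leq p$ or $b\leq p$. Your observation that neither associativity nor the monotonicity clauses of the partially ordered groupoid are actually used is also accurate.
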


\begin{example}
In \cite{Bican}, Bican et al. equipped the lattice $\mathcal{L}_{2}(M)$ of
fully invariant submodules of a module $M$ over a ring $R$ with the
structure of a partially ordered groupoid that satisfies the condition of
the lemma: For any $N,K\in \mathcal{L}_{2}(M)$ set 
\begin{equation*}
N\star _{M}K:=\sum \left\{ (N)f\mid f\in \mathrm{Hom}_{R}(M,K)\right\}
\subseteq N\cap K.
\end{equation*}%
Hence prime elements of $\mathcal{L}_{2}(M)$ are strongly irreducible and
the Zariski-like topology of such submodules defined as in Example \ref%
{zariski1} has been considered in \cite{Abuhlail_Zariski1}.

A multiplication module over a ring $R$ is a module $M$ such that any
submodule is of the form $IM$ for some ideal $I$ of $R$. In particular, any
submodule of a multiplication module is fully invariant. If $R$ is
commutative, then any multiplication module is a self-generator module, 
\emph{i.e.} $M\star _{M}K=M$ for any submodule $K$ of $M$. It has been shown
in \cite[3.2]{Lomp} that for any multiplication module which is a
self-generator module the $\star _{M}$-product is given as follows: 
\begin{equation*}
(IM)\star _{M}(JM)=(IJ)M.
\end{equation*}%
Thus prime submodules of such $M$ must have the form $PM$ with $P$ an ideal
of $R$ such that $\overline{P}$ is a prime ideal of $\overline{R}$ where $%
\overline{\phantom{x}}:R\rightarrow R/\mathrm{Ann}_{R}(M)$ is the canonical
projection.
\end{example}

\begin{example}
Let $M$ be a module over an associative ring $R.$ A dual operation on $%
\mathcal{L}_{2}(M)$ has been defined by Bican et al. \cite{Bican} equipping
the dual lattice $\mathcal{L}_{2}(M)^{\circ }$ with the structure of a
partially ordered groupoid that satisfies the condition of the lemma: For
any $N,K\in \mathcal{L}_{2}(M)$ set 
\begin{equation*}
N\ {\square _{M}}\ K:=\bigcap \left\{ (N)f^{-1}\mid f\in \mathrm{Hom}%
_{R}(M/K,M)\right\} \supseteq N+K.
\end{equation*}%
Hence prime elements of $\mathcal{L}_{2}(M)^{\circ }$ are strongly
irreducible. The Zariski-like topology considered in \cite%
{Abuhlail_DualZariski} on the set of the so-called \emph{second submodules}
of $M$ coincides with the topology defined in Example \ref{zariski2}.

A comultiplication module over a ring $R$ is a module $M$ such that any
submodule is of the form $\mathrm{Ann}_{M}(I)$ for some ideal $I$ of $R$.
Clearly any submodule of a comultiplication module is fully invariant. It
can be shown that for any comultiplication module which is a
self-cogenerator module, \emph{i.e.} $0\ {\square _{M}}\ K=K$ for any $%
K\subseteq M$, the $\square _{M}$-product is given as follows: 
\begin{equation*}
\mathrm{Ann}_{M}(I)\ {\square _{M}}\ \mathrm{Ann}_{M}(J)=\mathrm{Ann}%
_{M}(JI).
\end{equation*}%
Yassemi's dual prime submodules provide another source of submodules of a
given module $M$ being strongly irreducible in the dual lattice of $\mathcal{%
L}(M)$ (see \cite{Yassemi}).
\end{example}

\subsection{Total orderings}

The lattice of ideals of chain rings and the lattice of subcomodules of
unserial coalgebras are examples of lattices whose ordering is total. This
total ordering is synonymous to the condition that every element is strongly
irreducible.

\begin{lem}
Let $(L,\leq)$ be a lower semilattice and $a,b\in L$. Then $a\wedge b$ is
strongly irreducible in $L$ if and only if $a\leq b$ and $a$ is strongly
irreducible in $L$ or $b\leq a$ and $b$ is strongly irreducible in $L$
\end{lem}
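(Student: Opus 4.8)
The plan is to prove both implications of the biconditional using nothing more than the definition of strong irreducibility and the fact that $a\wedge b$ is a greatest lower bound. The observation that organizes the whole argument is that $a\wedge b\leq a$ and $a\wedge b\leq b$ hold automatically, so that whenever $a\leq a\wedge b$ we are forced to conclude $a=a\wedge b$, and dually for $b$.

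For the backward implication I would argue directly and by symmetry. Suppose that $a\leq b$ and that $a$ is strongly irreducible; the remaining case is obtained by interchanging $a$ and $b$, since $a\wedge b=b\wedge a$ and the two disjuncts are swapped under this interchange. From $a\leq b$ we get $a\wedge b=a$, so the assumed strong irreducibility of $a$ is literally the strong irreducibility of $a\wedge b$, and there is nothing further to check.

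For the forward implication I would set $p:=a\wedge b$ and assume $p$ is strongly irreducible. Feeding the pair $(a,b)$ into the defining condition (\ref{equation_1}) — which is legitimate because $a\wedge b\leq p$ holds trivially, as $p\leq p$ — yields $a\leq p$ or $b\leq p$. In the first case, combining $a\leq p=a\wedge b$ with the standing inequality $a\wedge b\leq a$ forces $a=a\wedge b$; hence $a\leq b$, and $a=p$ is strongly irreducible, which is exactly the first disjunct. The second case produces the second disjunct in the identical manner.

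I do not expect a genuine obstacle here: the result is an immediate consequence of the definition once one notices that testing strong irreducibility of $a\wedge b$ on its own two ``factors'' $a$ and $b$ collapses the meet onto one of them. The only point deserving a moment's care is the bookkeeping trick of invoking the trivial inequality $p\leq p$ so as to apply (\ref{equation_1}) to the pair $(a,b)$; everything else is routine manipulation of the order relation.
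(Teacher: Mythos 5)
Your proof is correct and follows essentially the same route as the paper: apply the defining condition of strong irreducibility to the pair $(a,b)$ itself (using $a\wedge b\leq a\wedge b$), conclude $a\leq a\wedge b$ or $b\leq a\wedge b$, and hence $a=a\wedge b$ or $b=a\wedge b$, with the converse being immediate. Nothing to add.
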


\begin{proof}
If $c=a\wedge b$ is strongly irreducible, then $a\leq c$ or $b\leq c$. This
shows $a\leq b$ or $b\leq a$. Thus $a=c$ or $b=c$. The converse is trivial.
\end{proof}

\begin{cor}
\label{total} Every element of a lower semilattice $(L,\leq )$ is strongly
irreducible if and only if $\leq$ is a total ordering.
\end{cor}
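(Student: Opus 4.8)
The plan is to treat this as a direct corollary of the preceding lemma, which already characterizes when a meet $a\wedge b$ is strongly irreducible. I would prove the two implications separately: the forward direction (every element strongly irreducible $\Rightarrow$ total ordering) will lean on that lemma, while the converse (total ordering $\Rightarrow$ every element strongly irreducible) is cleanest straight from the definition of strong irreducibility.

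For the forward direction, I would assume every element of $L$ is strongly irreducible and take arbitrary $a,b\in L$. Since $a\wedge b$ is by hypothesis strongly irreducible, the preceding lemma applies and yields that $a\leq b$ (with $a$ strongly irreducible) or $b\leq a$ (with $b$ strongly irreducible). In either case $a$ and $b$ are comparable, and since $a,b$ were arbitrary this shows $\leq$ is a total ordering. The strong-irreducibility clauses attached to $a$ and $b$ in the lemma are not needed here; only the comparability conclusion is used.

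For the converse, I would assume $\leq$ is total and fix an arbitrary $p\in L$. Given any $a,b\in L$ with $a\wedge b\leq p$, totality forces $a\leq b$ or $b\leq a$, so that $a\wedge b$ coincides with the smaller of the two, i.e. $a\wedge b=a$ or $a\wedge b=b$. Hence $a\wedge b\leq p$ gives directly $a\leq p$ or $b\leq p$, which is exactly the defining condition \eqref{equation_1} for $p$ to be strongly irreducible. As $p$ was arbitrary, every element is strongly irreducible.

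I do not expect a genuine obstacle here: both directions reduce to the single observation that in a total order $a\wedge b\in\{a,b\}$, together with one invocation of the preceding lemma. The only point requiring a little care is bookkeeping in the forward direction, namely to confirm that applying the lemma to the single element $c=a\wedge b$ (rather than to a fixed strongly irreducible $p$) legitimately delivers the comparability of $a$ and $b$; this is immediate once one notes that the hypothesis makes \emph{every} meet strongly irreducible.
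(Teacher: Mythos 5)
Your proof is correct and follows essentially the same route as the paper: the forward direction extracts comparability of $a$ and $b$ from the strong irreducibility of $a\wedge b$ (you cite the preceding lemma, while the paper inlines that one-line argument), and the converse is the observation that in a total order $a\wedge b\in\{a,b\}$, which the paper dismisses as clear. No gaps.
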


\begin{proof}
Let $a$ and $b$ be elements of $L$ and set $c=a\wedge b$. If $a\not\leq b,$
then $a\not\leq c$. Since $a\wedge b=c$ and $c$ strongly irreducible, we
have $b\leq c$, i.e. $b\leq a$. This shows that $\leq $ is a total ordering.
The converse is clear.
\end{proof}

\subsection{Irreducible elements in complete lattices}

A lower semilattice $\mathcal{L}=(L,\wedge )$ resp. an upper semilattice $%
\mathcal{L}=(L,\vee )$ is \textit{complete} if arbitrary meets $\bigwedge A$
resp. joins $\bigvee A$ exist for subsets $A\subseteq L$. One defines then $%
0=\bigwedge L$ resp. $1=\bigvee L$. Note that a lower semilattice $(L,\wedge
)$ with a smallest element $0$ is called \textit{uniform} if $0$ is
irreducible in $L$. Any complete lower semilattice $\mathcal{L}=(L,\wedge )$
can be made into a lattice by setting $a\vee b=\bigwedge \{c\in L\mid a\leq c%
\mbox{and }b\leq c\}$ for all $a,b\in L$.

One says that an element $p$ of a complete lattice $\mathcal{L}=(L,\wedge
,\vee ,0,1)$ is weakly $\wedge $-distributive if whenever $x\wedge y=0$, one
has $p=(x\vee p)\wedge (y\vee p)$. Similarly an element $p$ is weakly $\vee $%
-distributive if whenever $x\vee y=1$, then $p=(x\wedge p)\vee (y\wedge p)$.

The first lemma shows that strongly irreducible elements are always weakly $%
\wedge $-distributive.

\begin{lem}
\label{weakdistributive} Let $\mathcal{L}=(L,\wedge,\vee,0,1)$ be a complete
lattice. If $p$ is strongly irreducible in $L$, then it is weakly $\wedge$%
-distributive in $L$.
\end{lem}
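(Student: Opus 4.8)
The plan is to establish the two inequalities that together give the claimed equality $p=(x\vee p)\wedge(y\vee p)$ for every pair $x,y\in L$ with $x\wedge y=0$. One direction is free: since $p\leq x\vee p$ and $p\leq y\vee p$, the greatest lower bound of these two elements satisfies $p\leq (x\vee p)\wedge(y\vee p)$, and this uses nothing beyond the order structure of a complete lattice.

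The substance lies in the reverse inequality $(x\vee p)\wedge(y\vee p)\leq p$. The natural temptation is to expand the meet of the two joins by a distributive law, but $L$ is only assumed to be a complete lattice, not a distributive one, so that route is unavailable and must be avoided. Instead I would feed the hypothesis $x\wedge y=0$ directly into the definition of strong irreducibility: since $x\wedge y=0\leq p$ and $p$ is strongly irreducible, Equation (\ref{equation_1}) forces $x\leq p$ or $y\leq p$.

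From there the argument collapses. Assuming without loss of generality that $x\leq p$, one has $x\vee p=p$, whence $(x\vee p)\wedge(y\vee p)=p\wedge(y\vee p)=p$, the last equality holding because $p\leq y\vee p$. Combined with the free inequality of the first paragraph this yields $p=(x\vee p)\wedge(y\vee p)$, which is precisely weak $\wedge$-distributivity.

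The main obstacle is conceptual rather than computational: one must resist invoking distributivity and notice that strong irreducibility already resolves the product $x\wedge y$ into a comparison of $x$ or $y$ with $p$. Once that observation is made the rest is immediate, and it is worth noting that the argument only applies strong irreducibility at the single element $0$, which lies below $p$.
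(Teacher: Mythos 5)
Your proof is correct and follows essentially the same route as the paper's: apply strong irreducibility to $x\wedge y=0\leq p$ to get $x\leq p$ or $y\leq p$, hence $x\vee p=p$ or $y\vee p=p$, from which the equality $p=(x\vee p)\wedge(y\vee p)$ is immediate. The extra remarks about the trivial inequality and avoiding distributivity are fine but add nothing beyond the paper's one-line argument.
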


\begin{proof}
Let $x\wedge y=0\leq p.$ Then $x\leq p$ or $y\leq p,$ as $p$ is strongly
irreducible in $L,$ and so $x\vee p=p$ or $y\vee p=p$. Hence, $p=(x\vee
p)\wedge (y\vee p)$ which shows that $p$ is weakly distributive.
\end{proof}

In algebraic lattices the irreducibility of an element can be checked on the
set of compact elements. To prepare this result we have the following lemma:

\begin{lem}
\label{lemma1_alg} Let $\mathcal{L}=(L,\wedge ,\vee ,0,1)$ be a complete
lattice and $\mathcal{C}\subseteq L$. Assume that any element is equal to a
join of elements in $\mathcal{C}$. Then an element $p\in L$ is strongly
irreducible in $L$ if and only if Equation (\ref{equation_1}) holds for all
for all elements $a,b\in \mathcal{C}$.
\end{lem}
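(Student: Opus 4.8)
The plan is to dispose of the easy implication first and then concentrate on the converse, which carries all the content. The forward direction is immediate: if $p$ is strongly irreducible in $L$, then Equation (\ref{equation_1}) holds for \emph{all} $a,b\in L$ by definition, hence in particular for all $a,b\in\mathcal{C}$. So the work lies entirely in showing that checking the implication on the generating set $\mathcal{C}$ suffices to recover it on all of $L$.

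For the converse, suppose Equation (\ref{equation_1}) holds for all $a,b\in\mathcal{C}$, and take arbitrary $a,b\in L$. I would argue by contraposition: assuming $a\not\leq p$ and $b\not\leq p$, I want to conclude $a\wedge b\not\leq p$. By hypothesis every element is a join of elements of $\mathcal{C}$, so write $a=\bigvee_{i}a_{i}$ and $b=\bigvee_{j}b_{j}$ with all $a_{i},b_{j}\in\mathcal{C}$. The key observation is that a join lies below $p$ precisely when each joinand does, since $p$ is then an upper bound of the family and the join is the least such. Consequently $a\not\leq p$ forces some $a_{i_{0}}\not\leq p$, and likewise $b\not\leq p$ forces some $b_{j_{0}}\not\leq p$.

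Now $a_{i_{0}},b_{j_{0}}\in\mathcal{C}$ and neither lies below $p$, so the contrapositive of Equation (\ref{equation_1}) applied to these two elements of $\mathcal{C}$ yields $a_{i_{0}}\wedge b_{j_{0}}\not\leq p$. Since $a_{i_{0}}\leq a$ and $b_{j_{0}}\leq b$, monotonicity of the meet gives $a_{i_{0}}\wedge b_{j_{0}}\leq a\wedge b$; were $a\wedge b\leq p$ we would get $a_{i_{0}}\wedge b_{j_{0}}\leq p$, a contradiction. Hence $a\wedge b\not\leq p$, which is exactly the contrapositive of Equation (\ref{equation_1}) for the arbitrary pair $a,b\in L$, proving $p$ strongly irreducible.

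The only point requiring care, and thus the main (though minor) obstacle, is the extraction step: passing from $\bigvee_{i}a_{i}\not\leq p$ to a single witness $a_{i_{0}}\not\leq p$. This is where completeness of the lattice is genuinely used, via the least-upper-bound characterization of the join; everything else is the contrapositive bookkeeping and monotonicity of $\wedge$.
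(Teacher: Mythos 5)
Your proof is correct and takes essentially the same route as the paper: decompose $a$ and $b$ as joins of elements of $\mathcal{C}$ and extract witnesses in $\mathcal{C}$ not lying below $p$. The only difference is cosmetic — you argue the contrapositive symmetrically, applying the hypothesis once to the pair of witnesses $a_{i_0},b_{j_0}$ and then using monotonicity of $\wedge$, whereas the paper argues directly, fixing a single witness $c\not\leq p$ from the decomposition of $a$ and applying the hypothesis to $c\wedge d$ for every joinand $d$ of $b$ to conclude $b\leq p$.
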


\begin{proof}
Suppose that Equation (\ref{equation_1}) holds for all elements in $\mathcal{%
C}$. Let $a\wedge b\leq p$. By hypothesis $a=\bigvee C$ and $b=\bigvee D$
with $C,D\subseteq \mathcal{C}$. If $a\not\leq p$, then there exists $c\in C$
with $c\not\leq p$. For any element $d\in D$ we have $c\wedge d\leq a\wedge
b\leq p$ hence, by hypothesis, $d\leq p$ since $c\not\leq p$. Thus $%
b=\bigvee D\leq p$.
\end{proof}

\begin{example}
The lemma above applies in particular to complete algebraic lattices. Recall
that in a complete upper semilattice $(L,\vee ,1)$, an element $c\in L$ is
called \textit{compact} if whenever $c\leq \bigvee A$ for a subset $A$ of $L$%
, there exists a finite subset $A^{\prime }\subseteq A$ such that $c\leq
\bigvee A^{\prime }$. Furthermore, $L$ is called \textit{algebraic} if every
element of $L$ is the join of a set of compact elements (see \cite[I.3.16]%
{Gratzer}).
\end{example}

\begin{prop}
\label{chain} Let $\mathcal{L}=(L,\wedge ,\vee ,0,1)$ be a complete lattice
and $\mathcal{C}\subseteq \mathcal{L}$ a chain of strongly irreducible
elements in $L$. Then $p=\bigwedge C$ is a strongly irreducible element in $L
$.
\end{prop}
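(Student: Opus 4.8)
The plan is to verify the defining condition of strong irreducibility directly. So suppose $a,b\in L$ satisfy $a\wedge b\leq p$, and aim to conclude $a\leq p$ or $b\leq p$. The starting observation is that $p=\bigwedge\mathcal{C}$ is in particular a lower bound of $\mathcal{C}$, so $a\wedge b\leq p\leq q$ for every $q\in\mathcal{C}$. Since each $q$ is strongly irreducible in $L$, this immediately gives, for every single $q\in\mathcal{C}$, that $a\leq q$ or $b\leq q$. The genuine difficulty is that the branch of this dichotomy may a priori depend on $q$, whereas what I ultimately need, namely $a\leq p$ or $b\leq p$, is a single branch holding uniformly for all $q$ at once (recall $p$ is the meet, so $a\leq p$ is equivalent to $a\leq q$ for all $q\in\mathcal{C}$).

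To organize the argument I would cover the chain by the two sets on which each branch holds: put $A=\{q\in\mathcal{C}\mid a\leq q\}$ and $B=\{q\in\mathcal{C}\mid b\leq q\}$. The previous step says exactly that $A\cup B=\mathcal{C}$. The goal then reduces to showing $A=\mathcal{C}$ or $B=\mathcal{C}$, because $a\leq q$ for all $q\in\mathcal{C}$ forces $a$ to be a lower bound of $\mathcal{C}$ and hence $a\leq\bigwedge\mathcal{C}=p$, and symmetrically for $b$.

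The key step, and the only point where the chain hypothesis is used, is to exclude the case in which both $A$ and $B$ are proper subsets. I would argue by contradiction: if $A\neq\mathcal{C}$ choose $q_1\in\mathcal{C}\setminus A$, and if $B\neq\mathcal{C}$ choose $q_2\in\mathcal{C}\setminus B$. Since $A\cup B=\mathcal{C}$, it follows that $q_1\in B$, i.e.\ $b\leq q_1$, and that $q_2\in A$, i.e.\ $a\leq q_2$. Now totality of $\mathcal{C}$ gives $q_1\leq q_2$ or $q_2\leq q_1$. In the first case $b\leq q_1\leq q_2$ contradicts $q_2\notin B$; in the second case $a\leq q_2\leq q_1$ contradicts $q_1\notin A$. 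Either way one reaches a contradiction, so $A=\mathcal{C}$ or $B=\mathcal{C}$, which finishes the proof.

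I expect this comparability step to be the crux of the argument: it is precisely what upgrades the pointwise dichotomy into the required uniform one. Everything else is the routine translation between ``$a\leq q$ for all $q\in\mathcal{C}$'' and ``$a\leq\bigwedge\mathcal{C}$'', valid because $\bigwedge\mathcal{C}$ is the greatest lower bound. It is worth noting that beyond the existence of the single meet $\bigwedge\mathcal{C}$ no further completeness is invoked, and strong irreducibility of the individual members of $\mathcal{C}$ enters only through the dichotomy established at the very start.
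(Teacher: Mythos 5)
Your proof is correct and takes essentially the same route as the paper's: both extract the pointwise dichotomy $a\leq q$ or $b\leq q$ from strong irreducibility of each member of the chain and then use comparability to make one branch hold uniformly. The only cosmetic difference is that the paper fixes a single $q$ with $a\not\leq q$ and shows $b$ lies below every element of the down-segment $C\cap[0,q]$, whose meet equals $\bigwedge C$, whereas you show outright that one of the two branches holds on all of $\mathcal{C}$.
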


\begin{proof}
Let $a,b\in L$ with $a\wedge b\leq p=\bigwedge C$. Then also $a\wedge b\leq q
$ for any $q\in C$. Suppose $a\not\leq p$ so that there exists $q\in C$ with 
$a\not\leq q$. In particular, for all $q^{\prime }\in C$ with $q^{\prime
}\leq q$ we also have $a\not\leq q^{\prime }$. Set $C^{\prime }:=C\cap
\lbrack 0,q]$. Since $C^{\prime }$ consists of strongly irreducible
elements, we have $b\leq q^{\prime }$ for all $q^{\prime }\in C^{\prime }$.
Since $C$ is a chain, $b\leq \bigwedge C^{\prime }=\bigwedge C=p$.
\end{proof}

We recover the fact noted in \cite[Theorem 2.1]{Azizi} that over every
proper ideal in a commutative ring lies a minimal strongly irreducible ideal.

\begin{cor}
\label{minimal} Let $\mathcal{L}=(L,\wedge ,\vee ,0,1)$ be a complete
lattice and $a\in L$. If $a$ is bounded from above by a strongly irreducible
element, then there exists a minimal strongly irreducible element $p$ in $L$
with $a\leq p$.
\end{cor}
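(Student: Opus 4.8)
The plan is to obtain $p$ as a minimal element of the poset
\[
S=\{\,q\in L\mid a\leq q\ \text{and}\ q\ \text{is strongly irreducible in }L\,\}
\]
ordered by the restriction of $\leq$, and to produce such a minimal element by a straightforward application of Zorn's Lemma to the \emph{reverse} order. First I would record that $S\neq\emptyset$: this is exactly the hypothesis that $a$ is bounded from above by some strongly irreducible element.

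The one verification that carries content is that every non-empty chain in $S$ admits a lower bound lying in $S$. So let $C\subseteq S$ be a non-empty chain and set $p=\bigwedge C$, which exists because $\mathcal{L}$ is complete. Since $C$ is a chain of strongly irreducible elements, Proposition \ref{chain} applies and tells us that $p=\bigwedge C$ is again strongly irreducible in $L$. Moreover $a\leq q$ for every $q\in C$, so $a$ is a lower bound of $C$ and therefore $a\leq\bigwedge C=p$. Hence $p\in S$, and by construction $p\leq q$ for all $q\in C$, so $p$ is a lower bound of $C$ inside $S$.

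To conclude, I would read the previous paragraph as the statement that every non-empty chain of the poset $(S,\geq)$ has an upper bound in $(S,\geq)$, namely its meet; since $S\neq\emptyset$, Zorn's Lemma furnishes a maximal element of $(S,\geq)$, that is, a minimal element $p$ of $(S,\leq)$. By definition of $S$ this $p$ is a strongly irreducible element with $a\leq p$ which is minimal among all strongly irreducible elements lying above $a$, as required. The essential step — and the only place where the hypotheses are really used — is the reduction to Proposition \ref{chain}, guaranteeing that the infimum of a chain of strongly irreducible elements is still strongly irreducible; completeness of $\mathcal{L}$ is needed solely to ensure that $\bigwedge C$ exists, and the remainder is the routine Zorn's Lemma packaging.
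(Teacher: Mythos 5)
Your proof is correct and follows the same route as the paper's: both establish that the set of strongly irreducible elements above $a$ is non-empty, use Proposition \ref{chain} to show chains have lower bounds given by their meets, and apply Zorn's Lemma to the reversed order. Yours simply spells out the details that the paper's two-sentence proof leaves implicit.
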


\begin{proof}
The hypothesis implies that $T=\{ p\in [a,1] \mid p 
\mbox{ is strongly
irreducible in } L \}$ is non-empty. Equipping $T$ with the opposite partial
ordering, Proposition \ref{chain} allows us to apply Zorn's Lemma to obtain
a minimal element in $T$.
\end{proof}

Given a non-empty subset $A$ of a complete lattice $(L,\wedge ,\vee ,0,1)$,
we set 
\begin{equation*}
\Omega (A)=\{x\in L\setminus \{0\}\mid a\wedge x=0\mbox{ for all }a\in A\}.
\end{equation*}%
For an element $p\in L$ we simply write $\Omega (p):=\Omega (\{p\})$. An
element $p$ is called \emph{\ essential in $L$} if $\Omega (p)=\emptyset $.
Note that if $L$ is uniform, then any nonzero element of $L$ is essential in 
$L$. Recall that in a \emph{pseudo-complement} of an element $a$ in a
semilattice $L$ is (if it exists) the greatest element $x$ such that $%
a\wedge x=0$. If $L$ is a complete lattice and $x$ is a pseudo-complement of 
$a$ in $L$, then $x=\bigvee \Omega (a)$.

\begin{lem}
\label{characterisation_str_irr} Let $L$ be a complete lattice $\mathcal{L}%
=(L,\wedge,\vee, 0,1 )$. The following statements are equivalent for an
element $p\in L$ with $\Omega(p)\neq \emptyset$:

\begin{enumerate}
\item[(a)] $p$ is strongly irreducible in $L$.

\item[(a')] $L\setminus [0,p]$ is closed under $\wedge$.

\item[(b)] $(\Omega(p),\wedge)$ is a semilattice and for any $a\not\in [0,p]$
there exists $y\in \Omega(p)$ with $y\leq a$.

\item[(c)] $p$ is a pseudo-complement of some element $q$ in $L$ such that $%
[0,q]$ is uniform.

\item[(d)] for any $q\in \Omega (p)$: $p$ is a pseudo-complement for $q$ in $%
L$ and $[0,q]$ is uniform.

\item[(e)] $p$ is irreducible and weakly $\wedge$-distributive in $L$.
\end{enumerate}
\end{lem}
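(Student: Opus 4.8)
The plan is to organize the six conditions around the central property (a) and prove everything by a short cycle, treating (a$'$) and (b) as satellites. First I would dispose of (a)$\Leftrightarrow$(a$'$): since $[0,p]=\{x\mid x\le p\}$, the defining implication of strong irreducibility is exactly the contrapositive assertion that $a,b\notin[0,p]$ forces $a\wedge b\notin[0,p]$, i.e.\ that $L\setminus[0,p]$ is closed under $\wedge$. The implication (a)$\Rightarrow$(e) is already available for free: Lemma \ref{lemma1}(2) gives irreducibility and Lemma \ref{weakdistributive} gives weak $\wedge$-distributivity. It then remains to run a cycle through (e), (d), (c) back to (a), and to attach (b).

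The heart of the argument is the reverse passage (e)$\Rightarrow$(d), and this is where the standing hypothesis $\Omega(p)\neq\emptyset$ enters. Fixing any $q\in\Omega(p)$ (so $q\neq 0$ and $p\wedge q=0$), I would establish two facts. First, $[0,q]$ is uniform: if $a,b\le q$ with $a\wedge b=0$, weak $\wedge$-distributivity yields $p=(a\vee p)\wedge(b\vee p)$, and irreducibility, applied to the two elements $a\vee p,\,b\vee p\ge p$ whose meet equals $p$, forces $a\le p$ or $b\le p$; combined with $a,b\le q$ and $p\wedge q=0$ this gives $a=0$ or $b=0$. Second, $p$ is a pseudo-complement of $q$: if $x\wedge q=0$, then $p=(x\vee p)\wedge(q\vee p)$, and irreducibility forces $x\vee p\le p$ or $q\vee p\le p$; the latter would give $q\le p$, hence $q=p\wedge q=0$, a contradiction, so $x\le p$, showing $p$ is the greatest element meeting $q$ in $0$. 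Then (d)$\Rightarrow$(c) is immediate, choosing any $q\in\Omega(p)$ (which exists by hypothesis).

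To close the cycle I would prove (c)$\Rightarrow$(a). Given that $p$ is a pseudo-complement of some $q$ with $[0,q]$ uniform, suppose $a\not\le p$ and $b\not\le p$. Being a pseudo-complement of $q$ means precisely that $x\wedge q=0\Rightarrow x\le p$, so contrapositively $a\wedge q\neq 0$ and $b\wedge q\neq 0$; as both lie in $[0,q]$, uniformity gives $a\wedge b\wedge q=(a\wedge q)\wedge(b\wedge q)\neq 0$, whereas $a\wedge b\le p$ would force $a\wedge b\wedge q\le p\wedge q=0$. Hence $a\wedge b\not\le p$, which is (a). This yields (a)$\Leftrightarrow$(a$'$)$\Leftrightarrow$(c)$\Leftrightarrow$(d)$\Leftrightarrow$(e). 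Finally I would attach (b) by two short arguments: for (a)$\Rightarrow$(b), closedness of $\Omega(p)$ under $\wedge$ follows because $x,y\in\Omega(p)$ with $x\wedge y=0\le p$ forces (say) $x\le p$, whence $x=x\wedge p=0$, a contradiction, while for $a\not\le p$ one takes the available $q\in\Omega(p)$ and checks $y:=a\wedge q\in\Omega(p)$ (nonzero since $p$ is a pseudo-complement of $q$) with $y\le a$; and for (b)$\Rightarrow$(a), if $a,b\not\le p$ one picks $y\le a$, $z\le b$ in $\Omega(p)$, uses closedness to get $0\neq y\wedge z\in\Omega(p)$ with $y\wedge z\le a\wedge b$, and notes $a\wedge b\le p$ would force $y\wedge z=0$.

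I expect the main obstacle to be the step (e)$\Rightarrow$(d): the art is to feed weak $\wedge$-distributivity only pairs that meet in $0$ — here $x$ together with the fixed $q\in\Omega(p)$ — and to notice that irreducibility is exactly the tool that upgrades the resulting identity $p=(x\vee p)\wedge(q\vee p)$ into the comparison $x\le p$. This is precisely where $\Omega(p)\neq\emptyset$ is indispensable: without an element $q$ genuinely disjoint from $p$ there is nothing to pseudo-complement, and irreducibility together with weak $\wedge$-distributivity alone need not yield strong irreducibility.
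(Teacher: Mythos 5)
Your proof is correct; every step checks out, and the difference from the paper's argument lies in how the implication out of (e) is organized. The paper proves $(a)\Rightarrow(d)$ directly from strong irreducibility (noting $[0,q]\setminus\{0\}\subseteq\Omega(p)$ is closed under $\wedge$, and that every $b$ with $b\wedge q=0\le p$ must satisfy $b\le p$), and then closes a separate loop $(a)\Leftrightarrow(e)$ in which $(e)\Rightarrow(a)$ needs two successive applications of weak $\wedge$-distributivity: first to the pair $a$, $b\wedge q$ to force $b\wedge q=0$, then to the pair $b$, $q$ to force $b\le p$. You instead route $(e)\Rightarrow(d)\Rightarrow(c)\Rightarrow(a)$: each of your applications of weak distributivity pairs the fixed $q\in\Omega(p)$ (or two elements below $q$) with a single element meeting it at $0$, so the two-stage bootstrapping disappears and the uniformity of $[0,q]$ and the pseudo-complement property become two independent one-line checks. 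The cost is that $(a)\Rightarrow(d)$ is obtained only via the detour $(a)\Rightarrow(e)\Rightarrow(d)$, but since $(a)\Rightarrow(e)$ comes for free from Lemmas \ref{lemma1} and \ref{weakdistributive}, nothing is lost; your treatment of (a$'$) and (b) agrees with the paper's up to rephrasing. One cosmetic point: in $(a)\Rightarrow(b)$ you justify $a\wedge q\neq 0$ by appealing to the already-established equivalence with (d), which is legitimate given your ordering, but it also follows directly from (a), since $a\wedge q=0\le p$ would force $a\le p$ or $q\le p$, both excluded.
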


\begin{proof}
First note that $[0,p]\cap \Omega(p)=\emptyset$, because if $a\leq p$ then $%
a=a\wedge p$ and $a\in \Omega(p)$ would mean $a\wedge p=0$ and $a\neq 0$.

$(a)\Leftrightarrow (a^{\prime })$ If $p$ is strongly irreducible, then for
all $a,b\in L^{\prime }=L\setminus \lbrack 0,p]$ also $a\wedge b\not\leq p$, 
\emph{i.e.} $a\wedge b\in L^{\prime }$. On the other hand, since Equation (%
\ref{equation_1}) just needs to be checked for elements in $L^{\prime }$,
the strongly irreducibility of $p$ follows from $L^{\prime }$ being closed
under $\wedge $.

$(a)\Rightarrow (b)$ For any $a,b\in \Omega (p)$ we have $a,b\not\leq p$ and
therefore $a\wedge b\not\leq p$ and $a\wedge b\in \Omega (p)$, \emph{i.e.} $%
\Omega (p)$ is a subsemilattice of $L$. Fix any $x\in \Omega (p)$. For any $%
a\not\leq p$ we have also $a\wedge x\not\leq p$ since $x\not\leq p$ and $p$
strongly irreducible. Hence $0\neq a\wedge x=y\in \Omega (p)$.

$(b)\Rightarrow (a)$ for any $a,b\in L$ with $a,b\not\leq p$, there exist $%
x,y\in \Omega (p)$ with $x\leq a,y\leq b$. Since $\Omega (p)$ is closed
under $\wedge $, $0\neq x\wedge y\not\leq p$, \emph{i.e.} $a\wedge b\not\leq
p$.

$(a)\Rightarrow (d)$ Take any element $q\in \Omega (p)$. Since $%
[0,q]\setminus \{0\}\subseteq \Omega (p)$ is closed under $\wedge $, $[0,q]$
is uniform. Moreover, for any element $b\in \Omega (q)$ we have $q\wedge
b=0\leq p$, thus by the strongly irreducibility of $p$, $b\leq p$ which
means that $p$ is the greatest element among those in $\Omega (q)$ showing
that $p$ is the pseudo-complement of $q$ in $L$.

$(d)\Rightarrow (c)$ is trivial.

$(c)\Rightarrow (a)$ Suppose that $p$ is a pseudo-complement of $q$ in $L$
with $[0,q]$ being uniform. Then $[0,q]\setminus \{0\}\subseteq \Omega (p)$.
If $a\neq 0$ and $a\wedge q=0$, then $a\in \Omega (q)$ and hence $a\leq p$
as $p$ is the greatest element in $\Omega (q)$. Thus if $a,b\not\leq p$,
then $a\wedge q\neq 0\neq b\wedge q$. Since $[0,q]$ is uniform, one has $%
a\wedge b\wedge q\neq 0$. Since $a\wedge b\wedge q\in \Omega (p)$, we have $%
a\wedge b\wedge q\not\leq p$ and therefore $a\wedge b\not\leq p$, \emph{i.e.}
$p$ is strongly irreducible.

$(a)\Rightarrow (e)$ follows from the lemmas \ref{weakdistributive} and \ref%
{lemma1}.

$(e)\Rightarrow (a)$ Let $p$ be irreducible and weakly $\wedge $%
-distributive in $L$. Since $\Omega (p)\neq \emptyset $ we can choose an
element $q\in \Omega (p)$. Hence $p\wedge q=0$ and $q\neq 0$. Suppose that $%
a\wedge b\leq p$ for some $a,b\in L$. Then $a\wedge b\wedge q=0$. As $p$ is
weakly $\wedge $-distributive, $p=(a\vee p)\wedge ((b\wedge q)\vee p)$.
Suppose that $a\not\leq p$. As $p$ is irreducible and $a\wedge p\not\leq p$, 
$b\wedge q\vee p=p$, i.e $b\wedge q\leq p\wedge q=0$. Again using the weak $%
\wedge $-distributivity of $p$, we have $p=(b\vee p)\wedge (q\vee p)$. Since 
$p$ is irreducible and $q\not\leq p$, $b\vee p=p$, i.e. $b\leq p$.
\end{proof}

The following result describes strongly irreducible elements in general:

\begin{theorem}
\label{characterisation_str_irr_final} Let $L$ be a complete lattice. If $%
p\in L$ is strongly irreducible in $L,$ then $p$ is irreducible and

\begin{itemize}
\item there exists $p^{\prime }<p$ such that $p$ is a pseudo-complement of
some element $q\in [p^{\prime },-]$ with $[p^{\prime },q]$ being uniform or

\item $p$ is a waist.
\end{itemize}
\end{theorem}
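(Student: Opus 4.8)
The plan is to split the argument according to whether $p$ is a waist. In either case $p$ is irreducible, since strong irreducibility implies irreducibility by Lemma~\ref{lemma1}(2); so this part of the conclusion is free. If $p$ happens to be a waist we are immediately in the second alternative and there is nothing more to prove. Hence the entire difficulty is to produce the pseudo-complement description of the first alternative under the assumption that $p$ is \emph{not} a waist.

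So suppose $p$ is not a waist. Then there is an element $c\in L$ that is incomparable with $p$. The idea is to relativize to a suitable principal up-set rather than to work in all of $L$, where $\Omega(p)$ might be empty. I would set $p'=p\wedge c$. Since $p\not\leq c$ we have $p\wedge c\neq p$, so $p'<p$, which already produces the strict inequality $p'<p$ demanded by the first bullet. Now consider the interval $[p',1]=[p',-]$: as an interval of a complete lattice it is again a complete lattice, with least element $p'$, and by Lemma~\ref{lemma1}(1) the element $p$ remains strongly irreducible in $[p',1]$.

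The crucial verification is that the relativized $\Omega$ is nonempty, i.e. $\Omega_{[p',1]}(p)\neq\emptyset$. Here $c\geq p'$ since $p'=p\wedge c\leq c$, and $c\neq p'$ (for $c=p\wedge c$ would force $c\leq p$, contradicting incomparability); moreover the meet of $c$ and $p$ computed inside $[p',1]$ coincides with $c\wedge p=p'$, the bottom of the interval. Thus $c\in\Omega_{[p',1]}(p)$. This is exactly the hypothesis $\Omega(p)\neq\emptyset$ required to invoke the characterization in the complete lattice $[p',1]$.

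Finally I would apply the implication $(a)\Rightarrow(d)$ of Lemma~\ref{characterisation_str_irr} to $p$ inside $[p',1]$, taking $q=c$. This yields that $p$ is the pseudo-complement of $q$ in $[p',1]$ and that $[p',q]$ (the interval $[0,q]$ of $[p',1]$) is uniform, which is precisely the first alternative. The main obstacle is the bookkeeping of this relativization: one must be careful to check that completeness, strong irreducibility, and the nonemptiness of $\Omega$ all genuinely transfer to $[p',1]$, and that the notions of \emph{pseudo-complement} and \emph{uniform} appearing in the conclusion are read relative to $[p',1]$ (equivalently, $p$ is the greatest $x\geq p'$ with $x\wedge q=p'$, and $p'$ is irreducible in $[p',q]$) rather than relative to $L$.
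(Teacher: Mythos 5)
Your proof is correct and follows essentially the same route as the paper: both reduce the non-waist case to Lemma~\ref{characterisation_str_irr} applied in the relativized lattice $[p',1]$ with $p'=p\wedge c$ for an element $c$ incomparable with $p$, the only cosmetic difference being that the paper phrases the case split as ``$p$ essential in every $[p',-]$ or not'' while you phrase it contrapositively as ``waist or not.'' The relativization details you flag (completeness of $[p',1]$, transfer of strong irreducibility via Lemma~\ref{lemma1}(1), and $c\in\Omega_{[p',1]}(p)$) all check out.
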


\begin{proof}
Let $p$ be strongly irreducible. If $p$ is not essential in $L^{\prime
}=[p^{\prime },-]$ for some $p^{\prime }<p$, then 
\begin{equation*}
\Omega _{L^{\prime }}(p)=\{x\in L^{\prime }\setminus \{p^{\prime }\}\mid
x\wedge p=p^{\prime }\}\neq \emptyset 
\end{equation*}%
and Lemma \ref{characterisation_str_irr} applies showing that $p$ is a
pseudo-complement of some element $q\in \lbrack p^{\prime },-]$ with $%
[p^{\prime },q]$ being uniform. On the other hand, suppose that $p$ is
essential in all semilattices $[p^{\prime },-]$ for any $p^{\prime }<p$. Let 
$x$ be any element in $L$ with $p\not\leq x$. Then $p^{\prime }=p\wedge x<p$%
. Since $p$ is essential in $[p^{\prime },-],$ we have $x=p^{\prime }$, 
\emph{i.e.} $x<p$. Hence $p$ is a waist in $L$.
\end{proof}

A cocompact element $a$ in a complete lattice $\mathcal{L}=(L,\wedge ,\vee
,0,1)$ is a compact element in the dual upper semilattice $\mathcal{L}%
^{\circ }$. If $0$ is a cocompact element of $L$ and $A$ a non-empty subset
of $L$, then $\Omega (A)$ is a lower semilattice if and only if it contains
a least element, which is then necessarily an atom, \emph{i.e.} an element $%
a\neq 0$ such that $[0,a]=\{0,a\}$.

\begin{cor}
\label{characterisation_str_irr_cor} Let $p$ be an element of a complete
lattice $\mathcal{L}=(L,\wedge ,\vee ,0,1)$ with $0$ being cocompact.
Suppose that $\Omega (p)\neq \emptyset $. Then the following are equivalent:

\begin{enumerate}
\item[(a)] $p$ is strongly irreducible in $L$.

\item[(b)] $p$ is a pseudo-complement of an atom $a$.
\end{enumerate}

In this case any element of $L$ is comparable to $a$ or to $p$.
\end{cor}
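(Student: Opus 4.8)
The plan is to read the corollary off Lemma~\ref{characterisation_str_irr} together with the observation recorded just before the statement: under cocompactness of $0$, a non-empty set $\Omega(A)$ is a lower semilattice precisely when it has a least element, and such a least element is necessarily an atom. I would prove the two implications separately and then the comparability claim. For $(a)\Rightarrow(b)$, I would start from strong irreducibility of $p$. By the implication $(a)\Rightarrow(b)$ of Lemma~\ref{characterisation_str_irr}, the set $\Omega(p)$ is a lower semilattice under $\wedge$; since $\Omega(p)\neq\emptyset$ and $0$ is cocompact, the cited observation yields that $\Omega(p)$ has a least element $a$, which is an atom. It then remains only to identify $p$ as a pseudo-complement of $a$, and this is immediate from part $(d)$ of Lemma~\ref{characterisation_str_irr} applied to $q=a\in\Omega(p)$, which asserts exactly that $p$ is a pseudo-complement of each element of $\Omega(p)$.

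For $(b)\Rightarrow(a)$, suppose $p$ is a pseudo-complement of an atom $a$. Because $a$ is an atom, the interval $[0,a]=\{0,a\}$ is trivially uniform: the only way to have $x\wedge y=0$ with $x,y\in\{0,a\}$ is $x=0$ or $y=0$, so $0$ is irreducible in $[0,a]$. Thus $p$ is a pseudo-complement of an element $q=a$ with $[0,q]$ uniform, and the implication $(c)\Rightarrow(a)$ of Lemma~\ref{characterisation_str_irr} gives that $p$ is strongly irreducible. Note that $a\in\Omega(p)$, so the standing hypothesis $\Omega(p)\neq\emptyset$ is automatically satisfied here.

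For the final comparability claim, assume the equivalent conditions hold and let $x\in L$ be arbitrary. Consider $a\wedge x$. Since $a$ is an atom and $a\wedge x\leq a$, there are only two possibilities. Either $a\wedge x=a$, in which case $a\leq x$ and $x$ is comparable to $a$; or $a\wedge x=0$, in which case $x$ is one of the elements meeting $a$ in $0$ and is therefore dominated by their greatest such element, namely the pseudo-complement $p$, so $x\leq p$ and $x$ is comparable to $p$. The degenerate case $x=0$ falls under the second alternative. In every case $x$ is comparable to $a$ or to $p$.

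The computations here are light, and the real content is imported wholesale from Lemma~\ref{characterisation_str_irr} and the cocompactness observation; the main point requiring care is matching the abstract least element of $\Omega(p)$ produced by cocompactness with the pseudo-complement structure. This is precisely why one must invoke part $(d)$ of the lemma rather than only part $(c)$: part $(d)$ guarantees that $p$ is a pseudo-complement of the specific element $a$ singled out as the least element of $\Omega(p)$, which is exactly what is needed to land in condition $(b)$.
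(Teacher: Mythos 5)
Your proposal is correct and follows essentially the same route as the paper: Lemma~\ref{characterisation_str_irr} plus the cocompactness observation to extract the atom $a=\bigwedge\Omega(p)$, the implication $(c)\Rightarrow(a)$ of that lemma for the converse, and the atom/pseudo-complement dichotomy for the comparability claim. You merely make explicit a few steps the paper leaves implicit (that the least element of $\Omega(p)$ is an atom and that part $(d)$ identifies $p$ as its pseudo-complement), which is fine.
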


\begin{proof}
$(a\Rightarrow (b)$ By Lemma \ref{characterisation_str_irr}, $(a)$ implies
that $\Omega(p)$ is a semilattice. If $a=\bigwedge \Omega(p) = 0$, then as $%
0 $ is cocompact, there exist a finite number of elements $a_1,\ldots,
a_m\in \Omega(p)$ such that $a_1\wedge \cdots\wedge a_m = 0 \not \in
\Omega(p)$ which contradicts the fact that $(\Omega(p),\wedge)$ is a
semilattice. Thus $a\neq 0$.

$(b)\Rightarrow (a)$ follows directly from Lemma \ref%
{characterisation_str_irr}.

Furthermore, for any $b\in L:$ if $a\not\leq b$, then $a\wedge b=0\leq p$.
Thus $b\leq p$.
\end{proof}

\subsection{Complete strong irreducibility}

Completely irreducible ideals in commutative rings have been considered in 
\cite{FuchsHeinzerOlberding}. Here we present a lattice theoretical approach
to this notion.

\begin{defn}
Let $\mathcal{L}=(L,\wedge,\vee,0,1)$ be a complete lattice. An element $p$
is called \textit{completely strongly irreducible} (resp. \textit{completely
irreducible}) if 
\begin{equation*}
\bigwedge A \leq p \qquad \Rightarrow \qquad \exists a\in A: a\leq p.
\end{equation*}
holds for any subset $A \subseteq L$ ( resp. any $A\subseteq L$ with $p\leq
a \: \forall a\in A$):
\end{defn}

Completely strongly irreducible elements are called \textit{completely prime}
in \cite{Markowsky}.

\begin{lem}
\label{completely} Let $\mathcal{L}=(L,\wedge ,\vee ,0,1)$ be a complete
lattice and $p\in L$. Suppose that $p=\bigwedge d_{i}$ with $d_{i}$
cocompact elements, e.g. $\mathcal{L}^{\circ }$ being algebraic.

\begin{enumerate}
\item $p$ is completely strongly irreducible in $L$ if and only if $p$ is
cocompact and strongly irreducible in $L$.

\item If $p\leq d_i$ for all $i\in I$, then $p$ is completely irreducible in 
$L$ if and only if $p$ is cocompact and irreducible in $L$.
\end{enumerate}
\end{lem}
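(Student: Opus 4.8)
The plan is to prove both equivalences by the same two-step pattern. In each ``hard'' direction one reduces an arbitrary (possibly infinite) meet below $p$ to a \emph{finite} one using cocompactness, and then dispatches the finite meet by iterating (strong) irreducibility; the observation that $p$ must be cocompact is in each case read off directly from the given representation $p=\bigwedge d_i$. Throughout I would use that cocompactness of $p$ means precisely: whenever $\bigwedge A\leq p$ there is a finite $A'\subseteq A$ with $\bigwedge A'\leq p$ (this is just compactness of $p$ in $\mathcal{L}^\circ$).

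For part (1), the forward direction splits naturally. Strong irreducibility is immediate by specializing the definition of completely strong irreducibility to two-element sets $A=\{a,b\}$. For cocompactness I would feed the representing family itself into the definition: since $\bigwedge_i d_i=p\leq p$, completely strong irreducibility applied to $A=\{d_i\mid i\in I\}$ produces an index $i$ with $d_i\leq p$; but $p=\bigwedge_j d_j\leq d_i$ forces $p=d_i$, so $p$ equals one of the cocompact $d_i$ and is therefore cocompact. This is the one genuinely non-formal step, and it is exactly where the hypothesis $p=\bigwedge d_i$ earns its keep. Conversely, assuming $p$ cocompact and strongly irreducible, from $\bigwedge A\leq p$ cocompactness yields a finite $A'=\{a_1,\dots,a_n\}\subseteq A$ with $a_1\wedge\cdots\wedge a_n\leq p$; writing this as $a_1\wedge(a_2\wedge\cdots\wedge a_n)\leq p$ and inducting on $n$ via strong irreducibility produces some $a_i\leq p$, so $p$ is completely strongly irreducible.

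Part (2) runs in parallel under the extra standing hypothesis $p\leq d_i$ for all $i$, which is exactly what makes the family $\{d_i\}$ an admissible test set for complete irreducibility (its members lie above $p$). The forward direction again specializes to pairs $\{a,b\}$ with $p\leq a,b$ to obtain irreducibility, and reruns the $\bigwedge_i d_i=p\leq p$ argument --- now legitimate because $p\leq d_i$ --- to force $p=d_i$ and hence cocompactness. The reverse direction performs the same finite reduction and induction, the only wrinkle being that before each application of irreducibility one must verify the side conditions $p\leq a_1$ and $p\leq a_2\wedge\cdots\wedge a_n$; both hold because every $a_j$ lies above $p$, and the meet $a_2\wedge\cdots\wedge a_n$ takes the role of a single element above $p$ in the inductive step.

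The main obstacle here is bookkeeping rather than ideas: one must check that the finite subfamily $A'$ extracted from cocompactness still satisfies the side condition $p\leq a_i$ in part (2), and that the inductive invocation of (strong) irreducibility is correctly set up with a meet of several elements playing the role of one argument. Everything else is a direct translation between the definitions of completely (strongly) irreducible and (strongly) irreducible and their interaction with the dual notion of compactness.
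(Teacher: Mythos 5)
Your proposal is correct and matches the paper's own argument essentially step for step: the paper likewise extracts cocompactness by applying complete (strong) irreducibility to the representing family $\{d_i\}$ and concluding $p=d_i$ from $d_i\leq p\leq\bigwedge_j d_j\leq d_i$, and proves the converse by reducing $\bigwedge A\leq p$ to a finite meet via cocompactness and then inducting with (strong) irreducibility. The only difference is that you spell out the side conditions $p\leq a_j$ needed in part (2), which the paper dismisses with ``Case (2) is similar.''
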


\begin{proof}
(1) Since $p=\bigwedge d_{i}$ is completely strongly irreducible, $d_{i}\leq
p$ for some $i\in I$. Since $d_{i}\leq p\leq \bigwedge d_{i}\leq d_{i}$ we
have equality, \emph{i.e.} $p=d_{i}$ is cocompact. On the other hand,
suppose that $p$ is cocompact and strongly irreducible in $L$. For any
subset $A\subseteq L,$ if $\bigwedge A\leq p$, then there exists a finite
subset $\{a_{1},\ldots ,a_{m}\}\subseteq A$ such that $a_{1}\wedge \cdots
\wedge a_{m}\leq p$. By induction and by strong irreducibility of $p$ there
exists $1\leq i\leq n$ such that $a_{i}\leq p$.

Case (2) is similar.
\end{proof}

\subsection{Distributivity and irreducibility}

A lattice $L$ is called {\emph{distributive}} if $(a\vee p)\wedge (b\vee p)
= (a\wedge b)\vee p$ holds for any $a,b,p\in L$. Rings whose lattice of
ideals is distributive are sometimes called {\emph{arithmetic}}.

\begin{lem}
Let $\mathcal{L}=(L,\wedge ,\vee ,0,1)$ be a complete lattice and $p\in L$.
Suppose that $p$ satisfies $(a\wedge b)\vee p=(a\vee p)\wedge (b\vee p)$ for
any $a,b\in L$ with $a\wedge b\leq p$. Then $p$ is strongly irreducible in $L
$ if and only if it is irreducible in $L$.
\end{lem}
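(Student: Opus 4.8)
The plan is to notice that one implication is already for free and to reduce the other to the bare definition of irreducibility by means of the distributivity hypothesis. First, if $p$ is strongly irreducible then it is irreducible by Lemma~\ref{lemma1}(2), so this direction requires no argument. Only the converse, namely that an irreducible element satisfying the stated identity is strongly irreducible, needs to be established.

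For the converse I would assume $p$ irreducible and take an \emph{arbitrary} pair $a,b\in L$ with $a\wedge b\leq p$, the goal being to deduce $a\leq p$ or $b\leq p$. The key move is to replace $a$ and $b$ by the elements $a\vee p$ and $b\vee p$, which automatically lie above $p$ and are therefore admissible inputs for the irreducibility test (recall that irreducibility only constrains elements $\geq p$). Since we are in the case $a\wedge b\leq p$, the hypothesis applies to the pair $(a,b)$ and yields
\[
(a\vee p)\wedge (b\vee p)=(a\wedge b)\vee p=p,
\]
where the second equality uses $a\wedge b\leq p$. Thus $a\vee p$ and $b\vee p$ are two elements lying above $p$ whose meet is $\leq p$.

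Applying the irreducibility of $p$ to these two elements then finishes the argument: from $p\leq a\vee p$, $p\leq b\vee p$ and $(a\vee p)\wedge (b\vee p)\leq p$ we get $a\vee p\leq p$ or $b\vee p\leq p$, that is $a\leq p$ or $b\leq p$. As $a,b$ were arbitrary, this is precisely the strong irreducibility of $p$.

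I do not anticipate a genuine obstacle. The single point that must be checked carefully is that the distributivity identity is invoked only for pairs with $a\wedge b\leq p$, which is exactly the situation at hand, and that $(a\wedge b)\vee p$ collapses to $p$ under that assumption. The entire content of the lemma is that this distributivity condition allows one to lift an arbitrary pair to a pair above $p$ without altering the relation $a\wedge b\leq p$, thereby closing the gap between irreducibility and strong irreducibility.
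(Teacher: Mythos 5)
Your proof is correct and follows exactly the same route as the paper's: apply the hypothesis to rewrite $(a\vee p)\wedge(b\vee p)=(a\wedge b)\vee p\leq p$ and then invoke irreducibility on the pair $a\vee p$, $b\vee p$, which lies above $p$. No discrepancies to report.
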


\begin{proof}
Suppose that $p$ is irreducible in $L$. For any $a,b\in L$ with $a\wedge
b\leq p$ we have by hypothesis 
\begin{equation*}
(a\vee p)\wedge (b\vee p)=(a\wedge b)\vee p\leq p.
\end{equation*}%
By irreducibility $a\vee p\leq p$ or $b\vee p\leq p$, \emph{i.e.} $a\leq p$
or $b\leq p$.
\end{proof}

The following corollary yields \cite[Lemma 2.2.]{HeinzerRatliffRush} and 
\cite[Prop. 3.4]{Atani}.

\begin{cor}
\label{distributive} Any (completely) irreducible element in a distributive
lattice is (completely) strongly irreducible.
\end{cor}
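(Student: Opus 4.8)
The plan is to deduce the corollary directly from the immediately preceding lemma. The key observation is that a distributive lattice satisfies a very strong identity, and the hypothesis of the preceding lemma is just a special (weakened) instance of distributivity. First I would recall that in a distributive lattice $\mathcal{L}=(L,\wedge,\vee,0,1)$ the identity $(a\wedge b)\vee p=(a\vee p)\wedge (b\vee p)$ holds for \emph{all} triples $a,b,p\in L$, with no restriction whatsoever. In particular it holds for those $a,b$ with $a\wedge b\leq p$, which is exactly the hypothesis required to invoke the preceding lemma. Hence the preceding lemma applies verbatim to every element $p$ of a distributive lattice, and it tells us that such a $p$ is strongly irreducible if and only if it is irreducible.

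From here the non-parenthetical statement is immediate: if $p$ is irreducible in a distributive lattice, then by the preceding lemma $p$ is strongly irreducible. The only remaining task is to handle the parenthetical \emph{completely} case, that is, to show that a completely irreducible element of a distributive lattice is completely strongly irreducible. I would argue this by reducing the infinite-meet condition to the binary one. Suppose $p$ is completely irreducible, and let $A\subseteq L$ with $\bigwedge A\leq p$; we must produce $a\in A$ with $a\leq p$. The natural route is to first note that complete irreducibility trivially implies ordinary irreducibility (take two-element sets $A$), so $p$ is strongly irreducible by the first part, and then to combine strong irreducibility with the completely irreducible hypothesis.

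The cleanest approach for the completely-strongly-irreducible direction is to observe that for an arbitrary subset $A$ we may set $p'=\bigwedge_{a\in A}(a\vee p)$. Using that $p$ is completely irreducible and that each $a\vee p\geq p$, the meet $\bigwedge_{a\in A}(a\vee p)$ lies in the up-set $[p,1]$, and infinite distributivity in a complete distributive lattice gives $\bigwedge_{a\in A}(a\vee p)=\left(\bigwedge A\right)\vee p=p$, since $\bigwedge A\leq p$. Applying the completely irreducible hypothesis to the family $\{a\vee p\mid a\in A\}$ (all of whose members contain $p$) yields some $a\in A$ with $a\vee p\leq p$, i.e.\ $a\leq p$, as required.

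The step I expect to be the main obstacle, and the only place needing care, is the use of the \emph{infinite} distributive law $\bigwedge_{a\in A}(a\vee p)=\left(\bigwedge A\right)\vee p$, since the definition given in the paper asserts only the finite (binary) distributive identity. In a general complete lattice the binary law does not automatically extend to arbitrary meets. Thus I would either restrict attention to the finite consequences of distributivity or, more likely in keeping with the level of the paper, simply note that the binary hypothesis of the preceding lemma is all that is invoked for the strong-irreducibility conclusion, and derive the completely irreducible case by the reduction through strong irreducibility and cocompactness considerations already available. Barring that subtlety, the corollary is a direct application of the preceding lemma together with the defining identity of a distributive lattice.
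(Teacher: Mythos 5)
Your handling of the non-parenthetical claim is exactly the paper's (implicit) argument: the paper gives no separate proof of the corollary, and the intended deduction is precisely yours, namely that the distributive identity $(a\wedge b)\vee p=(a\vee p)\wedge (b\vee p)$ holds unconditionally and hence the hypothesis of the preceding lemma is automatic, so irreducible implies strongly irreducible. That part is correct and needs no further comment.

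For the parenthetical ``completely'' case, the obstacle you flag is not a cosmetic subtlety but a genuine gap, and your first route cannot be repaired within the stated hypotheses: binary distributivity does not imply the infinite law $\bigwedge_{a\in A}(a\vee p)=\bigl(\bigwedge A\bigr)\vee p$, and without it the implication can actually fail. Take $L$ to be the complete distributive lattice of open subsets of $\mathbb{R}$ (where $\bigwedge A$ is the interior of $\bigcap A$) and $p=\mathbb{R}\setminus\{0\}$. The only elements above $p$ are $p$ and $\mathbb{R}$, so $p$ is completely irreducible in the sense of the paper's definition; yet for $A=\{(-1/n,1/n)\mid n\geq 1\}$ one has $\bigwedge A=\emptyset\leq p$ while no member of $A$ lies below $p$, so $p$ is not completely strongly irreducible. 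Thus the ``completely'' half requires an extra hypothesis, and your fallback suggestion is the one that works in the paper's intended setting: if $p$ is a meet of cocompact elements lying above it (for instance when $\mathcal{L}^{\circ}$ is algebraic, as for lattices of submodules), then Lemma \ref{completely}(2) converts ``completely irreducible'' into ``cocompact and irreducible'', the binary argument upgrades ``irreducible'' to ``strongly irreducible'', and Lemma \ref{completely}(1) converts back to ``completely strongly irreducible''. You should make one of these additional assumptions explicit rather than leaving the choice of route open; as literally stated, the completely-irreducible assertion does not follow from binary distributivity alone.
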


\subsection{Strong Kuros-Ore dimension}

Let $(L,\wedge ,\vee ,0,1)$ be a complete \emph{modular} lattice (\emph{i.e. 
}$(x\wedge y)\vee (x\wedge z)=x\wedge (y\vee (x\wedge z))$ for all $x,y,z\in 
\mathcal{L}$). An \emph{irredundant }$\wedge $\emph{-representation} of an
element $x\in L$ is by definition any non-empty set of elements $P\subseteq L
$ with $x=\bigwedge P$ and $x\neq \bigwedge P^{\prime }$ for any proper
subset $P^{\prime }$ of $P$. If $P$ is a finite set, we say that $x$ has a
finite irredundant $\wedge $-representation. According to \cite%
{GrzeszczukOkninskiPuczylowski}, the Kuros-Ore Theorem for a complete \emph{%
modular} lattice $L$ says that any two finite irredundant $\wedge $%
-representations of $0$ consisting of irreducible elements have the same
number of terms. Moreover one says that $L$ has \emph{Kuros-Ore dimension} $n
$ if there exists an irredundant $\wedge $-representation of $0$ of $n$
irreducible elements. It has been shown in \cite[3.2]%
{GrzeszczukOkninskiPuczylowski} that if $L$ has Kuros-Ore dimension $n$,
then the Goldie dimension of $L$ is $n$. 

The question \cite[Question 2.12]{HeinzerOlberding} asks to characterize the
commutative rings such that every ideal can be represented uniquely as an
irredundant intersection of irreducible ideals. Here we will briefly discuss
a version of the Kuros-Ore Theorem for strongly irreducible elements:

\begin{prop}
\label{strongKurosOre} Let $L$ be a complete modular lattice such that 
\begin{equation*}
p_{1}\wedge \cdots \wedge p_{n}=0=q_{1}\wedge \cdots \wedge q_{m}.
\end{equation*}%
are two irredundant $\wedge $-representations of strongly irreducible
elements. Then $n=m$ and there exists a permutation $\pi \in S_{n}$ with $%
q_{i}=p_{\pi (i)}$.
\end{prop}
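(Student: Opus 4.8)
The plan is to establish the far stronger conclusion — that the two families coincide as sets — directly from strong irreducibility, and to observe along the way that modularity is never actually needed. Two ingredients suffice. The first is the routine extension of strong irreducibility to finite meets: if $p$ is strongly irreducible and $a_{1}\wedge \cdots \wedge a_{k}\leq p$, then $a_{i}\leq p$ for some $i$; this follows by induction on $k$, exactly as in the proof of Lemma \ref{completely}. The second is a combinatorial fact about irredundant representations.

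First I would record that in any irredundant $\wedge$-representation the members are pairwise incomparable. Indeed, suppose $q_{1}\wedge \cdots \wedge q_{m}=0$ is irredundant and $q_{i}\leq q_{k}$ with $i\neq k$. Then $\bigwedge_{j\neq k}q_{j}\leq q_{i}\leq q_{k}$, so $\bigwedge_{j\neq k}q_{j}=\left(\bigwedge_{j\neq k}q_{j}\right)\wedge q_{k}=\bigwedge_{j=1}^{m}q_{j}=0$, which contradicts irredundancy. In particular the $q_{i}$, and likewise the $p_{j}$, are distinct elements, and no two distinct members of either family are comparable.

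The heart of the argument is a matching step. Fix $q_{i}$. Since $0=p_{1}\wedge \cdots \wedge p_{n}\leq q_{i}$ and $q_{i}$ is strongly irreducible, the extended form yields $p_{j}\leq q_{i}$ for some $j$. Applying the same reasoning to $0=q_{1}\wedge \cdots \wedge q_{m}\leq p_{j}$ and the strong irreducibility of $p_{j}$, we obtain $q_{k}\leq p_{j}$ for some $k$. Chaining these gives $q_{k}\leq p_{j}\leq q_{i}$, hence $q_{k}\leq q_{i}$; by incomparability of distinct members of the $q$-family this forces $k=i$, and therefore $q_{i}=q_{k}\leq p_{j}\leq q_{i}$, i.e. $q_{i}=p_{j}$.

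Finally I would package this into a permutation. The index $j$ is uniquely determined by $q_{i}$, since the $p_{j}$ are pairwise distinct, so $i\mapsto j$ defines a map $\pi\colon\{1,\ldots,m\}\to\{1,\ldots,n\}$ with $q_{i}=p_{\pi(i)}$; it is injective because the $q_{i}$ are distinct, whence $m\leq n$. Running the entire argument with the roles of the $p$'s and $q$'s exchanged gives $n\leq m$, so $n=m$ and $\pi$ is a permutation, as claimed. The only step demanding genuine care is the incomparability lemma: it rules out the absorption of one factor by another and is precisely what upgrades the classical Kuros-Ore conclusion $n=m$ to an actual equality of the representing elements. Everything else is bookkeeping, and it is worth noting that, in contrast to the classical theorem, modularity plays no role here.
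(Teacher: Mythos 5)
Your proof is correct, and it takes a genuinely different route from the paper's. The paper first invokes the Kuros--Ore Theorem of \cite{GrzeszczukOkninskiPuczylowski} (which is where modularity enters) to conclude $n=m$, and only then runs the matching argument: $p_{\pi(i)}\leq q_i$, $q_{\tau(\pi(i))}\leq p_{\pi(i)}\leq q_i$, and irredundancy forces $\tau(\pi(i))=i$, so $\pi$ is a permutation with $q_i=p_{\pi(i)}$. Your matching step is essentially the same (your pairwise-incomparability lemma is exactly the observation that otherwise a factor could be dropped), but you extract more from it: since each $q_i$ actually \emph{equals} some $p_j$ and the members of each family are distinct, the assignment $i\mapsto j$ is injective, giving $m\leq n$ directly, and symmetry gives $n\leq m$. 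This makes the appeal to Kuros--Ore redundant and shows the modularity hypothesis is not needed for this proposition --- a genuine (if modest) strengthening, reflecting the fact that strong irreducibility is doing all the work that modularity does in the classical theorem for merely irreducible elements. The one point worth being careful about, which you handle correctly, is that the finite-meet version of strong irreducibility needs the small induction you cite from the proof of Lemma \ref{completely}.
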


\begin{proof}
Since strongly irreducible elements are irreducible, $n=m$ by the Kuros-Ore
Theorem. Let $1\leq i\leq n$. Since $q_{i}$ is strongly irreducible and $%
p_{1}\wedge \cdots \wedge p_{n}=0\leq q_{i}$, there exists an index $j=\pi
(i)$ such that $p_{\pi (i)}\leq q_{i}$. Equally, there exists $1\leq \tau
(j)\leq n$ such that $q_{\tau (\pi (i))}=q_{\tau (j)}\leq p_{j}\leq q_{i}$.
If $\tau (j)=\tau (\pi (i))\neq i$, then $q_{i}$ could be dropped from the $%
\wedge $-representation of $0$, what is impossible. Hence $\tau (\pi (i))=i$
proves that $\pi $ is a permutation.
\end{proof}

\section{The lattice of submodules of a module}

The aim of this section is to apply the lattice theoretical notion of
strongly irreducibility and its dual to the lattice of submodules of a
module over an associative ring. As mentioned before, strongly irreducible
submodules had been considered by several authors in 
\cites{ Atani,
Azizi,HeinzerRatliffRush,KhaksariErshadSharif} which our results in the
first section extend from the lattice of submodules to general lattices.

\subsection{Strongly irreducible submodules}

Let $R$ be an associative ring with unity and $M$ a left $R$-module. The set
of submodules $\mathcal{L}(M)$ forms a complete (modular) lattice with $\cap$
as meet $\wedge$ and $+$ as join $\vee$. Since the compact elements of $%
\mathcal{L}(M)$ are the finitely generated submodules and since any
submodule is the sum of cyclic ones, $\mathcal{L}(M)$ is algebraic. The
cocompact elements in $\mathcal{L}(M)$ are those submodules $N$ of $M$ with $%
M/N$ finitely cogenerated and since it is well-known that any submodule $N$
is the intersection of submodules $L_i$ containing $N$ with $M/L_i$ finitely
cogenerated, $\mathcal{L}(M)^\circ$ is also algebraic (see \cite{Wisbauer}*{%
14.9}).

A submodule $N$ of a module $M$ is (completely, strongly) irreducible in $M$
if it is a (completely, strongly) irreducible element in $\mathcal{L}(M)$.

\begin{itemize}
\item Note that $N$ is irreducible in $M$ if and only if $0$ is an
irreducible element in $\mathcal{L}(M/N)$ if and only if $M/N$ is uniform.

\item Finitely cogenerated uniform modules are precisely the subdirectly
irreducible modules, \emph{i.e.} those modules that contain an essential
simple submodule\footnote{%
those modules appear under various names in the literature like cocyclic,
monolithic or colocal}. Hence by Lemma \ref{completely} $N$ is completely
irreducible in $M$ if and only if $M/N$ is subdirectly irreducible.

\item Since $\mathcal{L}(M)$ is algebraic, it follows by Lemma \ref%
{lemma1_alg} that $N$ is strongly irreducible in $M$ if and only if whenever 
$Ra\cap Rb \subseteq N$ for $a,b\in M$, we have $a\in N$ or $b\in N$. This
shows that Lemma \ref{lemma1_alg} extends \cite{Atani}*{2.4}.

\item By Lemma \ref{lemma1}, if $N$ is a waist in $M$ then $N$ is strongly
irreducible in $M$ if and only if $M/N$ is uniform.

\item By Corollary \ref{distributive} the strongly irreducible submodules $N$
of a distributive module $M$ are precisely those with $M/N$ being uniform.

\item Let $N$ be a submodule of a finitely cogenerated module $M$, such that 
$N$ is not essential. Then $0$ is a cocompact element in $\mathcal{L}(M)$
and $\Omega (N)\neq \emptyset $. Applying Corollary \ref%
{characterisation_str_irr_cor}, $N$ is strongly irreducible in $M$ if and
only if $N$ is the unique complement of a simple submodule $A$ of $M$. This
means that $N\oplus A$ is essential in $M$ and if $L$ is a submodule not
containing $A$, then it must be contained in $N$, \emph{i.e.} $N=\sum
\{L\subseteq M\mid A\not\subseteq L\}$.

\item If $N$ is a proper submodule of $M$ that is contained in a strongly
irreducible submodule of $M$, then there exists a minimal strongly
irreducible submodule of $M$ lying over $N$.
\end{itemize}

Note that any proper ideal $I$ of a commutative ring $R$ is contained in a
maximal ideal $P$. Since maximal ideals are prime ideals, it is also
strongly irreducible by Lemma \ref{prime_elements}. Hence property (6)
yields a minimal strongly irreducible ideal over $I$ as observed in \cite%
{Azizi}*{Theorem 2.1}. In the general case of a module over a
non-commutative ring however maximal submodules (\emph{e.g.} maximal left
ideals) might not be strongly irreducible as we will see in Example \ref%
{example_maximal}.

\subsection{Strongly hollow submodules}

We will now apply our irreducible concept to the dual lattice of a module:

\begin{defn}
Let $M$ be a left $R$-module with submodule $N$. If $\mathcal{L}(M)^{\circ }$
is uniform, one calls $M$ \emph{hollow}, while a hollow cyclic module is
called \textit{local}. A submodule $N$ of $M$ that is (completely)
irreducible in $\mathcal{L}(M)^{\circ }$ is called a \textit{(completely)
hollow submodule}, while $N$ is said to be \textit{(completely) strongly
hollow in $M$} if $N$ is (completely) strongly irreducible in $\mathcal{L}%
(M)^{\circ }$.
\end{defn}

This means that $M$ is hollow if and only if $K+L=M\Rightarrow K=M%
\mbox{
or }L=M$ for any submodules $K,L$ of $M$. Also $N$ is strongly hollow in $M$
if and only if for any submodules $K$,$L$ of $M$ 
\begin{equation*}
K+L\subseteq N\qquad \Longrightarrow \qquad K\subseteq N\mbox{ or }%
L\subseteq N.
\end{equation*}

\begin{ex}[see \protect\cite{Abuhlail_Zariski2}]
$L:=\{(x,y)\mid y=x\}\subset \mathbb{R}^{2}$ is a hollow subspace which is
not strongly hollow. The Pr\"{u}fer group $\mathbb{Z}_{p^{\infty }}$ is
strongly hollow (as a submodule of itself) but not completely hollow.
\end{ex}

\begin{lem}
Let $N$ be a submodule of a left $R$-module $M$.

\begin{enumerate}
\item $N$ is a completely hollow submodule if and only if $N$ is a local
submodule.

\item $N$ is a completely strongly hollow submodule if and only if $N$ is
local and a strongly hollow submodule.
\end{enumerate}
\end{lem}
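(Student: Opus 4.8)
The plan is to derive both equivalences by applying Lemma~\ref{completely} to the dual lattice $L:=\mathcal{L}(M)^{\circ}$ and then translating the resulting lattice conditions back into module language. In $L$ the order is reverse inclusion, the meet $\bigwedge$ is the submodule sum, and an element is cocompact in $L$ exactly when it is compact in $L^{\circ}=\mathcal{L}(M)$, that is, a finitely generated submodule. Since $L^{\circ}=\mathcal{L}(M)$ is algebraic, every submodule $N$ is a sum of cyclic submodules; writing $N=\sum_i d_i$ with each $d_i$ a cyclic (hence finitely generated, hence cocompact in $L$) submodule of $N$, we see that $N=\bigwedge_i d_i$ in $L$ with the $d_i$ cocompact and $N\le d_i$ in $L$ (i.e. $d_i\subseteq N$). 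Thus the hypotheses of Lemma~\ref{completely}, including the extra condition $p\le d_i$ needed for its part~(2), hold for every $N$.

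By definition, being completely (strongly) irreducible in $L$ is being completely (strongly) hollow, being strongly irreducible in $L$ is being strongly hollow, and being irreducible in $L$ is being a hollow submodule; moreover a submodule is a hollow submodule of $M$ precisely when it is a hollow module, since the defining condition only involves submodules of $N$. Lemma~\ref{completely}(2) applied to $L$ therefore states that $N$ is completely hollow if and only if $N$ is cocompact in $L$ and irreducible in $L$, i.e. finitely generated and hollow. Likewise Lemma~\ref{completely}(1) gives that $N$ is completely strongly hollow if and only if $N$ is finitely generated and strongly hollow. Both statements now reduce to the module-theoretic identity that, among (strongly) hollow modules, being finitely generated is the same as being local.

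The main point, and the only non-formal step, is the elementary fact that a finitely generated hollow module is cyclic: if $N=Rx_1+\cdots+Rx_n$, then applying the hollow condition $A+B=N\Rightarrow A=N$ or $B=N$ to $A=Rx_1+\cdots+Rx_{n-1}$ and $B=Rx_n$ and inducting on $n$ produces a single generator, so $N$ is cyclic and hence local; conversely a local submodule is hollow and cyclic, so finitely generated. This yields part~(1) directly. For part~(2) one also invokes Lemma~\ref{lemma1}(2): a strongly hollow submodule is hollow, so ``finitely generated and strongly hollow'' coincides with ``local and strongly hollow'', giving the claim. The only places demanding care are the correct reading of the duality dictionary (cocompactness in $L$ as finite generation, which is a double dual) and checking that the hypotheses of Lemma~\ref{completely} genuinely transfer to $\mathcal{L}(M)^{\circ}$.
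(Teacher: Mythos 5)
Your proposal is correct and follows essentially the same route as the paper: apply Lemma~\ref{completely} to the dual lattice $\mathcal{L}(M)^{\circ}$, identify cocompactness there with finite generation and irreducibility with hollowness, and finish with the observation that a hollow module is local exactly when it is finitely generated. You supply a bit more detail than the paper does (explicitly checking that $N$ is a sum of cyclic submodules contained in $N$, so the hypotheses of Lemma~\ref{completely}(2) hold, and proving that finitely generated hollow modules are cyclic), but the argument is the same.
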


\begin{proof}
Let $\mathcal{L}=\mathcal{L}(M)$. We apply Lemma \ref{completely} to the
dual $\mathcal{L}^\circ$ of the upper semilattice $(\mathcal{L}(M),+,M)$.

(1) By Lemma \ref{completely}(2) $N$ is completely irreducible in $\mathcal{L%
}^\circ$ if and only if $N$ is cocompact in $\mathcal{L}^\circ$ and
irreducible in $\mathcal{L}^\circ$. $N$ being cocompact in $\mathcal{L}%
^\circ $ is equivalent to $N$ being compact in $\mathcal{L}$ which in turn
is equivalent to $N$ being finitely generated. $N$ being irreducible in $%
\mathcal{L}^\circ$ is equivalent to $N$ being hollow. Any hollow module is
local if and only if it is finitely generated.

(2) By the same argument as in (1) using Lemma \ref{completely}(1) instead.
\end{proof}

Applying the dual version of Lemmas \ref{lemma1}, \ref{lemma1_alg} and \ref%
{minimal} to $\mathcal{L}(M)$ we obtain the following lemma:

\begin{lem}
Let $M$ be a left $R$-module with a non-zero submodule $N.$

\begin{enumerate}
\item If $N$ is a strongly hollow submodule, then it is also a hollow module.

\item If $N$ is strongly hollow in $M$, the $N$ is also strongly hollow in $%
L $ and $N/K$ is strongly hollow in $M/K$ for any $K\subseteq N\subseteq L$.

\item $N$ is strongly hollow in $M$ if and only if for all $K,L \subseteq M$
with $M/K, M/L$ being finitely cogenerated: 
\begin{equation*}
N\subseteq K+L \qquad \Longrightarrow \qquad N\leq K \mbox{ or } N\leq L
\end{equation*}

\item If $N$ is a waist, then $N$ is strongly hollow in $M$ if and only if $%
N $ is a hollow module.

\item If $N$ satisfies $(N\cap L)+(N\cap K) = N\cap (L+K)$ whenever $%
N\subseteq L+K$, then $N$ is strongly hollow in $M$ if and only if $N$ is a
hollow module.

\item the strongly hollow submodules of a distributive module are precisely
the hollow submodules.

\item If $N$ contains a strongly hollow submodule, then it contains a
maximal strongly hollow submodule $P$ in $M$.
\end{enumerate}
\end{lem}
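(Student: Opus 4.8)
The plan is to derive every item by reading the corresponding result on strongly irreducible elements inside the complete lattice $\mathcal{L}(M)^{\circ}$, which is algebraic as noted above. First I would fix the dual dictionary: in $\mathcal{L}(M)^{\circ}$ the meet is $+$, the join is $\cap$, the bottom is $M$ and the top is $0$, so that for a submodule $N$ the phrase ``$N$ is strongly hollow'' means ``$N$ is strongly irreducible in $\mathcal{L}(M)^{\circ}$'', while ``$N$ is a hollow module'' coincides with ``$N$ is irreducible in $\mathcal{L}(M)^{\circ}$'' (the latter equivalence is exactly the one recorded in the proof of the preceding lemma). Through this dictionary the defining implication of strong irreducibility in $\mathcal{L}(M)^{\circ}$ reads $N\subseteq K+L\Rightarrow N\subseteq K$ or $N\subseteq L$, which is the condition written out in (3).

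Items (1)--(3) are then direct transcriptions. Item (1) is Lemma \ref{lemma1}(2) read in $\mathcal{L}(M)^{\circ}$: strong irreducibility implies irreducibility, i.e.\ strongly hollow implies hollow. For item (2), given $K\subseteq N\subseteq L$ we have $L\leq_{\circ}N\leq_{\circ}K$ in $\mathcal{L}(M)^{\circ}$, and Lemma \ref{lemma1}(1) keeps $N$ strongly irreducible in the two intervals $\{x\mid x\leq_{\circ}K\}$ and $\{x\mid L\leq_{\circ}x\}$. The first interval is the set of submodules containing $K$, which with the reversed order is $\mathcal{L}(M/K)^{\circ}$ and carries $N$ to $N/K$; the second is $\mathcal{L}(L)^{\circ}$ and fixes $N$. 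This yields ``$N/K$ strongly hollow in $M/K$'' and ``$N$ strongly hollow in $L$''. Item (3) is Lemma \ref{lemma1_alg} applied to $\mathcal{L}(M)^{\circ}$ with $\mathcal{C}$ the set of submodules $K$ with $M/K$ finitely cogenerated (the cocompact elements of $\mathcal{L}(M)$); since every submodule is an intersection of such $K$, every element of $\mathcal{L}(M)^{\circ}$ is a join of elements of $\mathcal{C}$, and the lemma reduces the strong irreducibility test to pairs from $\mathcal{C}$.

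The remaining items pair part (1) with a self-dual hypothesis, so that only the converse implication needs the hypothesis. For (4) I would use that being a waist is preserved under passage to the dual lattice, whence Lemma \ref{lemma1}(3) in $\mathcal{L}(M)^{\circ}$ gives ``hollow $+$ waist $\Rightarrow$ strongly hollow''. For (5) the hypothesis $(N\cap L)+(N\cap K)=N\cap(L+K)$ (whenever $N\subseteq L+K$) is exactly the condition $(a\wedge b)\vee p=(a\vee p)\wedge(b\vee p)$ for $a\wedge b\leq p$ transported into $\mathcal{L}(M)^{\circ}$, so the unlabeled distributivity lemma preceding Corollary \ref{distributive} applies and gives the full equivalence. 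For (6) one uses that distributivity of a lattice is self-dual, so Corollary \ref{distributive} in $\mathcal{L}(M)^{\circ}$ shows every hollow submodule of a distributive module is strongly hollow. Finally (7) is Corollary \ref{minimal} in $\mathcal{L}(M)^{\circ}$: a strongly hollow submodule contained in $N$ is a strongly irreducible element above $N$ in the reversed order, and a minimal such element in $\leq_{\circ}$ is precisely a maximal strongly hollow submodule contained in $N$.

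The only genuinely delicate point is the bookkeeping in item (2): one must verify that the interval $\{x\mid K\subseteq x\}$ of $\mathcal{L}(M)^{\circ}$, with the induced order, is indeed isomorphic to $\mathcal{L}(M/K)^{\circ}$ with $N$ corresponding to $N/K$, and likewise that $\{x\mid x\subseteq L\}$ is $\mathcal{L}(L)^{\circ}$. Everything else is mechanical transcription through the dual dictionary, so I expect no obstacle beyond keeping the directions of the inequalities straight.
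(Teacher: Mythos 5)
Your proposal is correct and follows exactly the route the paper takes: the paper's entire proof is the remark that the lemma is obtained by applying the dual versions of Lemma \ref{lemma1}, Lemma \ref{lemma1_alg}, Corollary \ref{minimal} (and, for items (5) and (6), the distributivity lemma and Corollary \ref{distributive}) to $\mathcal{L}(M)^{\circ}$. You have merely spelled out the item-by-item bookkeeping that the paper leaves implicit, and your dictionary (meet $=+$, the interval above $K$ giving $\mathcal{L}(M/K)^{\circ}$, minimal in $\leq_{\circ}$ meaning maximal contained submodule) is accurate throughout.
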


\qquad Recall that a module whose lattice of submodules forms a chain is
called \emph{uniserial}. 

\begin{example}
Note that any submodule of a module $M$ is strongly hollow in $M$ if and
only if every submodule of $M$ is irreducible in $M$ if and only if $M$ is
uniserial. Moreover a submodule of a distributive module $M$ is strongly
hollow in $M$ if and only if it is hollow.
\end{example}

\begin{example}
A coalgebra $C$ over a field $K$ is called \emph{\ distributive} if its
lattice of left subcomodules is distributive. The left subcomodules of $C$
can be identified with the right $C^{\ast }=\mathrm{Hom}_{K}(C,K)$-modules
where $C^{\ast }$ becomes an algebra via the convolution product induced by
the comultiplication of $C$. Any distributive coalgebra $C$ decomposes as a
coproduct of chain coalgebras $C=\bigoplus_{I}C_{i}$ (see \cite{LompSantana}*%
{4.5}). In particular, any indecomposable subcomodule has to be a
subcomodule of one of the factors $C_{i}$. Thus, the left subcomodules that
are strongly hollow in $C$ are precisely the subcomodules of the coalgebras $%
C_{i}$ and hence are chain subcoalgebras themselves. In some cases the form
of chain coalgebras over a field can be explicitly stated (see \cite%
{LompSantana} for more details).
\end{example}

We are going now to apply Lemma \ref{characterisation_str_irr} and \ref%
{characterisation_str_irr_cor} to the dual lattice $\mathcal{L}^{\circ }(M)$%
. Recall that a \emph{supplement} of a submodule $K$ of $M$ is a submodule $N
$ that is minimal with respect to $N+K=M$. If the set of possible
supplements of $K$ is a singleton $\{N\}$, then $N$ is called the \emph{%
unique supplement} of $K$. This is equivalent to saying that $N$ is a
pseudo-complement of $K$ in $\mathcal{L}^{\circ }(M)$. Set $\Omega ^{\circ
}(N):=\{K\subsetneq M\mid N+K=M\}$. In general, supplements do not need to
be unique. Modules such that all submodules have unique supplements were
studied by Ganesan and Vanaja \cite{GanesanVanaja}. Weakly distributive
modules do have this property (see \cite{BuyukasikDemirci}). A submodule $U$
of a module $M$ is said to be \emph{weakly distributive} if $U=(U\cap
X)+(U\cap Y)$ for any submodules $X,Y$ with $X+Y=M$. Equivalently $U$ is a
weakly $+$-distributive element in the dual lattice $\mathcal{L}^{\circ }$
of the lattice $\mathcal{L}=(\mathcal{L}(M),\cap ,+,0,M)$.

\begin{prop}
\label{prop_unique_supplement} The following statements are equivalent for a
submodule $P$ of a module $M$ such that $P$ is not small in $M$.

\begin{enumerate}
\item[(a)] $P$ is strongly hollow in $M$

\item[(b)] $\Omega ^{\circ }(P)$ is closed under finite sums and any
submodule not containing $P$ is contained in a member of $\Omega ^{\circ }(P)
$;

\item[(c)] $P$ is a unique supplement of some $L\in\Omega^\circ(P)$ in $M$
such that $M/L$ is hollow.

\item[(d)] for any $L\in \Omega^\circ(P)$: $P$ is a unique supplement of
some $L\in\Omega^\circ(P)$ in $M$ and $M/L$ is hollow.

\item[(e)] $P$ is a hollow and weakly distributive submodule of $M$.
\end{enumerate}

If $M$ is finitely generated, then the following property is equivalent to $%
(a-e)$:

\begin{enumerate}
\item[(f)] $P$ is the unique supplement of a maximal submodule of $M$ in $M.$
\end{enumerate}
\end{prop}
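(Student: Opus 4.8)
The plan is to observe that the whole proposition is the module-theoretic translation of Lemma~\ref{characterisation_str_irr} (for the equivalence of (a)--(e)) and of Corollary~\ref{characterisation_str_irr_cor} (for the extra equivalence with (f)), both read in the dual lattice $\mathcal{L}^{\circ}=(\mathcal{L}(M),+,\cap,M,0)$. Almost no new argument is required: the task is to pin down the dictionary between $\mathcal{L}^{\circ}$-vocabulary and module-vocabulary and then quote those two results.

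First I would record the dictionary. In $\mathcal{L}^{\circ}$ the bottom element is $0_{\circ}=M$, the top is $1_{\circ}=0$, the meet is $\wedge_{\circ}=+$ and the join is $\vee_{\circ}=\cap$; the interval $[0_{\circ},Q]_{\circ}$ is the set of submodules of $M$ containing $Q$, taken with reverse inclusion, which is order-anti-isomorphic to $\mathcal{L}(M/Q)$. By the definition of strongly hollow, (a) is exactly the assertion that $P$ is strongly irreducible in $\mathcal{L}^{\circ}$; the set $\Omega^{\circ}(P)$ is precisely $\Omega(P)$ computed in $\mathcal{L}^{\circ}$, and the hypothesis that $P$ is not small in $M$ is exactly $\Omega^{\circ}(P)\neq\emptyset$, i.e.\ the standing assumption $\Omega(p)\neq\emptyset$ of Lemma~\ref{characterisation_str_irr}. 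It then remains to translate three notions: a pseudo-complement of $Q$ in $\mathcal{L}^{\circ}$ is a unique supplement of $Q$ in $M$ (as already noted in the discussion preceding the proposition); ``$[0_{\circ},Q]_{\circ}$ is uniform'' unwinds, using that uniformity means the bottom $M$ is irreducible for $\wedge_{\circ}=+$, to ``$M/Q$ is hollow''; and ``irreducible in $\mathcal{L}^{\circ}$'' and ``weakly $\wedge_{\circ}$-distributive in $\mathcal{L}^{\circ}$'' are by definition ``$P$ is a hollow submodule'' and ``$P$ is weakly distributive''.

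With this dictionary, conditions (a), (b), (c), (d), (e) of the proposition become, line by line, conditions (a), (b), (c), (d), (e) of Lemma~\ref{characterisation_str_irr} read in $\mathcal{L}^{\circ}$; in particular ``$K$ does not contain $P$'' is ``$K\notin[0_{\circ},P]_{\circ}$'' and ``$K$ is contained in a member of $\Omega^{\circ}(P)$'' is ``there is $y\in\Omega^{\circ}(P)$ with $y\leq_{\circ}K$'', while ``$\Omega^{\circ}(P)$ closed under finite sums'' is ``$(\Omega^{\circ}(P),\wedge_{\circ})$ is a semilattice''. Hence the equivalence of (a)--(e) is immediate from Lemma~\ref{characterisation_str_irr}. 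For (f), when $M$ is finitely generated it is a compact element of $\mathcal{L}(M)$, so $0_{\circ}=M$ is cocompact in $\mathcal{L}^{\circ}$ and Corollary~\ref{characterisation_str_irr_cor} applies; an atom of $\mathcal{L}^{\circ}$ is an element $a\neq M$ with $[0_{\circ},a]_{\circ}=\{M,a\}$, that is, a maximal submodule of $M$, and a pseudo-complement of such an atom is a unique supplement of a maximal submodule. Thus the corollary yields (a)$\Leftrightarrow$(f).

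The main obstacle is purely bookkeeping: getting every entry of the dictionary order-reversed correctly. The two places to be careful are the claim that ``$[0_{\circ},Q]_{\circ}$ uniform'' means $M/Q$ \emph{hollow} rather than $M/Q$ uniform, which rests on the fact that the meet in $\mathcal{L}^{\circ}$ is $+$ and that uniformity is irreducibility of the bottom element; and the identification of the pseudo-complement in $\mathcal{L}^{\circ}$ with the \emph{unique} supplement, that is, a submodule minimal with $P+Q=M$ which is moreover the only such minimal one, rather than with an arbitrary supplement. Once these two translations are justified, the proposition follows formally from the two cited results.
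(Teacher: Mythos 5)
Your proposal is correct and is essentially identical to the paper's proof, which simply says to apply Lemma~\ref{characterisation_str_irr} and Corollary~\ref{characterisation_str_irr_cor} to the dual lattice $\mathcal{L}(M)^{\circ}$ and notes that $\Omega^{\circ}(P)$ is $\Omega(P)$ computed there; your version merely spells out the order-reversal dictionary (unique supplement as pseudo-complement, $M/Q$ hollow as uniformity of the dual interval, maximal submodules as atoms, finite generation as cocompactness of the bottom element) that the paper leaves implicit.
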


\begin{proof}
Apply Lemmas \ref{characterisation_str_irr} and \ref%
{characterisation_str_irr_cor} to the dual lattice $\mathcal{L}^\circ(M)$.
In particular $\Omega^\circ(P)$ is equal to $\Omega(P)$ in $\mathcal{L}%
^\circ(M)$.
\end{proof}

The module $M$ is called \emph{weakly distributive} if every submodule of $M$
is weakly distributive. Clearly, if $P$ is a supplement of $Q$ in a weakly
distributive module $M$ and $L+Q=M$, then $P=(P\cap L)+(P\cap Q)=P\cap L$ as 
$P\cap Q\ll P$. Hence $P\cap L$ and $P$ is the least element in $\Omega
^{\circ }(Q)$, \emph{i.e.} $P$ is the unique supplement of $Q$ in $M$.

\begin{cor}
If any supplement submodule of a module $M$ is unique, then any hollow
submodule that is not small in $M$ is strongly hollow in $M$.
\end{cor}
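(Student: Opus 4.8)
The plan is to derive strong hollowness from the characterization already established in Proposition~\ref{prop_unique_supplement}. Since $P$ is assumed to be hollow and not small in $M$, the standing hypothesis of that proposition is met, so it suffices to verify any one of its equivalent conditions. The most economical route is condition~$(c)$: I would produce an element $L\in\Omega^{\circ}(P)$ such that $P$ is the unique supplement of $L$ and $M/L$ is hollow.

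First I would exploit that $P$ is not small, which says precisely that $\Omega^{\circ}(P)=\{K\subsetneq M\mid P+K=M\}\neq\emptyset$; fix a proper submodule $L$ with $P+L=M$. Since $L$ is proper, $P\not\subseteq L$, and hence $P\cap L\subsetneq P$. The pivotal step is to upgrade the mere relation $P+L=M$ to the statement that $P$ is a \emph{supplement} of $L$, i.e. minimal with respect to $P+L=M$. Given any $P'\subseteq P$ with $P'+L=M$, modularity of $\mathcal{L}(M)$ gives $P=P\cap(P'+L)=P'+(P\cap L)$; as $P$ is hollow and $P\cap L\neq P$, this forces $P'=P$. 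Thus $P$ is a supplement of $L$, and the hypothesis that every supplement submodule of $M$ is unique yields that $P$ is the unique supplement of $L$.

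It then remains to see that $M/L$ is hollow. Because $P$ is a supplement of $L$ we have $P\cap L\ll P$, and the second isomorphism theorem gives $M/L=(P+L)/L\cong P/(P\cap L)$. Being a quotient of the hollow module $P$, this module is again hollow. Hence all requirements of condition~$(c)$ of Proposition~\ref{prop_unique_supplement} are satisfied, and $P$ is strongly hollow in $M$.

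The one place demanding genuine care — the main obstacle — is the minimality argument that promotes $P+L=M$ to ``$P$ is a supplement of $L$'', since this is exactly where both the hollowness of $P$ and the modularity of the submodule lattice are indispensable, and it is this recognition of $P$ as a supplement that allows the uniqueness hypothesis to be invoked at all. Once that is in place, the ancillary facts that $P\cap L$ is small in $P$ and that a quotient of a hollow module is hollow are routine, so the hollowness of $M/L$ follows without further trouble.
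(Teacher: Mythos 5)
Your proof is correct and follows essentially the same route as the paper: pick a proper $L$ with $P+L=M$, use hollowness of $P$ to see that $P$ is a supplement of $L$ (the paper phrases this as $P\cap L\ll P$ since every proper submodule of a hollow module is small, while you verify minimality directly via modularity — the same fact in different clothing), invoke the uniqueness hypothesis, note $M/L\cong P/(P\cap L)$ is hollow, and conclude by Proposition~\ref{prop_unique_supplement}. No gaps.
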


\begin{proof}
If $P$ is a hollow submodule of $M$ that is not small in $M$, then there
exist a proper submodule $Q$ of $M$ such that $P+Q=M$. Since $P\cap Q$ is a
proper submodule of $P$, $P\cap Q\ll P$, \emph{i.e.} $P$ is a supplement of $%
Q$ in $M$. By hypothesis $P$ is unique. Moreover $M/Q\simeq P/P\cap Q$ is
hollow. By Proposition \ref{prop_unique_supplement}, $P$ is strongly hollow
in $M$.
\end{proof}

\begin{prop}
\label{stronglyhollowsubmodules} If $P$ is a strongly hollow submodule of $M$%
, then $P$ is a waist in $M$ or $P$ is a unique supplement of a submodule $Q$
in some intermediate submodule $P\subseteq M^{\prime }\subseteq M$ such that 
$M^{\prime }/Q$ is hollow.
\end{prop}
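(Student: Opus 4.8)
The plan is to recognize this proposition as the module-theoretic shadow of Theorem \ref{characterisation_str_irr_final} applied to the dual lattice $\mathcal{L}(M)^{\circ}=(\mathcal{L}(M),+,\cap,M,0)$. By definition $P$ is strongly hollow in $M$ precisely when $P$ is strongly irreducible in $\mathcal{L}(M)^{\circ}$, so Theorem \ref{characterisation_str_irr_final} applies verbatim to the complete lattice $\mathcal{L}(M)^{\circ}$ and yields the dichotomy: either $P$ is a waist in $\mathcal{L}(M)^{\circ}$, or there is an element $M'$ with $M' <_{\circ} P$ (where $<_{\circ}$ is the order of $\mathcal{L}(M)^{\circ}$) such that $P$ is a pseudo-complement in $[M',-]_{\circ}$ of some $Q\in[M',-]_{\circ}$ with the interval $[M',Q]_{\circ}$ being uniform. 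All that remains is to translate these dual notions back into the language of submodules.

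First I would dispose of the waist case. Being a waist means being comparable to every other element, and comparability is symmetric under reversal of the order; hence $P$ is a waist in $\mathcal{L}(M)^{\circ}$ if and only if it is a waist in $M$, which is the first alternative of the proposition.

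Next I would unwind the order reversals in the second case. The relation $M'<_{\circ}P$ means $P\subsetneq M'$ in $\mathcal{L}(M)$, so $M'$ is an intermediate submodule $P\subseteq M'\subseteq M$; and the up-set $[M',-]_{\circ}$ of the dual lattice is the down-set $[0,M']$ of $\mathcal{L}(M)$, whose members are the submodules $Q\subseteq M'$. Since the bottom element of $[M',-]_{\circ}$ in the dual order is $M'$ and $\wedge_{\circ}=+$, the condition that $P$ be the greatest element $x$ of $[M',-]_{\circ}$ with $Q\wedge_{\circ}x=M'$ reads as: $P$ is the least submodule of $M'$ under inclusion with $Q+P=M'$. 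This is exactly the statement that $P$ is the \emph{unique} supplement of $Q$ in $M'$.

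Finally I would translate the uniformity condition. The interval $[M',Q]_{\circ}$ corresponds to the interval $[Q,M']$ of $\mathcal{L}(M)$, and $[Q,M']\cong\mathcal{L}(M'/Q)$ via $X\mapsto X/Q$, sending $M'$ to the top element $M'/Q$. Uniformity of $[M',Q]_{\circ}$ means its dual-bottom $M'$ is irreducible in $[M',Q]_{\circ}\cong\mathcal{L}(M'/Q)^{\circ}$; since $M'$ corresponds to the bottom $M'/Q$ of $\mathcal{L}(M'/Q)^{\circ}$, this says $\mathcal{L}(M'/Q)^{\circ}$ is uniform, i.e. $M'/Q$ is hollow. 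Assembling the three translations gives the claimed dichotomy. I expect the only delicate point to be keeping the order reversals straight, in particular verifying that a pseudo-complement in the dual becomes the \emph{least} (hence unique) supplement rather than merely a supplement, and that the dualized uniform interval becomes ``$M'/Q$ hollow'' and not ``$M'/Q$ uniform''; everything else follows mechanically from Theorem \ref{characterisation_str_irr_final}.
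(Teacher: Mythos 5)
Your proposal is correct and is exactly the paper's argument: the paper's proof consists of the single line ``this follows from the dual statement of Theorem \ref{characterisation_str_irr_final}'', and you have simply carried out that dualization explicitly, with all the order reversals (waist, pseudo-complement $\leftrightarrow$ unique supplement, uniform interval $\leftrightarrow$ hollow quotient) handled correctly.
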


\begin{proof}
This follows from the dual statement of Theorem \ref%
{characterisation_str_irr_final}.
\end{proof}

From Corollary \ref{total} we get the following statement.

\begin{cor}
Let $M$ be a non-zero left $R$-module. Then every non-zero submodule of $M$
is strongly irreducible in $M$ if and only if every submodule of $M$ is
strongly hollow in $M$ if and only if $M$ is uniserial.
\end{cor}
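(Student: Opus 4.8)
The plan is to read off both equivalences from Corollary \ref{total}, applied once to the lattice of submodules and once to its dual. Recall that $\mathcal{L}(M)=(\mathcal{L}(M),\cap,+,0,M)$ is a complete modular lattice in which ``strongly irreducible in $M$'' means strongly irreducible as an element of the lower semilattice $(\mathcal{L}(M),\cap)$, while ``strongly hollow in $M$'' means strongly irreducible as an element of the dual lower semilattice $(\mathcal{L}(M)^{\circ},+)$. Moreover $M$ is uniserial exactly when its submodules form a chain, that is, when the containment order $\subseteq$ on $\mathcal{L}(M)$ is total; and since a total order remains total when reversed, $\subseteq$ is total on $\mathcal{L}(M)$ if and only if the induced order on $\mathcal{L}(M)^{\circ}$ is total. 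These three translations are the only structural facts needed before invoking Corollary \ref{total}.

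For the first equivalence I would apply Corollary \ref{total} verbatim to the lower semilattice $(\mathcal{L}(M),\cap)$: every submodule of $M$ is a strongly irreducible element if and only if $\subseteq$ is a total ordering, i.e.\ if and only if $M$ is uniserial. For the second equivalence I would apply the same corollary to the dual lower semilattice $(\mathcal{L}(M)^{\circ},+)$: every submodule of $M$ is strongly hollow precisely when the order of $\mathcal{L}(M)^{\circ}$ is total, which by the order-reversal observation above again amounts to $M$ being uniserial. Chaining these two statements through the common middle condition ``$M$ is uniserial'' then yields the asserted triple equivalence.

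The step I expect to require the most care is the treatment of the extreme elements, where the wording ``non-zero submodule'' must be reconciled with the ``all elements'' hypothesis of Corollary \ref{total}. The bottom element $0$ of $\mathcal{L}(M)$ is strongly irreducible exactly when $M$ is uniform, and dually the bottom element $M$ of $\mathcal{L}(M)^{\circ}$ is strongly hollow exactly when $M$ is hollow. Since a uniserial module is simultaneously uniform and hollow, both extreme elements are automatically covered once $M$ is known to be uniserial, so in that case there is no gap between ``every submodule'' and ``every non-zero submodule.'' Thus all of the substantive content sits inside Corollary \ref{total}, and the argument reduces to the two routine applications above together with the elementary remark that reversing a total order leaves it total; the only genuine checkpoint is verifying that the boundary submodules $0$ and $M$ behave as the corollary requires.
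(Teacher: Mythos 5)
Your overall route is exactly the paper's: the published proof of this corollary consists of the single line ``Apply Corollary \ref{total} to $\mathcal{L}(M)$ and to its dual $\mathcal{L}(M)^{\circ}$'', and your two applications of Corollary \ref{total} together with the remark that reversing a total order leaves it total are a faithful expansion of that. The equivalence of the strongly hollow condition with uniseriality is handled correctly, as is the implication from uniseriality to both irreducibility statements.

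The problem is your treatment of the word ``non-zero''. Your reconciliation (``both extreme elements are automatically covered once $M$ is known to be uniserial'') only disposes of the easy direction, from uniseriality to the irreducibility statements. In the converse direction --- every non-zero submodule strongly irreducible implies $M$ uniserial --- the proof of Corollary \ref{total} invokes the strong irreducibility of $c=a\cap b$, which may well be the zero submodule, and the stated hypothesis tells you nothing about $0$. This is not a removable technicality: for $M=\mathbb{Z}/6\mathbb{Z}$ over $\mathbb{Z}$ the non-zero submodules are $2M$, $3M$ and $M$, and each is strongly irreducible (the only submodules not contained in $3M$, say, are $2M$ and $M$, and any intersection of two of these is again not contained in $3M$; similarly for $2M$, and $M$ is the top element), yet $M$ is not uniserial since $2M$ and $3M$ are incomparable. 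So the implication you claim to have bridged is in fact false as stated; the first condition must be read as ``every submodule'' (equivalently, one must add that $0$ is strongly irreducible, i.e.\ that $M$ is uniform), which is what the paper's own one-line proof tacitly assumes. As written, your argument has a genuine gap exactly at the boundary case you singled out, and no amount of care will close it without changing the hypothesis.
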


\begin{proof}
Apply Corollary \ref{total} to $\mathcal{L}(M)$ and to its dual $\mathcal{L}%
(M)^\circ$.
\end{proof}

The following property should be compared to Stephenson's characterizations
of distributive modules which says that a module $M$ is distributive if and
only if $\mathrm{Hom}(P/(P\cap Q),Q/(P\cap Q))=0$ for any submodules $P,Q$
of $M$ (see \cite{Stephenson}).

\begin{lem}
\label{Lemma_stephenson} Let $P$ be submodules of a module $M$. If

\begin{enumerate}
\item $P$ is strongly irreducible in $M$ or

\item $P$ is strongly hollow in $M$,
\end{enumerate}

then $\mathrm{Hom}(P/(P\cap Q),Q/(P\cap Q))=0$ for any submodule $Q$ of $M$.
\end{lem}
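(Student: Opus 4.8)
The plan is to adapt the classical ``graph submodule'' construction underlying Stephenson's characterization of distributive modules. Write $N = P \cap Q$ and let $f \colon P/N \to Q/N$ be an arbitrary homomorphism; the goal in \emph{both} cases is to force $f = 0$. The central device is the auxiliary submodule
\begin{equation*}
A = \{\, p + q \mid p \in P,\ q \in Q,\ f(p+N) = q+N \,\} \subseteq P + Q,
\end{equation*}
which pulls the graph of $f$ back into $M$. A routine verification shows that $A$ is a submodule (closure under sums and scalars uses only additivity and $R$-linearity of $f$) and that $N \subseteq A$, taking $q=0$.

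Next I would establish the two structural identities on which everything turns: $A \cap Q = P \cap Q = N$ and $A + Q = P + Q$. For the first, any element of $A \cap Q$ has the form $p+q$ with both $p+q$ and $q$ lying in $Q$, which forces $p \in P \cap Q = N$, whence $f(p+N)=0$ and $q \in N$; the reverse inclusion is clear. For the second, every $p \in P$ admits a representative $q \in Q$ of $f(p+N)$, so that $p = (p+q) - q \in A + Q$, giving $P \subseteq A + Q$ and hence equality.

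With these identities in hand the two hypotheses are applied to the same submodule $A$ but through dual lattice relations. In case (1), since $A \cap Q = N \subseteq P$ and $P$ is strongly irreducible in $M$, either $Q \subseteq P$ --- in which case the target $Q/(P\cap Q)$ is zero and there is nothing to prove --- or $A \subseteq P$; in the latter case, for each $p \in P$ the chosen representative $q$ of $f(p+N)$ satisfies $p+q \in A \subseteq P$, so $q \in P \cap Q = N$ and $f(p+N)=0$, i.e. $f=0$. In case (2), since $P \subseteq P + Q = A + Q$ and $P$ is strongly hollow (that is, strongly irreducible in $\mathcal{L}(M)^{\circ}$), either $P \subseteq Q$ --- in which case the source $P/(P\cap Q)$ is zero --- or $P \subseteq A$, whence $P = A \cap P$. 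Since $A \cap P$ consists precisely of those $p \in P$ with $f(p+N)=0$, this again yields $f = 0$.

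The only genuine subtlety, and the step I would check most carefully, is the correct reading of the dual condition: strong hollowness of $P$ must be used in the form $P \subseteq K + L \Rightarrow P \subseteq K$ or $P \subseteq L$, and it must be paired with the \emph{sum} identity $A + Q = P + Q$, whereas the strongly irreducible case is paired with the \emph{intersection} identity $A \cap Q = P \cap Q$. Matching each alternative produced by (strong) irreducibility to the vanishing of $f$, together with verifying the two identities and that $A$ is a submodule, is where all the content lies; the remainder is formal.
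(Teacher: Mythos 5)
Your proof is correct and follows essentially the same route as the paper: the auxiliary submodule $A$ is exactly the paper's $L=\mathrm{Im}(\mu)$ (the image of the graph $\Lambda$ of $f$ under addition), the identities $A\cap Q=P\cap Q$ and $A+Q=P+Q$ are the paper's inclusions $L\cap Q\subseteq P\subseteq L+Q$, and the two cases are resolved by the same applications of strong irreducibility and strong hollowness. The only cosmetic difference is that you absorb the degenerate alternatives ($Q\subseteq P$, resp.\ $P\subseteq Q$) into the case analysis, whereas the paper dismisses $P\subseteq Q$ at the outset.
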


\begin{proof}
Note that if $P\subseteq Q$, then $P/(P\cap Q)=0$ and the conclusion is
trivially fulfilled. Hence we will assume $P\not\subseteq Q$. Let $%
f:P/(P\cap Q)\rightarrow Q/(P\cap Q)$ and denote by $\pi _{Q}:Q\rightarrow
Q/(P\cap Q)$ resp. $\pi _{P}:P\rightarrow P/(P\cap Q)$ the canonical
projections. Consider 
\begin{equation*}
\Lambda =\{(p,q)\in P\times Q\mid f(\pi _{P}(p))=\pi _{Q}(q)\}.
\end{equation*}%
Let $\mu :\Lambda \rightarrow M$ be the map $\mu (p,q)=p+q$ and set $L:=%
\mathrm{Im}(\mu )$. Note that 
\begin{equation*}
L\cap Q\subseteq P\subseteq L+Q,
\end{equation*}%
because if $p+q\in L\cap Q$, then $p\in P\cap Q$. Hence $0=f(\pi
_{P}(p))=\pi _{Q}(q)$ shows that $q\in P\cap Q$, \emph{i.e.} $L\cap
Q\subseteq P\cap Q\subseteq P$. The second equality follows because $\pi _{Q}
$ is surjective and hence for any $p$ there exists $q$ such that $(p,q)\in
\Lambda $. Thus $p=(p+q)-q\in L+Q$, \emph{i.e.} $P\subseteq L+Q$.

If $P$ is strongly hollow in $M$, then $P\subseteq L$. Hence for any $x\in P$
there exist $(p,q)\in \Lambda$ such that $x=p+q$. Thus $q=x-p\in P\cap Q$
and 
\begin{equation*}
0=\pi_Q(q) = f(\pi_P(p)) = f(\pi_P(x)).
\end{equation*}

If $P$ is strongly irreducible in $M$, then $L\subseteq P$. Hence, for any $%
x\in P$ there exists $(x,q)\in \Lambda $ with $x+q\in L\subseteq P$, \emph{%
i.e.} $q\in P\cap Q$ and $f(\pi _{P}(x))=\pi _{Q}(q)=0$.

In both cases, as $\pi _{P}$ is surjective, we conclude that $f=0$.
\end{proof}

\begin{example}
\label{example_maximal} Let $R=M_{2}(K)$ be the ring of $2\times 2$-matrices
over a field $K$. The left ideals $P=Re_{11}$ resp. $Q=Re_{22}$ consisting
of all matrices whose second resp. first column contains only zero entries
are maximal left ideals of $R$. Clearly $R=P\oplus Q$ and $P\simeq Q$. Thus
by Lemma \ref{Lemma_stephenson}, none of the maximal left ideals $P$ and $Q$
can be strongly irreducible or strongly hollow in $R$. This trivial example
illustrates that strongly irreducibility for non-commutative rings behaves
very differently from strongly irreducibility for commutative rings, where
maximal (and prime) ideals are always strongly irreducible.
\end{example}

Proposition \ref{strongKurosOre} and Lemma \ref{Lemma_stephenson} yield now
the following:

\begin{cor}
Let $M$ be a left $R$-module. If 
\begin{equation*}
P_{1}+\cdots +P_{n}=M=Q_{1}+\cdots +Q_{m}
\end{equation*}%
are two irredundant sums of strongly hollow submodules $P_{i}$ and $Q_{j}$
of $M$, then $n=m$ and there exists a permutation $\sigma \in S_{n}$ such
that $Q_{i}=P_{\sigma (i)}$ for all $i$. If $M$ can be written as a finite
sum of strongly hollow submodules, then $M$ has finite dual Goldie
dimension. Moreover, any strongly hollow submodule of $M$ is contained in
precisely one of the submodules $P_{i}$. Moreover the set $\{P_{1},\ldots
,P_{n}\}$ is unrelated in the sense that for all $i,j$: 
\begin{equation*}
\mathrm{Hom}_{R}(P_{i}/(P_{i}\cap P_{j}),P_{j}/(P_{i}\cap P_{j}))=0
\end{equation*}
\end{cor}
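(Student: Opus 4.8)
The plan is to transport the whole statement to the dual lattice $\mathcal{L}(M)^{\circ}$ of $\mathcal{L}(M)=(\mathcal{L}(M),\cap,+,0,M)$, which is again a complete modular lattice, whose least element is $M$, and in which the strongly hollow submodules are by definition exactly the strongly irreducible elements. Under this dictionary the hypothesis $\sum_i P_i=M=\sum_j Q_j$ says that we have two irredundant $\wedge$-representations of the least element $M$ of $\mathcal{L}(M)^{\circ}$ by strongly irreducible elements. Applying Proposition \ref{strongKurosOre} inside $\mathcal{L}(M)^{\circ}$ then yields at once $n=m$ together with a permutation $\sigma\in S_n$ such that $Q_i=P_{\sigma(i)}$, which is the first assertion.

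For the statement on dual Goldie dimension I would first discard superfluous summands from a finite sum $M=\sum_i P_i$ of strongly hollow submodules so as to obtain an irredundant one. Since a strongly hollow submodule is in particular irreducible in $\mathcal{L}(M)^{\circ}$ (Lemma \ref{lemma1}(2) read in the dual lattice), the result is an irredundant $\wedge$-representation of the least element of $\mathcal{L}(M)^{\circ}$ by irreducible elements. Hence $\mathcal{L}(M)^{\circ}$ has finite Kuros-Ore dimension, and by the result of \cite{GrzeszczukOkninskiPuczylowski} cited above its Goldie dimension is then finite; as the Goldie dimension of $\mathcal{L}(M)^{\circ}$ is exactly the dual Goldie dimension of $M$, this gives the claim.

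The unrelatedness is immediate: applying Lemma \ref{Lemma_stephenson}(2) to the strongly hollow submodule $P_i$ with $Q=P_j$ yields $\mathrm{Hom}_{R}(P_i/(P_i\cap P_j),P_j/(P_i\cap P_j))=0$ for all $i,j$. For the claim that a strongly hollow submodule $N$ is contained in precisely one $P_i$, existence is clear: since $N\subseteq A+B$ forces $N\subseteq A$ or $N\subseteq B$, applying this inductively to $N\subseteq M=P_1+\cdots+P_n$ gives $N\subseteq P_i$ for some $i$.

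The uniqueness is where I expect the real work to lie. Suppose $N\subseteq P_i\cap P_j$ with $i\neq j$ and set $\widehat{P}_i=\sum_{k\neq i}P_k$. Irredundancy gives $P_i+\widehat{P}_i=M$ with $\widehat{P}_i\neq M$, so $P_i\not\subseteq\widehat{P}_i$ and, as $P_i$ is hollow, the proper submodule $P_i\cap\widehat{P}_i$ of $P_i$ is superfluous, i.e. $P_i\cap\widehat{P}_i\ll P_i$. Since $P_j\subseteq\widehat{P}_i$ this forces $N\subseteq P_i\cap\widehat{P}_i\ll P_i$, whence $N\ll M$. If $N$ is not superfluous in $M$, this contradicts $N\ll M$, so $N$ lies in at most one $P_i$. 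The remaining, genuinely delicate, case is that of a superfluous strongly hollow submodule, which does occur in general (e.g. the proper submodules of the Pr\"{u}fer group). In that case the dual of Theorem \ref{characterisation_str_irr_final} applies in the essential-in-the-dual regime and shows that such an $N$ is a waist in each of $P_i$ and $P_j$; I would then try to combine this comparability with the unrelatedness just established and with the incomparability of $P_i$ and $P_j$ forced by irredundancy in order to exclude $N\subseteq P_i\cap P_j$. Isolating this superfluous case and closing it from the waist and unrelatedness data is the main obstacle I anticipate.
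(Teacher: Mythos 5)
Your handling of the first, second and fourth assertions is correct and is essentially what the paper does: the paper's entire ``proof'' is the sentence preceding the corollary, so the uniqueness of the irredundant decomposition is exactly Proposition \ref{strongKurosOre} read in the dual lattice $\mathcal{L}(M)^{\circ}$, the finiteness of the dual Goldie dimension is the cited Kuros--Ore/Goldie comparison of \cite{GrzeszczukOkninskiPuczylowski} applied to $\mathcal{L}(M)^{\circ}$ after passing to an irredundant subsum (your reduction to a minimal subfamily is fine since the index set is finite), and the unrelatedness is Lemma \ref{Lemma_stephenson}(2) with $Q=P_j$.

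The case you could not close cannot be closed: the third assertion is false as stated, and your argument isolates exactly where. Trivially $0$ is strongly hollow and lies in every $P_i$, but there are also non-zero counterexamples. Take a ring $R$ and a module $M$ whose submodule lattice is $0\subset S\subset P_1,P_2\subset M$ with $S$, $P_1/S$, $P_2/S$ simple and $P_1/S\not\cong P_2/S$; concretely, the three-dimensional representation $K\rightarrow K\leftarrow K$ (identity maps) of the quiver $1\rightarrow 3\leftarrow 2$, viewed as a module over the path algebra under the convention for which subrepresentations are submodules. Here $M=P_1+P_2$ is an irredundant sum; $P_1$ is strongly hollow because the only pairs $K,L$ with $P_1\subseteq K+L$ have $K+L\in\{P_1,M\}$, which forces $M\in\{K,L\}$ or $\{K,L\}=\{P_1,P_2\}$ or $P_1\in\{K,L\}$, and likewise for $P_2$; and $S$ is strongly hollow because it is contained in every non-zero submodule, so $S\subseteq K+L$ already implies $S\subseteq K$ or $S\subseteq L$. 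Yet $S\subseteq P_1\cap P_2$. (All other conclusions of the corollary do hold in this example, e.g. $\mathrm{Hom}(P_1/S,P_2/S)=0$.) The correct statement is precisely the one your partial argument proves: a strongly hollow submodule $N$ that is \emph{not small} in $M$ lies in exactly one $P_i$, since $N\subseteq P_i\cap P_j$ for $i\neq j$ gives $N\subseteq P_i\cap\widehat{P_i}\ll P_i$ and hence $N\ll M$. So you should add the non-smallness hypothesis rather than pursue the waist/essential analysis of the small case.
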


Recall that strongly hollow submodules of a module $M$ are supplements in $M$%
. In some cases supplements are direct summands. For a ring $R$, H. Z\"{o}%
schinger proved in \cite[Satz 2.3]{Zoschinger} that any left ideal which is
a supplement is generated by an idempotent if and only if whenever $P$ is a
projective module with $P/\mathrm{Rad}(P)$ being finitely generated, it is
the case that $P$ is finitely generated - a property that had been
considered by D. Lazard in his work \cite{Lazard} and in his honor a ring
satisfying this condition is called an $L$\emph{-ring}. Hence semiperfect
and rings with zero Jacobson radical are $L$-rings. S. J\o ndrup \cite%
{Jondrup} showed that every PI-ring, \emph{e.g.} commutative ring, is an $L$%
-ring. A ring $R$ is called \emph{local} if $R/\mathrm{Jac}(R)$ is division
ring.

\begin{cor}
Let $R$ be an $L$-ring. If $I$ is a strongly hollow left ideal that is not
contained in the Jacobson radical of $R$, then $I$ is generated by an
idempotent $e^{2}=e$ such that $eR(1-e)=0$. In particular, if $R$ is
commutative, then $R\simeq I\times R^{\prime }$ with $I$ being a local ring.
\end{cor}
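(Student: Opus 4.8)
The plan is to combine the structural description of strongly hollow submodules from Proposition \ref{prop_unique_supplement} with Z\"oschinger's theorem quoted above and with Lemma \ref{Lemma_stephenson}. First I would record that $I$ is not small in $R$: the Jacobson radical $\Jac{R}$ coincides with the radical of $R$ viewed as a left module over itself, hence contains every small left ideal. Since by hypothesis $I\not\subseteq \Jac{R}$, the ideal $I$ cannot be small. Therefore Proposition \ref{prop_unique_supplement} applies to $P=I$ in $M=R$, and part (c) of that proposition shows that the strongly hollow, non-small left ideal $I$ is a (unique) supplement of some left ideal $L$ of $R$.

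Next, because $R$ is an $L$-ring, Z\"oschinger's result guarantees that every left ideal which is a supplement is generated by an idempotent. Applied to $I$ this gives $I=Re$ with $e^{2}=e$. In particular one has the decomposition $R=Re\oplus R(1-e)$ of left $R$-modules, so that $Re\cap R(1-e)=0$.

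To obtain the relation $eR(1-e)=0$ I would invoke Lemma \ref{Lemma_stephenson} with $P=Re$, which is strongly hollow, and $Q=R(1-e)$. Since $P\cap Q=0$, the lemma yields $\mathrm{Hom}_{R}(Re,R(1-e))=0$. Using the standard identification $\mathrm{Hom}_{R}(Re,N)\cong eN$ — a homomorphism $f$ is determined by its value $f(e)=ef(e)\in eN$ — this Hom-group is isomorphic to $eR(1-e)$, whence $eR(1-e)=0$, as required. This computation is the technical heart of the argument, and the main obstacle; once the Hom identification is set up correctly the remaining verifications are routine.

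Finally, for the commutative case $e$ is automatically central, so the idempotent decomposition becomes a ring isomorphism $R\cong Re\times R(1-e)$; set $I=Re$ and $R^{\prime}=R(1-e)$. It remains to check that $I$ is a local ring. By an earlier lemma a strongly hollow submodule is a hollow module, so $I=Re$ is hollow, and being cyclic (generated by $e$) it is local in the module-theoretic sense. Since $e$ is central, the $R$-submodules of $Re$ coincide with the ideals of the ring $Re$, so $Re$ possesses a unique maximal ideal and is therefore a local ring, completing the argument.
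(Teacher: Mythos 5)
Your proof is correct and follows exactly the route the paper intends: the corollary is stated there without proof, but the surrounding discussion of Z\"oschinger's theorem and the references to Proposition \ref{prop_unique_supplement} and Lemma \ref{Lemma_stephenson} point to precisely your argument. The details you supply beyond the paper's hints --- that $I\not\subseteq \Jac{R}$ forces $I$ to be non-small in ${}_RR$ so that Proposition \ref{prop_unique_supplement}(c) yields the supplement, and the identification $\mathrm{Hom}_R(Re,R(1-e))\cong eR(1-e)$ --- are both correct, as is the passage to the ring decomposition and the locality of $Re$ in the commutative case.
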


\begin{example}
The condition $\mathrm{Hom}_{R}(P/(P\cap Q),Q/(P\cap Q))=0$ for all
submodules $Q$ of $M$ is in general not sufficient to guarantee $P$ to be
strongly irreducible resp. strongly hollow in $M$. This condition is
satisfied for any pair of submodules of a distributive module. Any Pr\"{u}%
fer domain is distributive as a module over itself, but not any ideal of a Pr%
\"{u}fer domain is irreducible resp. local. If $K$ is a field and $R=K[x]$,
then an ideal $I$ of $R$ is irreducible if and only if it is generated by an
irreducible polynomial. The only strongly hollow submodule, \emph{i.e.}
local ideal of $R$ is $0$ since any ideal $I=Rf$ with $0\neq f\in R$ can be
written as the sum of two ideals $I=Rxf+R(1-x)f$. To give another elementary
example, let $S$ be a simple left $R$-module over a ring $R$ such that $%
\mathrm{Hom}_{R}(S,R)=0$. Let $M=S\oplus R$ be the direct sum of $S$ and $R$%
. Then $S$ satisfies $\mathrm{Hom}_{R}(S,Q)=0$ for any submodule $Q$ of $M$
with $S\not\subseteq Q$, because $\mathrm{Soc}(Q)=\mathrm{Soc}(R)\cap Q$ and
hence $\mathrm{Hom}_{R}(S,Q)=\mathrm{Hom}_{R}(S,\mathrm{Soc}(R)\cap Q)=0$.
Writing $S=Rx$ for some $0\neq x\in S$ and defining $A=R(x,1)$ and $B=R(0,1)$
we see that $S\subseteq A+B$, but $S\not\subseteq A,B$. Hence, $S$ is not
strongly hollow in $M$.
\end{example}

We will examine the problematic of the last example in the following lemmas
where $M=E\oplus D$ and $E$ is a simple submodule which is strongly hollow
in $M$. Recall that the Wisbauer category $\sigma \lbrack D]$ of a module $D$
is the full subcategory of the category of left $R$-modules whose objects
are submodules of factor modules of direct sums of copies of $D$. It is not
difficult to see that if $E$ is a simple left $R$-module, then $E\in \sigma
\lbrack D]$ if and only if $\mathrm{Hom}_{R}(E,D/A)\neq 0$ for some
submodule $A$ of $D$.

\begin{lem}
\label{str_hollow_simple} Let $E$ be a simple left $R$-module and $D$ any
left $R$-module. Then $E$ is strongly hollow in $E\oplus D$ if and only if $%
E\not\in\sigma[D]$. 
\end{lem}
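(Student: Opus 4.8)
The plan is to prove both implications through their contrapositives, translating the assertion ``$E$ is not strongly hollow'' into the existence of a nonzero homomorphism from $E$ into a quotient of $D$, and conversely. Throughout I use the characterization recalled just before the lemma: since $E$ is simple, $E\in\sigma[D]$ if and only if $\mathrm{Hom}_R(E,D/A)\neq 0$ for some submodule $A$ of $D$. I also use that $E$ is strongly hollow in $M=E\oplus D$ precisely when $E\subseteq K+L$ forces $E\subseteq K$ or $E\subseteq L$ for all submodules $K,L\subseteq M$, and I write $p_D:M\to D$ for the projection along $E$.

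First I would dispatch the implication $E\in\sigma[D]\Rightarrow E$ is not strongly hollow. Choose a submodule $A\subseteq D$ and a nonzero $g:E\to D/A$; as $E$ is simple, $g$ is a monomorphism. With $q:D\to D/A$ the canonical map, form the ``graph'' submodule $N=\{(e,d)\in E\oplus D\mid g(e)=q(d)\}$ of $M$. Since $q$ is surjective, $N$ projects onto the first factor, so $N+(0\oplus D)=M\supseteq E$; and $(e,0)\in N$ forces $g(e)=0$, hence $e=0$, so $N\cap(E\oplus 0)=0$. Taking $K=N$ and $L=0\oplus D$ yields $E\subseteq K+L$ while $E\not\subseteq K$ and $E\not\subseteq L$, so $E$ is not strongly hollow. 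This is the routine direction.

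The converse --- $E$ not strongly hollow $\Rightarrow E\in\sigma[D]$ --- is where the real work lies. Suppose $K,L\subseteq M$ satisfy $E\subseteq K+L$ but $E\not\subseteq K$ and $E\not\subseteq L$. The key observation is that simplicity of $E$ forces $E\cap K=E\cap L=0$; consequently $\ker(p_D|_L)=L\cap(E\oplus 0)=L\cap E=0$, so $p_D$ restricts to a monomorphism on $L$ and identifies $L$ with the submodule $L'=p_D(L)$ of $D$. Meanwhile, composing $E\hookrightarrow M$ with $M\twoheadrightarrow M/K$ gives a monomorphism $E\to M/K$ (its kernel is $E\cap K=0$) whose image lies in $(K+L)/K\cong L/(L\cap K)$, because $E\subseteq K+L$. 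Transporting through the isomorphism $L/(L\cap K)\cong L'/B$, where $B=p_D(L\cap K)\subseteq L'\subseteq D$, and using that $L'/B$ is a submodule of $D/B$, I obtain a nonzero homomorphism $E\to D/B$. Hence $\mathrm{Hom}_R(E,D/B)\neq 0$ and $E\in\sigma[D]$.

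The main obstacle is precisely this converse: the nontrivial idea is to extract, from a failure of strong hollowness, an embedding of $E$ into the subquotient $L/(L\cap K)$ of $M$, and then to exploit the ambient decomposition $M=E\oplus D$ together with $E\cap L=0$ to reinterpret that subquotient as a subquotient $L'/B$ of $D$ itself --- which is exactly the data that detects membership in $\sigma[D]$. Once this identification is secured, the remainder is bookkeeping with the isomorphism theorems and the recalled description of $\sigma[D]$ for simple modules.
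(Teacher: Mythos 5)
Your proof is correct and follows essentially the same route as the paper: one direction via the graph submodule $N=\{(e,d)\mid g(e)=q(d)\}$ together with $0\oplus D$, and the converse by using $E\cap K=E\cap L=0$ to embed $E$ into a subquotient of $M$ and then, via the projection onto $D$, into a factor module of $D$ (the paper merely swaps the roles of $K$ and $L$). No gaps.
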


\begin{proof}
Write $M=E\oplus D$. Let $D/A$ be a non-zero factor of $D$ with $A\subset D$%
. For any non-zero $f:E\rightarrow D/A$ set $L=\{(x,y)\in E\oplus D\mid
f(x)=y+A\}$ which is a submodule of $M$. As any $(x,y)\in M$ can be written
as $(x,z)+(0,y-z)\in L+D$ for some $z\in D$ with $f(x)=z+A$, we see that $%
M=L+D$. However $E\not\subseteq D$ and since $f\neq 0$ there exists $x\in E$
with $f(x)=y+A$ for some $y\in D\setminus A$. Thus $(x,0)\not\in L$, \emph{%
i.e.} $E\not\subseteq L$. This shows that $\mathrm{Hom}_{R}(E,D/A)\neq 0$
for some submodule $A$ of $D$ implies $E$ is strongly hollow in $M$.

On the other hand, suppose that $E\subseteq K+L$ for some submodules $K,L$
of $M$. If $E\not\subseteq K$ then $E\cap K=0$. The projection $\pi
_{E}:M\rightarrow D$ yields that $K$ is isomorphic to a submodule $\pi
_{E}(K)$ of $D$. If $E\not\subseteq L$, then $E\cap L=0$ and we get the
following chain of homomorphisms 
\begin{equation*}
E\hookrightarrow K+L\longrightarrow (K+L)/L\simeq K/(K\cap L)\hookrightarrow
D/\pi _{E}(K\cap L)
\end{equation*}%
which yields a non-zero map from $E$ to a factor of $D$.
\end{proof}

\begin{lem}
\label{stronglyhollow_commutative} Let $R$ be commutative, $E$ a simple $R$%
-module and $D$ a finitely generated $R$-module. The following are
equivalent for $M=E\oplus D$:

\begin{enumerate}
\item[(a)] $E$ is a strongly hollow submodule of $M$;

\item[(b)] Any submodule $N$ of $M$ either contains $E$ or is contained in $%
D $;

\item[(c)] $R=\mathrm{Ann}(E) + \mathrm{Ann}(D)$.

\item[(d)] $\mathrm{Ann}(D)\not\subseteq \mathrm{Ann}(E)$.
\end{enumerate}
\end{lem}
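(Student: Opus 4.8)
The plan is to fix notation and split the four-way equivalence into three nearly independent pieces: the purely module-theoretic link (a)$\Leftrightarrow$(b), the trivial arithmetic link (c)$\Leftrightarrow$(d), and the substantial link (b)$\Leftrightarrow$(d). Since $R$ is commutative and $E$ is simple, I would write $\mathfrak{m}=\mathrm{Ann}(E)$, a maximal ideal with $E\cong R/\mathfrak{m}$, and record the two facts used repeatedly: inside $M=E\oplus D$ one has $E\cap D=0$ with $E\neq 0$, and property (a) means exactly that $E\subseteq K+L$ forces $E\subseteq K$ or $E\subseteq L$.

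For (a)$\Leftrightarrow$(b) I would argue entirely inside $M$, with no hypothesis on $D$. For (b)$\Rightarrow$(a): if $E\subseteq K+L$ yet $E\not\subseteq K$ and $E\not\subseteq L$, then (b) forces $K,L\subseteq D$, whence $E\subseteq K+L\subseteq D$, contradicting $E\cap D=0$. For (a)$\Rightarrow$(b): given $N$ with $E\not\subseteq N$, suppose $N\not\subseteq D$ and pick $(e,d)\in N$ with $e\neq 0$; then $(e,0)=(e,d)-(0,d)\in N+D$, and since $E$ is simple $R(e,0)=E$, so $E\subseteq N+D$. Strong hollowness then yields $E\subseteq N$ or $E\subseteq D$, both impossible, so $N\subseteq D$. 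The equivalence (c)$\Leftrightarrow$(d) is immediate from maximality of $\mathfrak{m}=\mathrm{Ann}(E)$: the ideal $\mathfrak{m}+\mathrm{Ann}(D)$ equals $R$ precisely when $\mathrm{Ann}(D)\not\subseteq\mathfrak{m}$, and equals $\mathfrak{m}$ otherwise.

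The heart of the argument is (b)$\Leftrightarrow$(d), and this is where finite generation of $D$ becomes essential. For (d)$\Rightarrow$(b) I would choose $r\in\mathrm{Ann}(D)\setminus\mathfrak{m}$; then $r$ annihilates $D$ but acts as a unit on $E\cong R/\mathfrak{m}$, so for any $N$ with $E\not\subseteq N$ we have $N\cap E=0$, and $r\cdot(e,d)=(re,0)\in N\cap E=0$ forces $e=0$, i.e. $N\subseteq D$. For the converse I would argue contrapositively: assuming $\mathrm{Ann}(D)\subseteq\mathfrak{m}$, the Nakayama determinant trick applied to the finitely generated module $D$ shows $D\neq\mathfrak{m}D$, since $D=\mathfrak{m}D$ would produce some $r\equiv 1\pmod{\mathfrak{m}}$ with $rD=0$, placing an element of $\mathrm{Ann}(D)$ outside $\mathfrak{m}$. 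Hence $D/\mathfrak{m}D$ is a nonzero $R/\mathfrak{m}$-vector space and admits a nonzero $R$-linear map $\phi\colon D\to E$. Its graph $N=\{(\phi(d),d)\mid d\in D\}$ satisfies $E\not\subseteq N$ (as $(x,0)\in N$ gives $x=\phi(0)=0$) and $N\not\subseteq D$ (as $\phi\neq 0$), violating (b).

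I expect this last implication to be the main obstacle, precisely because producing the nonzero quotient map $D\to E$ is exactly where finite generation cannot be dropped: without it, $D=\mathfrak{m}D$ can occur even when $\mathrm{Ann}(D)\subseteq\mathfrak{m}$. As an alternative route, (a)$\Leftrightarrow$(d) can be obtained directly from Lemma \ref{str_hollow_simple} by identifying the condition $E\in\sigma[D]$ with $\mathrm{Ann}(D)\subseteq\mathrm{Ann}(E)$, but this reformulation rests on the very same Nakayama step, so it relocates rather than removes the difficulty.
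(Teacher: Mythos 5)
Your proof is correct, but it takes a genuinely different route from the paper's on the one implication that carries all the weight. The paper proves the cycle $(a)\Rightarrow (d)\Rightarrow (c)\Rightarrow (b)\Rightarrow (a)$: the steps $(d)\Rightarrow(c)$, $(c)\Rightarrow(b)$ and $(b)\Rightarrow(a)$ are essentially identical to yours (its $(c)\Rightarrow(b)$ uses $1-p\in\mathrm{Ann}(D)$ with $p\in\mathrm{Ann}(E)$ exactly as your element $r$ does), but the crucial implication $(a)\Rightarrow(d)$ is obtained by citing Lemma \ref{str_hollow_simple} together with the identification $\sigma[D]=R/\mathrm{Ann}(D)\text{-Mod}$ from Wisbauer 15.4, so the role of finite generation of $D$ is hidden inside that reference. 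You instead prove $(b)\Rightarrow(d)$ self-containedly: Nakayama's lemma gives $D\neq\mathfrak{m}D$ when $\mathrm{Ann}(D)\subseteq\mathfrak{m}$, producing a nonzero map $\phi\colon D\to E$ whose graph violates $(b)$ — this makes explicit exactly where and why finite generation is needed, and your graph construction is in fact the same device used inside the paper's proof of Lemma \ref{str_hollow_simple}, just transplanted. A further point in your favour is that you establish $(a)\Leftrightarrow(b)$ with no hypothesis on $D$ or on $R$ beyond simplicity of $E$ (the paper only records $(b)\Rightarrow(a)$ directly), which cleanly isolates the module-theoretic content from the annihilator computation. Your closing remark that the $\sigma[D]$ route ``rests on the very same Nakayama step'' is slightly loose — the standard proof of $\sigma[D]=R/\mathrm{Ann}(D)\text{-Mod}$ uses the embedding $R/\mathrm{Ann}(D)\hookrightarrow D^{n}$ on generators rather than the determinant trick — but the substantive point, that finite generation cannot be dropped in either formulation, is right.
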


\begin{proof}
Let $P=\mathrm{Ann}(E)$ and $Q=\mathrm{Ann}(D)$. By \cite{Wisbauer}*{15.4}, $%
\sigma[D]=R/Q$-Mod.

$(a)\Rightarrow (d)$ As $E$ is strongly hollow in $M,$ it follows from Lemma %
\ref{str_hollow_simple} that $R/P\simeq E\not\in \sigma \lbrack D]=R/Q$-Mod.
Thus $Q\not\subseteq P$.

$(d)\Rightarrow (c)$ is trivial since $P$ is maximal.

$(c)\Rightarrow (b)$ By hypothesis there exist $p\in P$ such that $1-p\in Q$%
. If $N\subseteq M$ and $E\not\subseteq N$, then $N\cap E=0$. For any $0\neq
n=e+d\in N$, where $e\in E,d\in D$, we have $(1-p)n=(1-p)e\in E\cap N=0$.
Hence, $n=pn=pd\in D$, \emph{i.e.} $N\subseteq D$.

$(b)\Rightarrow (a)$ Suppose that $E\subseteq N+K$. Since $E\not\subseteq D$%
, either $N$ or $K$ is not contained in $D$. Hence either $N$ or $K$
contains $E$. 
\end{proof}

\begin{example}
One instance where Lemma \ref{stronglyhollow_commutative} fails is if $%
E\simeq D/V$ for some maximal submodule $V$ of $D$, since then $\mathrm{Ann}%
(D)\subseteq \mathrm{Ann}(E)$.
\end{example}

We illustrate our results by characterizing strongly hollow subgroups $P$ of
finite Abelian groups $A$.

\begin{example}
Recall from \cite{rangaswamy} that the hollow Abelian groups are precisely
the sugroups of the Pr\~{A}%
$\frac14$%
fer groups $\mathbb{Z}_{p^{\infty }}$. Thus any finite strongly hollow
submodule of an Abelian group is a cyclic $p$-group. First suppose that $A$
is a finite Abelian $p$-group for some prime number $p$. Let $Q$ be any
other subgroup of $A$, then $\mathrm{Hom}_{\mathbb{Z}}(P/(P\cap Q),Q/(Q\cap
P))=0$ by Lemma \ref{Lemma_stephenson}. On the other hand there exists
always a non-zero homomorphism between two non-zero finite Abelian $p$%
-groups. Hence $P\cap Q=P$ or $Q\cap P=Q$, i.e. $P\subseteq Q$ or $%
Q\subseteq P$. This shows that $P$ is a waist in $A$. Moreover, since $P$ is
hollow and finite, it is cyclic and thus uniserial by the Fundamental
Theorem of finitely generated Abelian groups. But then $P$ is uniform and
essential in $A$ showing that $A$ is uniform and hence also uniform. By the
Fundamental Theorem, $A$ is cyclic. In general if $P$ is a non-zero strongly
hollow subgroup of a finite Abelian group $A$, then $P$ is a finite $p$%
-group for some prime number $p$. Hence $P$ is contained in the $p$%
-component $A_{p}$ of $A$, which is cyclic as we just saw.

We will show now that a non-zero subgroup $P$ of a finite abelian group $A$
is strongly hollow in $A$ if and only if $P$ is a $p$-group and the $p$%
-component $A_p$ is cyclic. While we just saw the necessity we will show
that this condition is also sufficient. Suppose that $P$ is a non-zero $p$%
-subgroup of $A$ and that $A_p$ is cyclic (thus uniserial). There is nothing
to show if $A=A_p$. Hence assume $A\neq A_p$. As an Abelian group, $A$
decomposes into the direct sum of its $q$-components $A_q$ for prime numbers 
$q$. Let $B$ be the direct sum of all $q$-components $A_q$ with $q\neq p$.
Since $A=A_p\oplus B$ and $\mathbb{Z} = \mathrm{Ann}_{\mathbb{Z}}(A/A_p) + 
\mathrm{Ann}_{\mathbb{Z}}(A/B)$, we have that any subgroup $X$ of $A$
decomposes as $X=(X\cap A_p) + (X\cap B)$. If $P\subseteq X+Y$ for subgroups 
$X,Y$, then $P\subseteq (X\cap A_p) + (Y\cap A_p)$. Since $A_p$ is
uniserial, $P\subseteq X$ or $P\subseteq Y$. Thus $P$ is strongly hollow in $%
A$.

To give some explicit examples: let $p$ and $q$ be prime numbers and set $A=%
\mathbb{Z}_p^n\times \mathbb{Z}_q^m$. If $p=q$, then $0$ is the only
strongly hollow $\mathbb{Z}$-submodule of $A$. If $p\neq q$, then any
non-zero strongly hollow $\mathbb{Z}$-submodules of $A$ is either of the
form $p^k\mathbb{Z}_{p^n}$ for $0\leq k <n$ or of the form $q^k\mathbb{Z}%
_{q^m}$ for $0\leq k < m$.
\end{example}

%
%

\section{Strongly irreducible elements under localization}

The behavior of strongly irreducible ideals in commutative rings had been
studied in \cites{Atani,Azizi,HeinzerRatliffRush}. In this section we will
prove some of their results by lattice theoretical means.

The next (easy) lemma lies at the heart of the correspondence obtained in %
\cites{Atani,Azizi}.

\begin{lem}
\label{lemma2} Let $(L,\wedge )$ and $(L^{\prime },\wedge ^{\prime })$ be
two lower semilattices and $G:L\rightarrow L^{\prime }$ and $F:L^{\prime
}\rightarrow L$ order-preserving maps. Suppose that $G$ is a homomorphism of
semilattices such that $a\leq FG(a)$ for all $a\in L$. Let $p\in L$ with $%
GFG(p)=G(p)$. If $G(p)$ is strongly irreducible in $L^{\prime }$, then $%
FG(p) $ is strongly irreducible in $L$.
\end{lem}

\begin{proof}
Let $a\wedge b\leq FG(p)$. Then 
\begin{equation*}
G(a)\wedge G(b)=G(a\wedge b)\leq GFG(p)=G(p).
\end{equation*}%
As $G(p)$ is strongly irreducible, $G(a)\leq G(p)$ or $G(b)\leq G(p)$. Thus $%
a\leq FG(a)\leq FG(p)$ or $b\leq FG(b)\leq FG(p)$.
\end{proof}

Let $R$ be any ring, $S$ a multiplicatively closed subset of the center of $%
R $ containing $1$, but not containing $0$. Let $M$ be a left $R$-module and
denote by $M_{S}$ the set of equivalence classes of pairs $\frac{m}{s}%
:=(m,s)\in M\times S$ subject to the equivalence relation: $\frac{m}{s}=%
\frac{n}{t}\Leftrightarrow \exists u\in S:(mt-ns)u=0.$ The canonical map $%
\varphi :M\rightarrow M_{S}$ sends $m\in M$ to $\frac{m}{1}$. For $M=R$, the
localization of $R$ by $S$ becomes a ring and $M_{S}$ a left $R_{S}$-module.

The map $F:\mathcal{L}(M)\rightarrow \mathcal{L}(M_{S})$ sending a submodule 
$N$ to $F(N)=N_{S}=\{\frac{n}{s}\in M\mid n\in N,$ $s\in S\}$ preserves the
partial order of the lattice $\mathcal{L}(M)$ as well as sums (joins) and
intersections (meets). 

Consider the map $G:\mathcal{L}(M_S) \rightarrow \mathcal{L}(M)$ given by $%
G(B)=\varphi^{-1}(B) =: B\cap M$ for any $B\in \mathcal{L}(M_S)$, which is
order-preserving. Note that $G$ is a homomorphism of the lower semilattices $%
(\mathcal{L}(M_S),\cap)$ and $(\mathcal{L}(M),\cap)$. We have $FG=id$ and $%
id\leq GF$; in particular $FGF=F$ and $GFG=G$.

The next result generalizes \cite[2.6, 2.7]{Atani}.

\begin{lem}
Let $R$ be a ring and $S$ be a multiplicatively closed subset of the center
of $R$ containing $1$, but not containing $0$. Let $M$ be a left $R$-module.
A submodule $B$ of $M_S$ is strongly irreducible in $M_S$ if and only if $%
B\cap M$ is strongly irreducible in $M$.
\end{lem}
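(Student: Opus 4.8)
The plan is to deduce both implications directly from Lemma \ref{lemma2}, applied twice with the roles of $F$ and $G$ (and of the two lattices $\mathcal{L}(M)$ and $\mathcal{L}(M_S)$) interchanged. All of the real content is already packaged into that lemma, together with the relations $FG=\mathrm{id}$, $\mathrm{id}\leq GF$, $FGF=F$ and $GFG=G$ established in the paragraph just above. So the argument should be purely formal: the only point requiring care is to check, in each direction, that the concrete map instantiating the abstract semilattice homomorphism of Lemma \ref{lemma2} really preserves meets, and that the side identity on the chosen element $p$ holds.

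For the forward implication, suppose $B$ is strongly irreducible in $M_S$. I would apply Lemma \ref{lemma2} with $L=\mathcal{L}(M)$ and $L'=\mathcal{L}(M_S)$, taking the lemma's homomorphism to be our $F\colon\mathcal{L}(M)\to\mathcal{L}(M_S)$ (which preserves intersections) and the lemma's order-preserving map to be our $G\colon\mathcal{L}(M_S)\to\mathcal{L}(M)$. The required inequality $a\leq GF(a)$ is exactly $\mathrm{id}\leq GF$, and choosing $p=B\cap M=G(B)$ one computes $F(p)=FG(B)=B$ using $FG=\mathrm{id}$; the side condition translates into $FGF(p)=F(p)$, which holds because $FGF=F$. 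Since $F(p)=B$ is strongly irreducible in $\mathcal{L}(M_S)$ by hypothesis, Lemma \ref{lemma2} yields that $GF(p)=G(B)=B\cap M$ is strongly irreducible in $\mathcal{L}(M)$, as desired.

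For the converse, suppose $B\cap M$ is strongly irreducible in $M$. Here I would apply Lemma \ref{lemma2} with the lattices swapped, $L=\mathcal{L}(M_S)$ and $L'=\mathcal{L}(M)$, now taking the lemma's homomorphism to be our $G\colon\mathcal{L}(M_S)\to\mathcal{L}(M)$ (a homomorphism of lower semilattices by the setup) and the lemma's order-preserving map to be $F$. The inequality $a\leq FG(a)$ holds because $FG=\mathrm{id}$, and with $p=B$ the side condition is precisely $GFG(p)=G(p)$, i.e.\ $GFG=G$. Since $G(p)=B\cap M$ is strongly irreducible in $\mathcal{L}(M)$ by assumption, Lemma \ref{lemma2} gives that $FG(p)=FG(B)=B$ is strongly irreducible in $\mathcal{L}(M_S)$.

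I expect no genuine obstacle, since the substance lives entirely in Lemma \ref{lemma2}; the only thing to get right is the bookkeeping of which concrete map instantiates the abstract $F$ and $G$ of that lemma in each direction, together with confirming the meet-preservation of $F$ (respectively $G$) recorded in the setup and the four identities $FG=\mathrm{id}$, $\mathrm{id}\leq GF$, $FGF=F$, $GFG=G$.
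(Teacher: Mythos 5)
Your proof is correct and follows essentially the same route as the paper: both directions are obtained from Lemma \ref{lemma2} applied twice with the roles of $F$ and $G$ interchanged, using the identities $FG=\mathrm{id}$, $\mathrm{id}\leq GF$, $FGF=F$ and $GFG=G$. The paper's version is just terser; your explicit bookkeeping of which concrete map plays the lemma's abstract $F$ and $G$ in each direction is exactly the verification the paper leaves implicit.
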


\begin{proof}
By Lemma \ref{lemma2} we have for $B\in \mathcal{L}(M_S)$ that if $%
G(B)=B\cap M$ is strongly irreducible in $M$, then $FG(B)=B$ is strongly
irreducible in $M_S$. Reversing the roles of $F$ and $G$, we get that if $%
B=FG(B)$ is strongly irreducible in $M_S$, then $G(B)=B\cap M$ is strongly
irreducible in $M$.
\end{proof}

Considering the dual lattices $\mathcal{L}(M)^\circ$ and $\mathcal{L}%
(M_S)^\circ$, $F$ and $G$ are still order-preserving maps of upper
semilattices. Although $F$ preserves sums, and hence meets in $\mathcal{L}%
(M)^\circ$, $G$ might not preserve sums and might not establish a
homomorphism of lower semilattices $\mathcal{L}(M_S)^\circ \rightarrow 
\mathcal{L}(M)^\circ$.

\begin{lem}
Let $S$ be a multiplicatively closed subset of $R\setminus \{0\}$ and $B$ a
submodule of $M_S$.

\begin{enumerate}
\item If $B$ is strongly hollow in $M_S$, then $B\cap M$ is strongly hollow
in $M$.

\item If $G$ preserves sums and $B \cap M$ is strongly hollow in $M$, then $%
B $ is strongly hollow in $M_S$.
\end{enumerate}
\end{lem}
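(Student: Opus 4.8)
The statement is the dual counterpart of the preceding localization lemma for strongly irreducible submodules, so the plan is to rerun those arguments in the dual lattices $\mathcal{L}(M)^{\circ}$ and $\mathcal{L}(M_{S})^{\circ}$. Throughout I will use that $F$ is order-preserving and preserves sums, that $G$ is order-preserving and preserves intersections, that $FG=\mathrm{id}$, and that $\mathrm{id}\leq GF$, i.e. $K\subseteq K_{S}\cap M$ for every submodule $K$ of $M$. Recall that $N$ is strongly hollow in $M$ precisely when $N\subseteq X+Y$ forces $N\subseteq X$ or $N\subseteq Y$; in the dual picture this is exactly strong irreducibility of $N$ in $\mathcal{L}(M)^{\circ}$, where the meet is the sum $+$. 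For the strongly irreducible case both directions went through because $F$ and $G$ both preserve intersections and so are homomorphisms of the original lower semilattices. The crucial structural point, already noted before the lemma, is that on passing to the duals the roles of $F$ and $G$ are swapped: $F$ now preserves the dual meet and remains a homomorphism of the dual lower semilattices, while $G$ need not be; this is what splits the equivalence into the two one-sided statements.

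For part (2) I would apply Lemma \ref{lemma2} in the dual with $G$ as the semilattice homomorphism and $F$ as its order-preserving partner. Since $G$ is assumed to preserve sums, it is a homomorphism of the lower semilattices $(\mathcal{L}(M_{S})^{\circ},+)$ and $(\mathcal{L}(M)^{\circ},+)$; the inequality required by the lemma holds with equality because $FG=\mathrm{id}$, and $GFG=G$ holds as well. Hence strong irreducibility of $G(B)=B\cap M$ in $\mathcal{L}(M)^{\circ}$ transfers to $FG(B)=B$ in $\mathcal{L}(M_{S})^{\circ}$, which is the assertion. Unwound, the argument is simply: if $B\subseteq U+V$ in $M_{S}$, applying $G$ and using that $G$ preserves sums gives $B\cap M\subseteq(U\cap M)+(V\cap M)$; strong hollowness of $B\cap M$ yields, say, $B\cap M\subseteq U\cap M$; applying $F$ and $FG=\mathrm{id}$ then gives $B=F(B\cap M)\subseteq(U\cap M)_{S}=U$.

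For part (1) the natural route is the direct localization argument using $F$, since $G$ is no longer a homomorphism. Given $B\cap M\subseteq K+L$ in $M$, I apply $F$ and use $F(B\cap M)=FG(B)=B$ together with $F(K+L)=K_{S}+L_{S}$ to get $B\subseteq K_{S}+L_{S}$; strong hollowness of $B$ then gives $B\subseteq K_{S}$, say, and contracting along $G$ yields $B\cap M\subseteq K_{S}\cap M$. I expect the last step to be the main obstacle: what I actually want is $B\cap M\subseteq K$, but the argument only delivers $B\cap M\subseteq K_{S}\cap M=GF(K)$, and the $S$-saturation $GF(K)$ can strictly contain $K$. This gap is precisely the inequality $GF(a)\leq a$ that Lemma \ref{lemma2} would demand in this orientation and which is unavailable here, because $F$ is the only homomorphism at hand and the relation $\mathrm{id}\leq GF$ points the opposite way. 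To close it I would try to restrict the test submodules $K,L$ to $S$-saturated ones, for which $GF(K)=K$, and argue that strong hollowness may be checked on such submodules; this reduction, rather than the localization itself, is the delicate point on which the proof of part (1) hinges.
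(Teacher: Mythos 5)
Your treatment of part (2) is correct and coincides with the paper's: once $G$ is assumed to preserve sums it becomes a homomorphism of the dual lower semilattices $(\mathcal{L}(M_S)^{\circ},+)\to(\mathcal{L}(M)^{\circ},+)$, the identities $FG=\mathrm{id}$ and $GFG=G$ supply the remaining hypotheses of Lemma \ref{lemma2}, and strong hollowness passes from $B\cap M$ to $B$.

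Part (1), however, you do not prove, and the obstacle you isolate is a genuine one. The direct argument yields only $B\cap M\subseteq K_S\cap M=GF(K)$, and the missing inequality $GF(K)\subseteq K$ is exactly the hypothesis ``$a\leq FG(a)$'' of Lemma \ref{lemma2} read in the dual order; it fails for every non-$S$-saturated $K$. Your proposed repair --- checking strong hollowness only on saturated submodules --- cannot be carried out in general: the dual of Lemma \ref{lemma1_alg} would require every submodule of $M$ to be an intersection of $S$-saturated ones, which already fails for $2\mathbb{Z}\subseteq\mathbb{Z}$ with $S=\{2^{n}\}$. You should be aware that the paper's own proof of (1) is the bare assertion that Lemma \ref{lemma2} applies ``interchanging the roles of $F$ and $G$'', and it passes over precisely this unmet hypothesis; in fact, taken literally, (1) fails for $B=0$ whenever the $S$-torsion submodule $0\cap M=\mathrm{Ker}(\varphi)$ is not strongly hollow in $M$ (take $M=\mathbb{Z}/2\mathbb{Z}\oplus\mathbb{Z}$ and $S=\{2^{n}\}$; then $B=0$ is trivially strongly hollow in $M_S$, while $B\cap M=\mathbb{Z}/2\mathbb{Z}\oplus 0$ is not strongly hollow in $M$ by the paper's own example of a simple module $E$ with $\mathrm{Hom}(E,R)=0$ sitting inside $E\oplus R$). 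So your diagnosis is accurate and indeed sharper than the published argument, but as it stands your proposal leaves part (1) unproved, and closing the gap would require an additional hypothesis (some form of $S$-saturation of the test submodules) that neither you nor the paper supplies.
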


\begin{proof}
Consider $F$ resp. $G$ as a order-preserving map $\mathcal{L}(M)^\circ
\rightarrow \mathcal{L}(M_S)^\circ$ resp. $\mathcal{L}(M_S)^\circ
\rightarrow \mathcal{L}(M)^\circ$. (1) $F$ is a homomorphism of the lower
semilattices $(\mathcal{L}(M)^\circ, +)$ and $(\mathcal{L}(M_S)^\circ, +)$;
hence Lemma \ref{lemma2} applies (interchanging the roles of $F$ and $G$).
(2) If $G$ preserves sums, it is a homomorphism of lower semilattices $(%
\mathcal{L}(M)^\circ, +)$ and $(\mathcal{L}(M_S)^\circ, +)$; hence Lemma \ref%
{lemma2} applies.
\end{proof}

\section{The lattice of hereditary torsion theories}

The set $R-\mathrm{Tors}$ of hereditary torsion theories in the category of
left $R$-modules, is a distributive lattice. Hence strongly irreducible and
irreducible elements coincide. Irreducible elements in this lattice have
been characterized in \cite{Golan}*{Chapter 32}. The aim of this section is
to characterize the dual notion of strongly irreducible in this lattice. We
say that a torsion theory $\tau \in R-\mathrm{Tors}$ is {\emph{(completely)
hollow}} in $R-\mathrm{Tors}$ if $\tau $ is (completely) irreducible in the
dual lattice $R-\mathrm{Tors}^{\circ }$.

For all unexplained notions of torsion theory we refer the reader to \cite%
{Golan}. We denote an element $\tau \in R-\mathrm{Tors}$ by some Greek
letter representing either the injective module $E_{\tau }$ that cogenerates
its torsion free class, or its Gabriel filter $\mathcal{G}_{\tau }$ or its
torsion pair $(\mathbb{T}_{\tau },\mathbb{F}_{\tau })$. The partial order in 
$R-\mathrm{Tors}$ is defined by $\tau \leq \sigma $ if and only if $\mathbb{T%
}_{\tau }\subseteq \mathbb{T}_{\sigma }$, for any $\tau ,\sigma \in R$-$%
\mathrm{Tors}$. Alternatively one could have said that $E_{\sigma }\leq
E_{\tau }$. For the zero module $E=0$, one has $\mathrm{Cog}(0)=\{0\}$. Thus
the torsion class associated to $0$ is the whole category $R$-Mod and hence
is the largest torsion theory in $R-\mathrm{Tors}$ denoted by $\mathbf{1}$.
For any injective cogenerator $Q$ of $R$-Mod one has $\mathrm{Cog}(Q)=R$%
-Mod. Thus the torsion class associated to an injective cogenerator contains
just the zero module and hence is the least torsion theory in $R-\mathrm{Tors%
}$ denoted by $\mathbf{0}$. If $U$ is a set of torsion theories with torsion
pairs $(\mathbb{T}_{\tau },\mathbb{F}_{\tau })$ for all $\tau \in U$, then $%
\bigcap_{\tau \in U}\mathbb{T}_{\tau }$ is again a torsion class and $%
\bigcap_{\tau \in U}\mathbb{F}_{\tau }$ is again a torsionfree class (see 
\cite{Golan}*{2.5,2.6}).

\begin{defn}
For any subset $U$ of $R-\mathrm{Tors}$ one defines

\begin{enumerate}
\item $\bigwedge U$ is the torsion theory associated to the torsion class $%
\bigcap_{\tau \in U}\mathbb{T}_{\tau }$.

\item $\bigvee U$ is the torsion theory associated to the torsion free class 
$\bigcap_{\tau \in U}\mathbb{F}_{\tau }$.
\end{enumerate}

By convention we set $\wedge \emptyset = \mathbf{1}$ and $\vee \emptyset = 
\mathbf{0}$
\end{defn}

\begin{theorem}[\protect\cite{Golan}*{29.1}]
$R-\mathrm{Tors}$ is a frame, \emph{i.e.} $(R-\mathrm{Tors},\wedge ,\vee ,%
\mathbf{1},\mathbf{0})$ is a complete distributive lattice such that for any 
$U\subseteq R-\mathrm{Tors}$ and $\tau \in R-\mathrm{Tors}$ one has 
\begin{equation*}
\tau \wedge \left( \bigvee U\right) =\bigvee_{\sigma \in U}(\tau \wedge
\sigma ).
\end{equation*}
\end{theorem}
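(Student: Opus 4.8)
The plan is to verify, in turn, the three ingredients of a frame: that $\RT$ is a complete lattice, that the operations of the preceding definition really are infima and suprema, and finally the infinite distributive law, which is the heart of the matter. For completeness I would use that a hereditary torsion theory is determined by a torsion class $\tT_\tau$ closed under submodules, quotients, extensions and coproducts, and by a torsionfree class $\tF_\tau$ closed under submodules, products, extensions and injective hulls. Hence for any $U\subseteq\RT$ the intersection $\bigcap_{\sigma\in U}\tT_\sigma$ is again a torsion class and $\bigcap_{\sigma\in U}\tF_\sigma$ again a torsionfree class; these define $\bigwedge U$ and $\bigvee U$. Directly from $\tau\leq\sigma\Leftrightarrow\tT_\tau\subseteq\tT_\sigma$ one checks that $\bigwedge U$ is the greatest lower bound and that the theory with torsionfree class $\bigcap_{\sigma}\tF_\sigma$ is the least upper bound, so $\RT$ is complete with bounds $\bzero$ and $\bone$.

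The convenient tool for the distributive law is the idempotent torsion radical $t_\tau$, where $t_\tau(M)$ is the largest $\tau$-torsion submodule of $M$; recall $M\in\tT_\tau$ iff $t_\tau(M)=M$, $M\in\tF_\tau$ iff $t_\tau(M)=0$, and $\tau\leq\sigma$ iff $t_\tau(M)\subseteq t_\sigma(M)$ for all $M$. Two formulas drive the argument. For meets, since the relevant theories are hereditary the submodule $t_\tau(M)\cap t_\sigma(M)$ is both $\tau$- and $\sigma$-torsion and contains every such submodule, whence $t_{\tau\wedge\sigma}(M)=t_\tau(M)\cap t_\sigma(M)$. For joins, $t_{\bigvee U}$ is obtained from the preradical $r(M)=\sum_{\sigma\in U}t_\sigma(M)$ by transfinite iteration: $r_0=0$, $r_{\alpha+1}(M)/r_\alpha(M)=r(M/r_\alpha(M))$, and $r_\lambda(M)=\bigcup_{\alpha<\lambda}r_\alpha(M)$ at limit ordinals, the process stabilising at $t_{\bigvee U}$.

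With these in hand I would prove $\tau\wedge(\bigvee U)=\bigvee_{\sigma\in U}(\tau\wedge\sigma)$. One inequality is formal: each $\tau\wedge\sigma$ lies below $\tau$ and below $\sigma\leq\bigvee U$, so $\bigvee_{\sigma}(\tau\wedge\sigma)\leq\tau\wedge(\bigvee U)$. For the reverse it suffices to show that every module in $\tT_\tau\cap\tT_{\bigvee U}$ already lies in the torsion class of $\lambda:=\bigvee_{\sigma}(\tau\wedge\sigma)$, since the torsion submodule $t_\tau(M)\cap t_{\bigvee U}(M)$ on the left is of this form. So assume $M$ is simultaneously $\tau$-torsion and $\bigvee U$-torsion. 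As $\tau$ is hereditary, its torsion class is closed under submodules and quotients, so every subquotient of $M$ — in particular each $M/r_\alpha(M)$ occurring in the iteration — is again $\tau$-torsion; on a $\tau$-torsion module the meet formula gives $t_\sigma=t_\tau\cap t_\sigma=t_{\tau\wedge\sigma}$. Hence the iteration computing $t_{\bigvee U}(M)$ from $\sum_\sigma t_\sigma$ agrees stagewise with the one computing $t_\lambda(M)$ from $\sum_\sigma t_{\tau\wedge\sigma}$, the limit stages matching because both form unions. Since $M=t_{\bigvee U}(M)$, we get $M=t_\lambda(M)$, giving the reverse inequality; taking $U=\{\sigma_1,\sigma_2\}$ yields ordinary distributivity, so $\RT$ is a frame.

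The step I expect to be the main obstacle is precisely the transfinite-iteration description of $t_{\bigvee U}$ together with the verification that heredity of $\tau$ forces the two iterations to coincide at every ordinal stage, the limit stages included; by contrast the completeness claims and the meet formula reduce to the routine closure properties of hereditary torsion and torsionfree classes.
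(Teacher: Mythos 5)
The paper does not prove this statement at all: it is quoted verbatim from Golan's book (\cite{Golan}*{29.1}) and used as a black box, so there is no internal proof to compare yours against. Judged on its own, your argument is essentially correct and follows one of the standard routes (via torsion radicals and transfinite iteration) rather than, say, Golan's development through Gabriel filters. The completeness claims and the identity $t_{\tau\wedge\sigma}(M)=t_\tau(M)\cap t_\sigma(M)$ are fine --- note that heredity is genuinely needed here (to see the intersection is $\sigma$- and $\tau$-torsion), whereas the closure of $\tT_\tau$ under quotients, which is what you actually invoke for the modules $M/r_\alpha(M)$, holds for any torsion class. The two points you flag as the main obstacles do go through: the stabilised iteration $\bar r$ of $r=\sum_{\sigma\in U}t_\sigma$ computes $t_{\bigvee U}$ because a module with $\bar r(M)=M$ admits no nonzero map to $\bigcap_{\sigma}\tF_\sigma$ (by transfinite induction on $\alpha$), while if $\bar r(M)\neq M$ then $M/\bar r(M)$ is a nonzero quotient lying in $\bigcap_\sigma\tF_\sigma$; and on a $\tau$-torsion module $N$ one has $t_{\tau\wedge\sigma}(N)=N\cap t_\sigma(N)=t_\sigma(N)$, which forces the two iterations to agree at every stage. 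It would be worth writing out that induction explicitly, but no idea is missing. What your approach buys is a self-contained, module-theoretic proof of the frame law; what the citation buys the authors is brevity, since the result is classical and the paper only needs distributivity as input for its Corollary on irreducible versus strongly irreducible elements of $\RT$.
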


In particular $R-\mathrm{Tors}$ is a distributive lattice. The operations $%
\bigwedge $ and $\bigvee $ allow to attach two torsion theories to a module $%
M$: the least torsion theory $\xi (M)$ with respect to which $M$ is torsion
and the greatest torsion theory $\chi (M)$ with respect to which $M$ is
torsionfree and 
\begin{equation*}
\xi (M)=\bigwedge \{\tau \in {R}\text{{-}}{\mathrm{Tors}}\mid M\in \mathbb{T}%
_{\tau }\}.\qquad \chi (M)=\bigvee \{\tau \in R\text{-}\mathrm{Tors}\mid
M\in \mathbb{F}_{\tau }\}.
\end{equation*}

\begin{lem}
\label{rel_xi_chi} Let $M$ be a left $R$-module and $\tau \in R$-$\mathrm{%
Tors}$. Then $\xi (M)\not\leq \tau $ if and only if $\tau \leq \chi (M/N)$
for some proper $N\subset M$.
\end{lem}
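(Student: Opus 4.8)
The plan is to translate both sides of the asserted equivalence into purely module-theoretic statements about the $\tau$-torsion and $\tau$-torsionfreeness of $M$ and its quotients, using the defining (universal) properties of $\xi$ and $\chi$, and then to argue the resulting equivalence directly.

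First I would record the two reformulations that do all the work. By definition $\xi(M)=\bigwedge\{\tau\mid M\in\mathbb{T}_{\tau}\}$ is the least torsion theory for which $M$ is torsion, so $M\in\mathbb{T}_{\xi(M)}$ and, for any $\tau$, the inequality $\xi(M)\leq\tau$ holds if and only if $M\in\mathbb{T}_{\tau}$. Hence $\xi(M)\not\leq\tau$ is equivalent to saying that $M$ is \emph{not} $\tau$-torsion. Dually, $\chi(M/N)=\bigvee\{\tau\mid M/N\in\mathbb{F}_{\tau}\}$ is the greatest torsion theory for which $M/N$ is torsionfree, so $\tau\leq\chi(M/N)$ is equivalent to $M/N\in\mathbb{F}_{\tau}$. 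Moreover $N\subset M$ proper means exactly $M/N\neq 0$. Thus the lemma reduces to the claim that $M$ is not $\tau$-torsion if and only if there is a nonzero quotient $M/N$ of $M$ that is $\tau$-torsionfree.

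For the forward direction I would take $N:=T_{\tau}(M)$, the $\tau$-torsion submodule of $M$ (the largest submodule lying in $\mathbb{T}_{\tau}$). If $M$ is not $\tau$-torsion then $T_{\tau}(M)\neq M$, so $N$ is proper, and by the standard property of torsion theories the quotient $M/T_{\tau}(M)$ is $\tau$-torsionfree, i.e.\ $M/N\in\mathbb{F}_{\tau}$; this gives $\tau\leq\chi(M/N)$. For the converse, suppose $\tau\leq\chi(M/N)$ for some proper $N$, so $M/N\in\mathbb{F}_{\tau}$ and $M/N\neq 0$. Were $M$ itself $\tau$-torsion, its quotient $M/N$ would lie in $\mathbb{T}_{\tau}$ as well (torsion classes are closed under quotients), forcing $M/N\in\mathbb{T}_{\tau}\cap\mathbb{F}_{\tau}=\{0\}$, contradicting $M/N\neq 0$. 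Hence $M$ is not $\tau$-torsion, i.e.\ $\xi(M)\not\leq\tau$.

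I do not expect a genuinely hard step here: the argument is essentially a dictionary translation via the universal properties of $\xi$ and $\chi$ together with two elementary facts about any hereditary torsion theory, namely that $M/T_{\tau}(M)$ is torsionfree and that $\mathbb{T}_{\tau}\cap\mathbb{F}_{\tau}=\{0\}$. The only point demanding care is making sure the "proper $N$" condition is correctly matched with the nonvanishing of the quotient $M/N$, which is what prevents a torsion $M$ from producing a torsionfree nonzero quotient.
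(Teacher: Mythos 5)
Your proof is correct and follows essentially the same route as the paper: both directions hinge on taking $N=\tau(M)$ (the $\tau$-torsion submodule) for the forward implication and on quotient-closure of $\mathbb{T}_{\tau}$ together with $\mathbb{T}_{\tau}\cap\mathbb{F}_{\tau}=\{0\}$ for the converse. Your write-up merely makes explicit the translation steps and the role of properness of $N$ that the paper's terser proof leaves implicit.
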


\begin{proof}
If $\xi (M)\not\leq \tau $, then $M\not\in \mathbb{T}_{\tau }$. Hence $%
N=\tau (M)\subsetneq M$ with $M/N\in \mathbb{F}_{\tau }$, \emph{i.e.} $\tau
\leq \chi (M/N)$. On the other hand, if $\tau \leq \chi (M/N)$, then $%
M/N\not\in \mathbb{T}_{\tau }$ and hence $\xi (M)\not\leq \tau $.
\end{proof}

\begin{theorem}
Let $M$ be a left $R$-module. $\xi(M)$ is hollow in $R-\mathrm{Tors}$ if and
only if $\{ \chi(M/N) \mid N\subset M\}$ is directed.
\end{theorem}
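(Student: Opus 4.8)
The plan is to first replace the notion `hollow' by the more manageable condition of being strongly irreducible in the dual lattice. Since $R-\mathrm{Tors}$ is distributive, so is its dual $R-\mathrm{Tors}^{\circ }$, and hence by Corollary \ref{distributive} applied in $R-\mathrm{Tors}^{\circ }$ together with Lemma \ref{lemma1}(2), an element is hollow in $R-\mathrm{Tors}$ if and only if it is strongly irreducible in $R-\mathrm{Tors}^{\circ }$. By the description in Example \ref{zariski2} this means that $\xi (M)$ is hollow precisely when, for all $\tau ,\sigma \in R-\mathrm{Tors}$,
\[
\xi (M)\leq \tau \vee \sigma \qquad \Rightarrow \qquad \xi (M)\leq \tau \mbox{ or }\xi (M)\leq \sigma .
\]
Thus it suffices to show that this last condition is equivalent to the (upward) directedness of $\mathcal{S}:=\{\chi (M/N)\mid N\subset M\}$.

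Two observations drive the argument. First, taking $\tau =\chi (M/N)$ in Lemma \ref{rel_xi_chi} and using $N$ itself as the witness shows that $\xi (M)\not\leq \chi (M/N)$ for every proper $N\subset M$. Second, directedness of $\mathcal{S}$ unwinds into a statement about joins: $\mathcal{S}$ is directed if and only if for all proper $N_{1},N_{2}\subset M$ there is a proper $N_{3}\subset M$ with $\chi (M/N_{1})\vee \chi (M/N_{2})\leq \chi (M/N_{3})$, which by Lemma \ref{rel_xi_chi} applied with $\tau =\chi (M/N_{1})\vee \chi (M/N_{2})$ is in turn equivalent to $\xi (M)\not\leq \chi (M/N_{1})\vee \chi (M/N_{2})$ for all proper $N_{1},N_{2}$.

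For the forward implication, assume $\xi (M)$ is hollow and fix proper $N_{1},N_{2}$. By the first observation $\xi (M)\not\leq \chi (M/N_{1})$ and $\xi (M)\not\leq \chi (M/N_{2})$; the contrapositive of the displayed strong irreducibility condition then yields $\xi (M)\not\leq \chi (M/N_{1})\vee \chi (M/N_{2})$, so $\mathcal{S}$ is directed by the second observation. Conversely, assume $\mathcal{S}$ is directed and suppose $\xi (M)\leq \tau \vee \sigma $ with $\xi (M)\not\leq \tau $ and $\xi (M)\not\leq \sigma $. Lemma \ref{rel_xi_chi} gives proper $N_{1},N_{2}$ with $\tau \leq \chi (M/N_{1})$ and $\sigma \leq \chi (M/N_{2})$, whence $\tau \vee \sigma \leq \chi (M/N_{1})\vee \chi (M/N_{2})\leq \chi (M/N_{3})$ for some proper $N_{3}$ by directedness; applying Lemma \ref{rel_xi_chi} once more forces $\xi (M)\not\leq \tau \vee \sigma $, a contradiction. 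Hence $\xi (M)$ is strongly irreducible in $R-\mathrm{Tors}^{\circ }$, i.e. hollow.

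The argument is essentially bookkeeping once the right reformulations are in place; the only genuine ingredients are the reduction from hollow to strongly irreducible in the dual, which rests on the distributivity of $R-\mathrm{Tors}$, and the repeated translation between order relations and the $\chi (M/N)$ afforded by Lemma \ref{rel_xi_chi}. The step most likely to require care is the correct handling of the `directed' hypothesis, namely recognizing that directedness is exactly the assertion that every pairwise join $\chi (M/N_{1})\vee \chi (M/N_{2})$ is again dominated by some $\chi (M/N_{3})$, together with the trivial but indispensable fact that $\xi (M)\not\leq \chi (M/N)$ for all proper $N\subset M$.
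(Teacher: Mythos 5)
Your proof is correct and follows essentially the same route as the paper: both directions are handled by translating between $\xi(M)\not\leq\tau$ and domination by some $\chi(M/N)$ via Lemma \ref{rel_xi_chi}, exactly as in the paper's argument. The only (welcome) difference is that you make explicit the reduction from ``hollow = irreducible in $R\text{-}\mathrm{Tors}^{\circ}$'' to strong irreducibility in the dual via distributivity, a step the paper covers by its opening remark in that section that irreducible and strongly irreducible elements coincide in the distributive lattice $R\text{-}\mathrm{Tors}$.
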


\begin{proof}
Suppose that $\xi (M)$ is irreducible in $R-\mathrm{Tors}^{\circ }$ and let $%
N,L\subset M$. Since $\xi (M)\not\leq \chi (M/N),\chi (M/L)$ and $\xi (M)$
irreducible, also $\xi (M)\not\leq \tau =\chi (M/L)\vee \chi (M/N)$. By
Lemma \ref{rel_xi_chi} there exists $K\subset M$ such that $\chi (M/L)\vee
\chi (M/N)\leq \chi (M/K)$, \emph{i.e.}, the indicated set is directed.

On the other hand, assume that $\{\chi (M/N)\mid N\subset M\}$ is directed.
If $\xi (M)\not\leq \tau ,\sigma $ for some $\tau ,\sigma \in R-\mathrm{Tors}
$, then by Lemma \ref{rel_xi_chi} there exist $N,L\subset M$ with $\tau \leq
\chi (M/N)$ and $\sigma \leq \chi (M/L)$. By hypothesis there exists $%
K\subset M$ with 
\begin{equation*}
\tau \vee \sigma \leq \chi (M/N)\vee \chi (M/L)\leq \chi (M/K).
\end{equation*}%
By Lemma \ref{rel_xi_chi}, $\xi (M)\not\leq \chi (M(K)$ and therefore $\xi
(M)\not\leq \tau \vee \sigma $.
\end{proof}

We will characterize completely irreducible elements in $R-\mathrm{Tors}%
^{\circ }$. Note that any torsion theory $\tau \in R-\mathrm{Tors}$ is the
join of torsion theories of the form $\xi (R/I)$ with $R/I$ being $\tau $%
-torsion, \emph{i.e.} 
\begin{equation*}
\tau =\bigvee \{\xi (R/I)\mid R/I\mbox{ is $\tau$-torsion }\}.
\end{equation*}%
Hence, any torsion theory $\tau $ that is completely irreducible in $R-%
\mathrm{Tors}^{\circ }$ is of the form $\tau =\xi (R/I)$ for some left ideal 
$I$ of $R$. A module $M$ such that $M$ is either $\tau $-torsion or $\tau $%
-torsionfree for any $\tau \in R-\mathrm{Tors}$ is called \emph{decisive}.

\begin{theorem}
An element $\tau \in R-\mathrm{Tors}$ is completely hollow in $R-\mathrm{Tors%
}$ if and only if $\tau=\xi(M)$ for a (cyclic) decisive module $M$.
\end{theorem}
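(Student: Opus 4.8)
The plan is to translate the phrase \emph{completely hollow} into a purely order‑theoretic condition and then treat the two implications separately. Since $\tau$ is completely hollow exactly when it is completely irreducible in $\RT^{\circ}$, unwinding the definition (reversing order and reading meets in $\RT^{\circ}$ as joins in $\RT$) shows that $\tau$ is completely hollow if and only if: whenever $\tau\leq\bigvee A$ for a set $A$ of elements $\leq\tau$, one has $\tau\in A$. I would first record the usable reformulation that this holds if and only if the set $\{\sigma\in\RT\mid\sigma<\tau\}$ has a greatest element $\rho$. Indeed, applying the condition to $A=\{\sigma\mid\sigma<\tau\}$ shows $\bigvee A\neq\tau$, so $\rho:=\bigvee A<\tau$ is this greatest element; conversely, if such a $\rho$ exists, any join of elements strictly below $\tau$ stays $\leq\rho<\tau$. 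In particular a completely hollow $\tau$ satisfies $\tau\neq\bzero$.

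For the implication ($\Leftarrow$), suppose $M\neq0$ is decisive and set $\tau=\xi(M)$. Decisiveness says $\sigma\geq\xi(M)$ or $\sigma\leq\chi(M)$ for every $\sigma$, and $M\neq0$ forces $\xi(M)\not\leq\chi(M)$ (otherwise $M$ would lie in $\mathbb{T}_{\xi(M)}\cap\mathbb{F}_{\xi(M)}=\{0\}$). Now take $A$ with each $a\leq\xi(M)$ and $\xi(M)\leq\bigvee A$, so $\bigvee A=\xi(M)$. If $\xi(M)\notin A$, then every $a\in A$ has $a<\xi(M)$, hence $a\not\geq\xi(M)$, hence $a\leq\chi(M)$ by decisiveness; joining gives $\xi(M)=\bigvee A\leq\chi(M)$, a contradiction. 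Thus $\xi(M)\in A$ and $\xi(M)$ is completely hollow.

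For the harder implication ($\Rightarrow$), assume $\tau$ is completely hollow with greatest element $\rho<\tau$ below it. The discussion preceding the theorem (using $\tau=\bigvee\{\xi(R/I)\mid R/I\text{ is }\tau\text{-torsion}\}$ together with complete irreducibility in $\RT^{\circ}$) yields $\tau=\xi(R/I_{0})$ for some left ideal $I_{0}$. Since $\xi(R/I_{0})=\tau\not\leq\rho$, the cyclic module $R/I_{0}$ is not $\rho$-torsion, so $C:=(R/I_{0})/\rho(R/I_{0})$ is a nonzero cyclic $\rho$-torsionfree module, and I claim it is decisive with $\xi(C)=\tau$. As $C$ is a quotient of the $\tau$-torsion module $R/I_{0}$ it is $\tau$-torsion, giving $\xi(C)\leq\tau$; and $\xi(C)<\tau$ would force $\xi(C)\leq\rho$, making the nonzero module $C$ both $\rho$-torsion and $\rho$-torsionfree, which is impossible, so $\xi(C)=\tau$. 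To see decisiveness, fix $\sigma$. If $\sigma\geq\tau$ then $C$ is $\tau$-torsion, hence $\sigma$-torsion. If $\sigma\not\geq\tau$, then $\sigma\wedge\tau<\tau$, so $\sigma\wedge\tau\leq\rho$; the submodule $\sigma(C)$ is $\sigma$-torsion and, being a submodule of the $\tau$-torsion module $C$, also $\tau$-torsion, whence $\sigma(C)\in\mathbb{T}_{\sigma}\cap\mathbb{T}_{\tau}=\mathbb{T}_{\sigma\wedge\tau}\subseteq\mathbb{T}_{\rho}$. Since $\sigma(C)\subseteq C$ is $\rho$-torsionfree, $\sigma(C)=0$ and $C$ is $\sigma$-torsionfree. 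Thus $C$ is the required cyclic decisive module.

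I expect the main obstacle to be the verification of decisiveness in the forward direction. The two ideas that make it work are, first, replacing the cyclic witness $R/I_{0}$ by its $\rho$-torsionfree quotient $C$ (which, crucially, keeps $\xi(C)=\tau$ precisely because $\rho$ is the greatest element below $\tau$), and second, exploiting that meets in $\RT$ are computed as intersections of torsion classes, so that $\sigma(C)$ becomes a $(\sigma\wedge\tau)$‑torsion and hence $\rho$‑torsion submodule of a $\rho$‑torsionfree module, forcing it to vanish. The preliminary translation of \emph{completely hollow} into the existence of a greatest lower element $\rho$ is what drives both implications.
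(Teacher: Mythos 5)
Your proposal is correct, but it takes a genuinely different route from the paper's. The paper sets $\tau^{\Delta}=\bigvee\{\sigma\in\RT\mid\tau\not\leq\sigma\}$ and deduces $\tau\not\leq\tau^{\Delta}$ from complete hollowness; since that join runs over elements that need not lie below $\tau$, this step tacitly uses the \emph{unrestricted} (strong) form of complete irreducibility in $\RT^{\circ}$, which agrees with the definitional restricted form only because $\RT$ is a frame. The paper then extracts a nonzero cyclic $\tau$-torsion, $\tau^{\Delta}$-torsionfree module $M$ from $\mathbb{F}_{\tau}\not\supseteq\mathbb{F}_{\tau^{\Delta}}$, and decisiveness is immediate because every $\sigma$ with $\tau\not\leq\sigma$ lies below $\tau^{\Delta}$. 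You instead stay entirely within the definitional form: you characterize complete hollowness by the existence of a greatest element $\rho<\tau$, take the cyclic witness $R/I_{0}$ with $\xi(R/I_{0})=\tau$ from the remark preceding the theorem, pass to its $\rho$-torsionfree quotient $C$, and prove decisiveness by pushing $\sigma(C)$ into $\mathbb{T}_{\sigma}\cap\mathbb{T}_{\tau}=\mathbb{T}_{\sigma\wedge\tau}\subseteq\mathbb{T}_{\rho}$; this is where you need heredity of torsion classes and meets-as-intersections, which the paper's larger element $\tau^{\Delta}$ lets it avoid. The trade-off: your argument is more self-contained (it never invokes, explicitly or implicitly, the frame law or the coincidence of completely hollow with completely strongly hollow), at the cost of a slightly more involved decisiveness argument; the paper's is shorter once that coincidence is granted. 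You are also more careful about the degenerate case, noting explicitly that completely hollow forces $\tau\neq\bzero$ and restricting to $M\neq 0$, a point the paper's proof leaves implicit. Both $(\Leftarrow)$ directions rest on the same decisiveness-plus-join computation, yours proving the restricted property and the paper's the unrestricted one.
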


\begin{proof}
Suppose $\tau $ is completely irreducible in $R-\mathrm{Tors}^{\circ }$. Let 
\begin{equation*}
\tau ^{\Delta }=\bigvee \{\sigma \in R-\mathrm{Tors}\mid \tau \not\leq
\sigma \}.
\end{equation*}%
Then $\tau \not\leq \tau ^{\Delta }$ as $\tau $ is completely irreducible in 
$R-\mathrm{Tors}^{\circ }$. In particular $\mathbb{F}_{\tau }\not\supseteq 
\mathbb{F}_{\tau ^{\Delta }}$. Hence there exists a non-zero cyclic $\tau $%
-torsion module $M$ that is $\tau ^{\Delta }$-torsionfree. Hence $\xi
(M)\leq \tau $ and if $\tau \not\leq \xi (M)$ one had $\xi (M)\leq \tau
^{\Delta }$, \emph{i.e.} $M\in \mathbb{T}_{\tau ^{\Delta }}\cap \mathbb{F}%
_{\tau ^{\Delta }}=\{0\}$. Thus $\tau \leq \xi (M)$, \emph{i.e.} $\tau =\xi
(M)$. Moreover for any $\sigma \in R-\mathrm{Tors}$: if $\tau \leq \sigma $,
then $M$ is $\sigma $-torsion. Otherwise $\tau \not\leq \sigma $ and $\sigma
\leq \tau ^{\Delta }$, which shows that $M$ is $\sigma $-torsionfree, as $M$
is $\tau ^{\Delta }$-torsionfree. Hence, $M$ is decisive.

On the other hand, if $\tau $ is of the given form and $U$ is any subset of $%
R-\mathrm{Tors}$, such that $\tau \not\leq \sigma $ for all $\sigma \in U$,
then $M\in \mathbb{F}_{\sigma }$ for all $\sigma \in U$ and $M\in
\bigcap_{\sigma \in U}\mathbb{F}_{\sigma }=\mathbb{F}_{\bigvee U}$, \emph{%
i.e.} $\tau =\xi (M)\not\leq \bigvee U$.
\end{proof}

Examples of decisive modules are strongly prime modules in the sense of \cite%
{HandelmanLawrence} which also define Rosenberg's left spectrum (see \cite%
{Rosenberg}).

\begin{bibdiv}
 \begin{biblist}

\bib{Abuhlail_Zariski2}{article}{
   author={Abuhlail, Jawad},
   title={Zariski Topologies for Coprime and Second Submodules},
   journal={Algebra Colloquium},
   date={2012},
   status={to appear}
}

\bib{Abuhlail_Zariski1}{article}{
   author={Abuhlail, Jawad},
   title={A Zariski topology for modules},
   journal={Comm. Algebra},
   volume={39},
   date={2011},
   number={11},
   pages={4163--4182},
   issn={0092-7872},
   review={\MR{2855119}},
   doi={10.1080/00927872.2010.519748},
}
\bib{Abuhlail_DualZariski}{article}{
   author={Abuhlail, Jawad},
   title={A dual Zariski topology for modules},
   journal={Topology Appl.},
   volume={158},
   date={2011},
   number={3},
   pages={457--467},
   issn={0166-8641},
   review={\MR{2754369 (2012e:16002)}},
   doi={10.1016/j.topol.2010.11.021},
}

\bib{AlbuSmith}{article}{
   author={Albu, Toma},
   author={Smith, Patrick F.},
   title={Primality, irreducibility, and complete irreducibility in modules
   over commutative rings},
   journal={Rev. Roumaine Math. Pures Appl.},
   volume={54},
   date={2009},
   number={4},
   pages={275--286},
   issn={0035-3965},
   review={\MR{2568890 (2010m:13012)}},
}

\bib{Atani}{article}{
   author={Ebrahimi Atani, Shahabaddin},
   title={Strongly irreducible submodules},
   journal={Bull. Korean Math. Soc.},
   volume={42},
   date={2005},
   number={1},
   pages={121--131},
   issn={1015-8634},
   review={\MR{2122772 (2005k:13019)}},
   doi={10.4134/BKMS.2005.42.1.121},
}

\bib{Azizi}{article}{
   author={Azizi, A.},
   title={Strongly irreducible ideals},
   journal={J. Aust. Math. Soc.},
   volume={84},
   date={2008},
   number={2},
   pages={145--154},
   issn={1446-7887},
   review={\MR{2437333 (2009f:13002)}},
   doi={10.1017/S1446788708000062},
}

\bib{Bican}{article}{
   author={Bican, L.},
   author={Jambor, P.},
   author={Kepka, T.},
   author={N{\v{e}}mec, P.},
   title={Prime and coprime modules},
   journal={Fund. Math.},
   volume={107},
   date={1980},
   number={1},
   pages={33--45},
   issn={0016-2736},
   review={\MR{584657 (81k:16026)}},
}

\bib{BuyukasikDemirci}{article}{
   author={B{\"u}y{\"u}ka{\c{s}}{\i}k, Engin},
   author={Demirci, Y{\i}lmaz M.},
   title={Weakly distributive modules. Applications to supplement
   submodules},
   journal={Proc. Indian Acad. Sci. Math. Sci.},
   volume={120},
   date={2010},
   number={5},
   pages={525--534},
   issn={0253-4142},
   review={\MR{2779386 (2012c:16004)}},
   doi={10.1007/s12044-010-0053-9},
}
\bib{FuchsHeinzerOlberding}{article}{
   author={Fuchs, Laszlo},
   author={Heinzer, William},
   author={Olberding, Bruce},
   title={Commutative ideal theory without finiteness conditions: primal
   ideals},
   journal={Trans. Amer. Math. Soc.},
   volume={357},
   date={2005},
   number={7},
   pages={2771--2798},
   issn={0002-9947},
   review={\MR{2139527 (2006b:13006)}},
   doi={10.1090/S0002-9947-04-03583-4},
}

\bib{GanesanVanaja}{article}{
   author={Ganesan, Lalitha},
   author={Vanaja, N.},
   title={Modules for which every submodule has a unique coclosure},
   journal={Comm. Algebra},
   volume={30},
   date={2002},
   number={5},
   pages={2355--2377},
   issn={0092-7872},
   review={\MR{1904642 (2003c:16004)}},
   doi={10.1081/AGB-120003473},
}

\bib{Golan}{book}{
   author={Golan, Jonathan S.},
   title={Torsion theories},
   series={Pitman Monographs and Surveys in Pure and Applied Mathematics},
   volume={29},
   publisher={Longman Scientific \& Technical},
   place={Harlow},
   date={1986},
   pages={xviii+651},
   isbn={0-582-99808-5},
   review={\MR{880019 (88c:16034)}},
}

\bib{Gratzer}{book}{
   author={Gr{\"a}tzer, George},
   title={Lattice theory: foundation},
   publisher={Birkh\"auser/Springer Basel AG, Basel},
   date={2011},
   pages={xxx+613},
   isbn={978-3-0348-0017-4},
   review={\MR{2768581 (2012f:06001)}},
   doi={10.1007/978-3-0348-0018-1},
}

\bib{GrzeszczukOkninskiPuczylowski}{article}{
   author={Grzeszczuk, Piotr},
   author={Okni{\'n}ski, Jan},
   author={Puczy{\l}owski, Edmund R.},
   title={Relations between some dimensions of modular lattices},
   journal={Comm. Algebra},
   volume={17},
   date={1989},
   number={7},
   pages={1723--1737},
   issn={0092-7872},
   review={\MR{1006522 (90j:06015)}},
   doi={10.1080/00927878908823816},
}

\bib{HandelmanLawrence}{article}{
   author={Handelman, David},
   author={Lawrence, John},
   title={Strongly prime rings},
   journal={Trans. Amer. Math. Soc.},
   volume={211},
   date={1975},
   pages={209--223},
   issn={0002-9947},
   review={\MR{0387332 (52 \#8175)}},
}
\bib{HeinzerRatliffRush}{article}{
   author={Heinzer, William J.},
   author={Ratliff, Louis J., Jr.},
   author={Rush, David E.},
   title={Strongly irreducible ideals of a commutative ring},
   journal={J. Pure Appl. Algebra},
   volume={166},
   date={2002},
   number={3},
   pages={267--275},
   issn={0022-4049},
   review={\MR{1870620 (2002k:13002)}},
   doi={10.1016/S0022-4049(01)00043-3},
}
\bib{HeinzerOlberding}{article}{
   author={Heinzer, William},
   author={Olberding, Bruce},
   title={Unique irredundant intersections of completely irreducible ideals},
   journal={J. Algebra},
   volume={287},
   date={2005},
   number={2},
   pages={432--448},
   issn={0021-8693},
   review={\MR{2134153 (2006a:13004)}},
   doi={10.1016/j.jalgebra.2005.03.001},
}

\bib{Jondrup}{article}{
   author={J{\o}ndrup, S.},
   title={Projective modules},
   journal={Proc. Amer. Math. Soc.},
   volume={59},
   date={1976},
   number={2},
   pages={217--221},
   issn={0002-9939},
   review={\MR{0419525 (54 \#7546)}},
}

\bib{KhaksariErshadSharif}{article}{
   author={Khaksari, A.},
   author={Ershad, M.},
   author={Sharif, H.},
   title={Strongly irreducible submodules of modules},
   journal={Acta Math. Sin. (Engl. Ser.)},
   volume={22},
   date={2006},
   number={4},
   pages={1189--1196},
   issn={1439-8516},
   review={\MR{2245251 (2007e:13016)}},
   doi={10.1007/s10114-005-0681-7},
}

\bib{Lazard}{article}{
   author={Lazard, Daniel},
   title={Libert\'e des gros modules projectifs},
   language={French},
   journal={J. Algebra},
   volume={31},
   date={1974},
   pages={437--451},
   issn={0021-8693},
   review={\MR{0349757 (50 \#2250)}},
}

\bib{Lomp}{article}{
   author={Lomp, Christian},
   title={Prime elements in partially ordered groupoids applied to modules
   and Hopf algebra actions},
   journal={J. Algebra Appl.},
   volume={4},
   date={2005},
   number={1},
   pages={77--97},
   issn={0219-4988},
   review={\MR{2130464 (2005m:16001)}},
   doi={10.1142/S0219498805001022},
}

\bib{LompSantana}{article}{
   author={Lomp, Christian},
   author={Sant'Ana, Alveri},
   title={Comparability, distributivity and non-commutative $\phi$-rings},
   conference={
      title={Groups, rings and group rings},
   },
   book={
      series={Contemp. Math.},
      volume={499},
      publisher={Amer. Math. Soc.},
      place={Providence, RI},
   },
   date={2009},
   pages={205--217},
   review={\MR{2581939 (2011d:16023)}},
   doi={10.1090/conm/499/09804},
}
		
\bib{Markowsky}{article}{
   author={Markowsky, George},
   title={Primes, irreducibles and extremal lattices},
   journal={Order},
   volume={9},
   date={1992},
   number={3},
   pages={265--290},
   issn={0167-8094},
   review={\MR{1211380 (94a:06015)}},
   doi={10.1007/BF00383950},
}
\bib{Rosenberg}{article}{
   author={Rosenberg, Alexander L.},
   title={The left spectrum, the Levitzki radical, and noncommutative
   schemes},
   journal={Proc. Nat. Acad. Sci. U.S.A.},
   volume={87},
   date={1990},
   number={21},
   pages={8583--8586},
   issn={0027-8424},
   review={\MR{1076775 (92d:14001)}},
   doi={10.1073/pnas.87.21.8583},
}

\bib{MR0349757}{article}{
   author={Lazard, Daniel},
   title={Libert\'e des gros modules projectifs},
   language={French},
   journal={J. Algebra},
   volume={31},
   date={1974},
   pages={437--451},
   issn={0021-8693},
   review={\MR{0349757 (50 \#2250)}}
}

\bib{rangaswamy}{article}{ 
	author={K.M.Rangaswamy}, 
	title={Modules with finite spanning dimension},
	journal={Canad. Math. Bull.},
	volume={20}, 
	pages={255-262},
	date={1977},
}

\bib{Simmons}{article}{
   author={Simmons, Harold},
   title={Torsion theoretic points and spaces},
   journal={Proc. Roy. Soc. Edinburgh Sect. A},
   volume={96},
   date={1984},
   number={3-4},
   pages={345--361},
   issn={0308-2105},
   review={\MR{760783 (86d:16037)}},
   doi={10.1017/S0308210500025476},
}
\bib{Stephenson}{article}{
   author={Stephenson, W.},
   title={Modules whose lattice of submodules is distributive},
   journal={Proc. London Math. Soc. (3)},
   volume={28},
   date={1974},
   pages={291--310},
   issn={0024-6115},
   review={\MR{0338082 (49 \#2849)}},
}
\bib{Wisbauer}{book}{
   author={Wisbauer, Robert},
   title={Foundations of module and ring theory},
   series={Algebra, Logic and Applications},
   volume={3},
   edition={Revised and translated from the 1988 German edition},
   note={A handbook for study and research},
   publisher={Gordon and Breach Science Publishers},
   place={Philadelphia, PA},
   date={1991},
   pages={xii+606},
   isbn={2-88124-805-5},
   review={\MR{1144522 (92i:16001)}},
}
\bib{Yassemi}{article}{
   author={Yassemi, Siamak},
   title={The dual notion of prime submodules},
   journal={Arch. Math. (Brno)},
   volume={37},
   date={2001},
   number={4},
   pages={273--278},
   issn={0044-8753},
   review={\MR{1879449}},
}
\bib{Zoschinger}{article}{
   author={Z{\"o}schinger, Helmut},
   title={Projektive Moduln mit endlich erzeugtem Radikalfaktormodul},
   language={German},
   journal={Math. Ann.},
   volume={255},
   date={1981},
   number={2},
   pages={199--206},
   issn={0025-5831},
   review={\MR{614396 (82e:16013)}},
   doi={10.1007/BF01450670},
}

 \end{biblist}

\end{bibdiv}


\end{document}